\newtheorem{theorem}{Theorem}[section]
\newtheorem{proposition}{Proposition}[section]
\newtheorem{lemma}{Lemma}[section]
\newtheorem{corollary}{Corollary}[section]
\newtheorem{remark}{Remark}[section]
\newtheorem{fact}{Fact} 
\newtheorem{setting}{Setting} 
\newenvironment{mytabular}{\bgroup\tiny\tabular}{\endtabular\egroup}
          \newcommand{\st}[1]{\ensuremath{ {\rm st}\left(   #1\right)  }}
          \newcommand{\orth}[1]{\ensuremath{ {\rm orth}\left(   #1\right)  }}
\newcommand{\bigxiaokuohao}[1]{\ensuremath{ \left(  #1 \right) }}      
\newcommand{\bigjueduizhi}[1]{\ensuremath{ \left|  #1 \right| }}   
\newcommand{\bigdakuohao}[1]{\ensuremath{ \left\{  #1 \right\} }}         
\newcommand{\bigzhongkuohao}[1]{\ensuremath{ \left[   #1 \right] }}      
\newcommand{\xiaokuohao}[1]{\ensuremath{  (  #1  ) }}      
\newcommand{\bigonenorm}[1]{\ensuremath{ \left\|   #1 \right\|_1 }}    
\newcommand{\bigfnorm}[1]{\ensuremath{ \left\|   #1 \right\|_F }}    
\newcommand{\bignorm}[1]{\ensuremath{ \left\|   #1 \right\|  }}
\newcommand{\bigfnormsquare}[1]{\ensuremath{ \left\|   #1 \right\|_F^2 }}    
\newcommand{\biginfnorm}[1]{\ensuremath{ \left\|   #1 \right\|_{\infty} }}
\newcommand{\innerprod}[2]{\ensuremath{ \left\langle   #1 , #2\right\rangle }}      
\newcommand{\innerprodnotleftright}[2]{\ensuremath{  \langle   #1 , #2 \rangle }}
               \newcommand{\mbr}[1]{\ensuremath{\mathbb R^{#1}   }} 
             \newcommand{\SM}[1]{\ensuremath{\mathbb S^{#1}   }}    
                \newcommand{\PSD}[1]{\ensuremath{\mathbb S^{#1}_+   }}     
                \newcommand{\PSDD}[1]{\ensuremath{\mathbb S^{#1}_{++}   }}
                \newcommand{\rank}[1]{\ensuremath{ {\rm rank}(#1)   }}
\newcommand{\dataMat}{\ensuremath{     X } }  
\newcommand{\projMat}{\ensuremath{     U } }   
\newcommand{\dataMatTrans}{\ensuremath{     X^{\top} } }  
\newcommand{\subgradMat}{\ensuremath{     S } }   
\newcommand{\subGMat}{\ensuremath{     S } }  
	\definecolor{darkgray}{rgb}{0.66, 0.66, 0.66}
\title{On   Finite-Step Convergence of the Non-Greedy   Algorithm and Proximal Alternating Minimization Method with Extrapolation  for $L_1$-Norm PCA
}
\author{Yuning Yang\thanks{College of Mathematics and Information Science, Guangxi University, Nanning, 530004, China  (yyang@gxu.edu.cn).} \thanks{Center for Applied Mathematics of Guangxi, Guangxi University, Nanning, 530004, China.}                             
}
\begin{document} 
\maketitle

\begin{abstract}
	The classical non-greedy algorithm (NGA) \cite{nie2011robust}      and the recently proposed  proximal  alternating minimization method with extrapolation (PAMe)  \cite{wang2021linear}  for $L_1$-norm PCA are revisited and their finite-step convergence      are studied. 
	It is first shown that NGA can be interpreted as a conditional subgradient   or an alternating maximization method. By recognizing it as a conditional subgradient,  we prove that the iterative points generated by the algorithm will be constant in finitely many steps under a certain full-rank assumption; such an assumption can be removed when the projection dimension is one. 
	By treating the algorithm as an alternating maximization, we then prove that  the objective value   will be fixed after at most $\left\lceil   \frac{F^{\max}}{\tau_0} \right\rceil$ steps,  where the stopping point   satisfies certain optimality conditions.
	Then, a  slight modification of NGA with improved convergence properties is analyzed. It is shown that  the iterative points generated by the modified algorithm will not change after at most  $\left\lceil   \frac{2F^{\max}}{\tau} \right\rceil$ steps;  furthermore,    the stopping point   satisfies certain optimality conditions if the proximal parameter $\tau$ is small enough. 
	
	For PAMe,  it is proved that the sign variable will remain constant after finitely many steps  and the algorithm can output a point satisfying certain optimality condition, if the parameters are small enough and a full rank assumption is satisfied. Moreover, if there is no proximal term on the projection matrix related subproblem, then the iterative points generated by this modified algorithm will not change after at most $\left\lceil   \frac{4F^{\max}}{\tau(1-\gamma)} \right\rceil$ steps and the stopping point also satisfies certain optimality conditions, provided similar assumptions as those for PAMe. The full rank assumption can be removed when the projection dimension is one.

	\noindent {\bf Keywords:} $L_1$-norm PCA; conditional gradient; alternating maximization; polar decomposition; finite-step convergence 
\end{abstract}

\section{Introduction}
In   the big data era, to deal with   data in high dimensional space, a commonly used preprocess tool is dimension reduction.    It is well known that Principal Component Analysis (PCA) is one of the most popular techniques for dimension reduction. However, a main drawback of the traditional PCA is its non-robustness to outliers,  due to that it is essentially a least-square loss ($L_2$-norm) based model, making it not effective in the presence of non-Guassian noise. 

To alleviate the   drawback of PCA, several new models have been proposed and studied, one of which in recent years is the $L_1$-norm PCA; see, e.g, \cite{kwak2008principal,nie2011robust,markopoulos2017efficient,kamrani2020reduced,wang2021linear} and the references therein. Roughly speacking, PCA maximizes the variance (in the least-square criterion) of the projection of the data points onto a new latent axis system, while $L_1$-norm PCA replaces the least-square criterion by the least obsolute one ($L_1$-norm) . Compared with  PCA,  the least absolute loss  employed by $L_1$-norm PCA   is less sensitive to heavy-tailed noise or outliers.  Besides $L_1$-norm PCA, another popular approach of robustifying PCA is to minimize the absolute subspace representation error; see, e.g,  \cite{ke2005robust,eriksson2010efficient,yu2012anefficient,tsagkarakis2016l1}; this approach will not be studied in this work.   Although $L_1$-norm PCA is more robust than  PCA, 
unlike PCA which can be solved by singular value decomposition (SVD), $L_1$-norm PCA does not admit a closed-form solution. On the other hand, finding an optimal solution to $L_1$-norm PCA is NP-hard in general \cite{mccoy2011two,markopoulos2014optimal}. 

Given the above understanding, several works have devoted their efforts to designing algorithms to resolve $L_1$-norm PCA. Early works used heuristic algorithms \cite{croux2007algorithms,choulakian2006l1norm} that have no   theoretical guarantee. \cite{kwak2008principal} first proposed an iterative algorithm when the projection dimension   is one (in what follows, we use $K$ to denote this dimension), and then use a greedy method to find the projection matrix for the $K> 1$ cases.   \cite{nie2011robust} proposed a non-greedy algorithm for the $K\geq 1$ cases that can simultaneously update each column of the (partial) projection matrix.  When $K=1$, it reduces to the iterative algorithm in \cite{kwak2008principal}.    \cite{nie2011robust} studied certain convergence properties of the non-greedy algorithm, while the convergence results are  limited. More precisely, only the monotonically increasing property of the objective value generated by the algorithm was strictly proved.  It will be shown later that the non-greedy algorithm of \cite{nie2011robust} can be equivalently written as a fixed-point iteration, which can be further understood as a conditional (sub)gradient or an alternating maximization method.
After the work of \cite{nie2011robust},   advanced methods have been proposed \cite{markopoulos2017efficient,markopoulos2014optimal,kim2020asimple,wang2021linear,wang2019globally}; just to name a few. An efficient    algorithm was proposed in \cite{markopoulos2017efficient,kundu2014fast} using bit-flipping; moreover, the algorithm was proved to stop in finitely many steps.   \cite{kim2020asimple} considered the $L_1$ kernel PCA model with $K=1$   and showed that the proposed algorithm converges in finitely many steps linearly. Very recently, \cite{wang2021linear} designed an alternating minimization method, called PAMe, which can be regarded as a proximal and extrapolated improvement of NGA. By showing that the Kurdyka-\L{}ojasiewicz exponent of the problem is $1/2$, the authors were able to prove that PAMe converges globally and linearly, and the output is   a critical point if a certain parameter condition is met. It was observed in \cite{wang2021linear} that PAMe is more efficient than NGA and some other algorithms.  Non-iterative approaches were studied in \cite{markopoulos2014optimal,mccoy2011two}, where \cite{mccoy2011two} considered polynomial-time approximation algorithms for the $K=1$ case, while the approach developed in \cite{markopoulos2014optimal} can find the global optimizer, which runs in polynomial-time if  the sample dimension is   fixed. 

With the above     advanced approaches, however, the early NGA of  \cite{nie2011robust} (and also the $K=1$ case   \cite{kwak2008principal})  is still valuable, as its idea  has been inherited by  methods for solving various   PCAs  such as  the $L_1$-norm tensor analysis \cite{markopoulos2018l1norm,chachlakis2020l1} and    $L_{21}$-norm PCA \cite{nie2021nongreedy}. However, as has been mentioned,  its convergence behavior is far from being studied. Therefore, this work intends to study the finite-step convergence of NGA and its improvement  PAMe. Specifically, the following results are obtained:

1.    NGA is first interpreted as a conditional subgradient and   its subsequential convergence is proved (Proposition \ref{prop:subsequential_conv}). Under a certain full rank assumption, the iterative points will be fixed within finitely many steps (Theorem \ref{th:global_conv}). The stopping point is a FOC point. The full rank   assumption has been removed   if $K=1$ (Theorem \ref{th:global_conv_K_1}). 
To   reduce the assumption, 
the algorithm  is then treated as an alternating maximization method. It is shown that after at most $\left\lceil   \frac{F^{\max}}{\tau_0} \right\rceil$ steps, the objective value will be fixed, and the algorithm finds   a FOC point  (Theorem \ref{th:finite_conv_original_alg_fixed_point_iteration}).    Here $F^{\max}$ denotes the global maximum of $L_1$-PCA, and $\tau_0$ will be specified in the related part.

2. To  further improve the convergence results   while avoiding assumptions,  we consider imposing a proximal term on the sign variable related subproblem (called $\subGMat$-PNGA in this work).   It is shown that after at most $\left\lceil   \frac{2F^{\max}}{\tau} \right\rceil$ steps, the iterative points of $\subGMat$-PNGA will not change anymore, and the stopping point is a FOC point provided a small enough parameter $\tau$ (Theorem \ref{th:finite_step_conv}) related to the proximal term. The reason    why    the proximal term is only imposed on the sign variable related subproblem  is also discussed.  

3. For PAMe,  the extrapolated parameter is small enough, then the sign variable will not change after finitely many steps; if in addition, the parameters related to the proximal terms are also small enough and a full rank assumption is met, then PAMe can output a FOC point in finitely many steps (Theorem \ref{th:finite_conv_pame}).  Furthermore, we show that, 
if the proximal term is only imposed to the sign variable (called $\subGMat$-PAMe in this work) and similar assumptions as those of PAMe hold, then after at most $\left\lceil   \frac{4F^{\max}}{\tau(1-\gamma)} \right\rceil$ steps, the iterative points generated by $\subGMat$-PAMe will not change anymore, and the stopping point is a FOC point (Theorem \ref{th:finite_conv_pame0}).   When $K=1$, the full rank assumption can   be removed  (Theorems \ref{th:finite_conv_pame_K1} and \ref{col:finite_conv_pame_K_1}).   

The  convergence results above are summarized in Table \ref{tab:convergence_results}. 
\begin{table*}[htbp]
	\renewcommand\arraystretch{1.5}
	\centering
	\caption{\label{tab:convergence_results}Finite-step convergence results obtained in this work. }
	{	\begin{mytabular}{ccccc}
			\toprule
			Algorithm		&   Proj. Dim.   &  Convergence results & Theorem   & Assumptions   \\
			\midrule
			\multirow{4}{*}{NGA \eqref{alg:fixed_point_iteration_l1_pca}}  &  $K\geq 1$  &  {Finite-step convergence of the iterative points; FOC point }  & Thm. \ref{th:global_conv} &  Full column rank of $\dataMat\subGMat^k$      \\
			\cdashline{2-5}[1pt/1pt]
			& $K=1$ &  The same as above & Thm. \ref{th:global_conv_K_1}     &  N/A \\
			\cmidrule{2-5}
			& $K\geq 1$ &  \makecell{Finite-step convergence of the objective value; FOC point\\ Upper bound of steps: $\left\lceil   \frac{F^{\max}}{\tau_0} \right\rceil$ }  &   Thm. \ref{th:finite_conv_original_alg_fixed_point_iteration} &   N/A \\
			\midrule 
			$\subGMat$-PNGA \eqref{alg:am_prox_l1_pca}    & $K\geq 1$  &   \makecell{Finite-step convergence of the iterative points; \\ Upper bound of steps: $\left\lceil   \frac{2F^{\max}}{\tau} \right\rceil$ }   & Thm. \ref{th:finite_step_conv} &  {N/A  (small $\tau$ to find FOC point)}     \\
			\midrule
			\multirow{3}{*}{PAMe \eqref{alg:pame}} &  {$K\geq 1$} &  \makecell{Finite-step convergence of the sign variable}    &  Thm. \ref{th:finite_conv_pame}  &   \makecell{Small $\gamma$  (full column rank of $\dataMat\subGMat^{k^\prime}$ \\and  small $\tau,\beta$ to find FOC point) }       \\
			\cdashline{2-5}[1pt/1pt]
			&   $K=1$& The same as above & Thm. \ref{th:finite_conv_pame_K1} &  Small $\gamma$ (small $\tau,\beta$ to find FOC point)   \\
			\midrule
			\multirow{3}{*}{$\subGMat$-PAMe \eqref{alg:pame_beta0}} &  {$K\geq 1$} &  \makecell{Finite-step convergence of the iterative points; \\ Upper bound of steps: $\left\lceil   \frac{4F^{\max}}{\tau(1-\gamma)} \right\rceil$ }    &  Thm. \ref{th:finite_conv_pame0}  &   \makecell{Full column rank of $\dataMat\subGMat^k$ and \\small $\gamma$ (small $\tau$ to find FOC point) }       \\
			\cdashline{2-5}[1pt/1pt]
			&   $K=1$& The same as above & Thm. \ref{col:finite_conv_pame_K_1} &  small $\gamma$ (small $\tau$ to find FOC point)   \\		
			\bottomrule
		\end{mytabular}%
	}
	\label{tab:effect_tau3}%
\end{table*}%

Note that   \cite{wang2021linear} commented that ``\emph{the convergence rates of the two algorithms remain
	unknown}'', and ``\emph{ the algorithm based on BF iterations is guaranteed to converge in a finite
	number of steps, while that based on FP iterations is not known to possess such a property}''. In the context, ``\emph{the two algorithms}''  and ``\emph{that based on FP iterations}'' mean the algorithms of \cite{kwak2008principal,nie2011robust}.
Therefore, the finite-step convergence results summarized in item 1   give    an affirmative answer   to the   comments above.

Considering the  algorithms with the finite-step convergence property \cite{markopoulos2017efficient,kim2020asimple},  the algorithm of \cite{kim2020asimple} was only designed for the $K=1$ case; although the algorithm in \cite{markopoulos2017efficient} is applicable for either $K=1$ or $K>1$, it was only formally analyzed for the   $K=1$ case that  the stopping point satisfies certain optimality conditions.  On the other side, the obtained theoretical results not only show   that NGA of \cite{nie2011robust} (and the $K=1$ case \cite{kwak2008principal}) and PAMe of \cite{wang2021linear}, which are applicable for $K\geq 1$ cases, find a FOC point in finitely many steps, but we   also provide an explicit upper bound for the number of steps.

Another advantage of this work is that the analysis is elementary, which is essentially based on some simple observations and basic properties of the polar decomposition.

We also remark that   sparse PCA algorithms have also been interpreted as conditional (sub)gradient \cite{luss2013conditional},  and some of our definitions follow those of \cite{luss2013conditional}.  

The remainder is organized as follows. Section \ref{sec:pre} introduces basic definitions of polar decomposition with   properties,  describes the $L_1$-norm PCA model, and presents optimality conditions.  Section \ref{sec:non_greedy_alg_finite_step_conv} considers conditional (sub)gradient and alternating maximization interpretations for NGA,  and studies     finite-step convergence for NGA. Section \ref{sec:am_convergence} considers  a proximal version of NGA and PAMe, and   establishes the finite-step convergence results.    Section \ref{sec:conclusions} draws some conclusions.

\emph{Notation.} Vectors are represented in boldface lowercase $(\mathbf a,\mathbf b,\ldots)$ and matrices correspond to italic  capitals $(A,B,\ldots)$.   
$\langle\cdot,\cdot\rangle$ denotes the inner product of two vectors/matrices; $\|\cdot\| = \sqrt{\langle\cdot,\cdot\rangle }$ denotes the Euclidean   norm for a vector while $\|\cdot\|_F$ the Frobenius norm for a matrix. $\bigonenorm{\cdot}$ means the $L_1$-norm. $\|\cdot\|_2$ means the spectral norm. $\SM{n\times n}$,  $\PSD{n\times n}$, and $\PSDD{n\times n}$ respectively represent the cones of symmetric, symmetric positive semidefinite,  and symmetric positive definite matrices of size $n\times n$.    $(\cdot)^{\top}$ stands for the matrix transposition. $\lambda_{\min}(\cdot)$   denotes the smallest eigenvalue of a symmetric matrix and $\sigma^+_{\min}(\cdot)$ denotes the smallest positive singular value of a matrix.     $\st{m,n}:=\{ \projMat \in\mbr{m\times n} \mid \projMat^\top\projMat = I   \}$ , $m\geq n$ denotes the set of partially orthonormal matrices, i.e., the Stiefel manifold, while $\operatorname{orth}(n):=\{ \projMat\in\mathbb R^{n\times n}\mid \projMat^\top\projMat = \projMat\projMat^\top = I  \}$. 
$A_{ij}$ means the $(i,j)$-th entry of a matrix $A$. 

The sign function $\operatorname{sgn}(\cdot)$   is given as $\operatorname{sgn}(x) = x/|x|$ if $x\neq 0$ and $\operatorname{sgn}(x)=0$ if $x=0$.  $\operatorname{sgn}(\cdot)$ applies to a vector or a matrix entry-wisely.

\section{Preliminaries} \label{sec:pre}

\subsection{Polar decomposition} \label{sec:pd}
Polar decomposition is important in   algorithm design and analysis for $L_1$-norm PCA.   The existence of polar decomposition and its connections with SVD can be found in   classical materials; see, e.g., \cite{higham1986computing,horn1990matrix}. We summarize them in the following  results. 
\begin{theorem}[Polar decomposition] \label{th:polar_dec}
	Let $C\in\mathbb R^{m\times n}$, $m\geq n$. Then there exist a partially orthonormal	 $U\in\st{m, n}$ and a   symmetric positive semidefinite matrix $H\in \PSD{n\times n}$ such that
	\[ 
	C = UH,~U^\top U = I\in \mathbb R^{n\times n},
	\]
	where $H$ is uniquely determined. $(U,H)$ is called the polar decomposition of $C$.  Furthermore, if ${\rm rank}(C)=n$, then $H$ is symmetric positive definite and $U$ is also uniquely determined.
\end{theorem}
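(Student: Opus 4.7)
The plan is to reduce both existence and uniqueness to the thin singular value decomposition of $C$. First I would write $C = P\Sigma Q^\top$ with $P\in\st{m,n}$, $Q\in\orth{n}$, and $\Sigma\in\mbr{n\times n}$ a diagonal matrix of nonnegative singular values $\sigma_1\geq\cdots\geq\sigma_n\geq 0$. Next I would set $U:=PQ^\top$ and $H:=Q\Sigma Q^\top$, and verify the three defining properties by direct multiplication: $U^\top U = QP^\top PQ^\top = I$, so $U\in\st{m,n}$; $H$ is symmetric with eigenvalues $\sigma_i\geq 0$, so $H\in\PSD{n\times n}$; and $UH = PQ^\top Q\Sigma Q^\top = P\Sigma Q^\top = C$. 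This yields existence.

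For uniqueness of $H$, I would suppose $C=UH$ is any decomposition with $U^\top U = I$ and $H\in\PSD{n\times n}$, and square to obtain
\[
C^\top C \;=\; H U^\top U H \;=\; H^2.
\]
Since a positive semidefinite matrix has a unique positive semidefinite square root, $H$ is forced to equal $(C^\top C)^{1/2}$ and is therefore unique. For the rank-$n$ case, the condition ${\rm rank}(C)=n$ makes all $\sigma_i$ strictly positive, so the eigenvalues of $H$ are positive, placing $H$ in $\PSDD{n\times n}$ and thus making it invertible; the identity $C=UH$ then gives $U=CH^{-1}$, uniquely determined.

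The argument is essentially a one-line reduction to SVD, so there is no serious obstacle; the only things to keep track of are using the \emph{thin} rather than full SVD (so that the left factor has the required size $m\times n$ with orthonormal columns rather than being $m\times m$ orthogonal), and keeping the dimensions straight in the products $QP^\top PQ^\top$ and $PQ^\top Q\Sigma Q^\top$. The result then aligns with the classical statements in \cite{higham1986computing,horn1990matrix}.
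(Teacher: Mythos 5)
Your proof is correct, and it follows exactly the route the paper itself takes: the paper defers the proof of this theorem to the classical references but records precisely your construction $U=PQ^{\top}$, $H=Q\Sigma Q^{\top}$ from the compact SVD in Proposition \ref{prop:pd_connect_svd}, and your uniqueness argument via $C^{\top}C=H^{2}$ and the uniqueness of the positive semidefinite square root is the standard completion. No gaps.
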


Throughout this work, we   write polar decomposition as PD for short, and we respectively call $U$ and $H$ in PD of $C$ the $U$-factor and the $H$-factor. 
\begin{proposition}[c.f. \cite{higham1986computing,horn1990matrix}]\label{prop:pd_connect_svd}
	Let $C = P\Sigma Q^{\top}$ be a compact SVD of $C\in\mbr{m\times n}$, $m\geq n$, where $P\in \st{m,n}$, $Q \in \orth{n   }$, and $\Sigma\in\mbr{n\times n}$ is diagonal with the singular values being nonnegative and arranged in a descending order. Then 
	$C=PQ^{\top}Q\Sigma Q^{\top} $, where $$U:=PQ^{\top} ~{\rm and}~ H:= Q\Sigma Q^{\top}$$ 
	give the PD of $C$. Conversely, if $C=UH$ is the PD of $C$, by writing down $H=Q\Sigma Q^{\top}$ as its spectral decomposition, then $(UQ)\Sigma Q^{\top}$ is a compact SVD of $H$. 
\end{proposition}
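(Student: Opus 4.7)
The plan is to verify both directions by direct substitution, using only the defining orthogonality relations $P^\top P = I$, $Q^\top Q = QQ^\top = I$ and the fact that $\Sigma$ is diagonal with nonnegative entries.

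For the forward direction, given the compact SVD $C = P\Sigma Q^\top$, I would first check that $U := PQ^\top$ lies in $\st{m,n}$ by computing $U^\top U = QP^\top P Q^\top = Q Q^\top = I$, and that $H := Q\Sigma Q^\top$ is symmetric positive semidefinite since $\Sigma$ has nonnegative diagonal entries and $Q \in \orth{n}$. Then the equality
\[
UH = PQ^\top Q\Sigma Q^\top = P\Sigma Q^\top = C
\]
follows immediately, so $(U,H)$ is a valid polar decomposition.

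For the converse, given $C = UH$ with $U\in\st{m,n}$, $H\in\PSD{n\times n}$, I would take the spectral decomposition $H = Q\Sigma Q^\top$ with $Q\in\orth{n}$ and $\Sigma$ diagonal containing the eigenvalues of $H$ in descending order (which are automatically nonnegative by positive semidefiniteness). Setting $P := UQ$, one verifies $P^\top P = Q^\top U^\top U Q = Q^\top Q = I$, so $P\in\st{m,n}$, and
\[
C = UH = UQ\Sigma Q^\top = P \Sigma Q^\top,
\]
which is the required compact SVD of $C$.

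There is no genuine obstacle here; the whole argument is a bookkeeping exercise with orthogonality, together with the observation that the eigenvalues of a positive semidefinite matrix are nonnegative and can be reordered (reordering columns of $Q$ correspondingly) to match the descending convention of singular values. Existence and uniqueness of the factors in Theorem~\ref{th:polar_dec} are not needed for this proposition, since we are only exhibiting one compatible pair of decompositions in each direction.
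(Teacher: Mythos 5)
Your verification is correct and is precisely the standard direct computation from the cited classical references; the paper itself states this proposition without proof (it only points to \cite{higham1986computing,horn1990matrix}), so there is no in-paper argument to diverge from. The only remark worth making is that in the converse you have, correctly, read the statement's ``compact SVD of $H$'' as the intended ``compact SVD of $C$'', since $(UQ)\Sigma Q^{\top} = UH = C$.
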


An important conclusion  about PD is given in the following, which can also be found in \cite{higham1986computing,horn1990matrix}. 
\begin{proposition}\label{prop:polar_max}
	Let  $(U,H)$ be a PD of $C\in\mathbb R^{m\times n}$, $m\geq n$ with $U\in\st{m, n}$ and $H\in\PSD{n\times n}$. Then 
	\begin{equation}\label{prob:key_subprob_prototype} 
	U\in\arg\max_{X\in \st{m,n}} \innerprod{C}{X}.
	\end{equation}
	In particular, if $\rank{C}=n$, then $U$ is uniquely determined and $H\in\PSDD{n\times n}$. 	Conversely, if $U$ is a maximizer of the above problem, then there exists an $H \in\PSD{n\times n}$ such that $C=UH$.
\end{proposition}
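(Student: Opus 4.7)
The plan is to reduce everything to the compact SVD via Proposition \ref{prop:pd_connect_svd}. Writing $C = P\Sigma Q^{\top}$ with $P\in\st{m,n}$, $Q\in\orth{n}$ and $\Sigma=\operatorname{diag}(\sigma_1,\ldots,\sigma_n)$ (the singular values in descending order), the stated PD reads $U=PQ^{\top}$, $H=Q\Sigma Q^{\top}$. For any $X\in\st{m,n}$ I would introduce the auxiliary matrix $Z := P^{\top}XQ\in\mbr{n\times n}$, so that $\innerprod{C}{X}=\operatorname{tr}(Q\Sigma P^{\top}X)=\operatorname{tr}(\Sigma Z)=\sum_{i=1}^n\sigma_i Z_{ii}$. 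The main estimate is $\|Z\|_2\le 1$, which is a one-line check from $Z^{\top}Z = Q^{\top}X^{\top}(PP^{\top})XQ \preceq Q^{\top}X^{\top}XQ = I$, since $PP^{\top}$ is an orthogonal projection with spectral norm one. This forces $|Z_{ii}|\le 1$, hence $\innerprod{C}{X}\le \operatorname{tr}(\Sigma)$, with equality at $X=PQ^{\top}$, where $Z=I$; this proves the maximization.

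For the uniqueness statement under $\rank{C}=n$, every $\sigma_i>0$, so equality forces $Z_{ii}=1$ for each $i$; combined with $\|Ze_i\|\le 1$ and $\|Z^{\top}e_i\|\le 1$, the remaining entries in each row and column of $Z$ must vanish, leaving $Z=I$ and $X=PQ^{\top}=U$. The positive definiteness of $H=Q\Sigma Q^{\top}$ is then immediate from all $\sigma_i>0$.

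For the converse, given any maximizer $U$ and $r=\rank{C}$, equality in the chain above now only pins down $Z_{ii}=1$ for $i\le r$, and the same row/column-norm argument forces $Z$ to be block-diagonal with top-left block $I_r$ and a contractive block $Z'$ in the bottom-right. The natural candidate is $H:=U^{\top}C$, and I would verify two claims. First, $H\in\PSD{n\times n}$: a direct calculation gives $H=QZ^{\top}\Sigma Q^{\top}$, and the block form of $Z$ together with $\Sigma=\operatorname{diag}(\Sigma_r,0)$ yields $Z^{\top}\Sigma=\Sigma$, so $H=Q\Sigma Q^{\top}\succeq 0$. Second, $UH=C$: decomposing $U=PZQ^{\top}+V$ with $P^{\top}V=0$, the orthonormality $U^{\top}U=I$ gives $(VQ)^{\top}(VQ)=I-Z^{\top}Z$, whose first $r$ diagonal entries vanish, so the first $r$ columns of $VQ$ are zero. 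This kills the contribution of $V$ on the relevant columns and reduces $UH=UU^{\top}C=C$ to the pointwise identity $UU^{\top}P_j=P_j$ for $j\le r$, which one checks directly (only those columns of $P$ appear in $C=P\Sigma Q^{\top}$). I expect this rank-deficient part of the converse to be the main obstacle: the maximizer $U$ is no longer unique, and all relevant information has to be extracted from the block structure of $Z$ in order to show that the piece of $U$ orthogonal to the range of $P$ does not perturb $UU^{\top}C$.
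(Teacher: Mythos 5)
Your proof is correct, but it takes a genuinely different route from the paper. The paper does not prove Proposition \ref{prop:polar_max} in-house at all: it defers to the classical references \cite{higham1986computing,horn1990matrix}, and the only argument it actually carries out is the one in Lemma \ref{lem:polar_max_sufficient}, which establishes the forward direction by a completing-the-square identity, $\innerprod{C}{U}-\innerprod{C}{X}=\frac{1}{2}\bigfnorm{(U-X)\sqrt H}^2\geq 0$ for all $X\in\st{m,n}$. That identity is shorter than your trace argument and gives maximality, the quantitative error bound, and uniqueness under $\rank{C}=n$ all at once, but it says nothing about the converse. Your von Neumann-type argument --- passing to $Z=P^{\top}XQ$, showing $\|Z\|_2\le 1$ from $PP^{\top}\preceq I$, and extracting the block structure of $Z$ at equality --- is heavier for the forward direction but is precisely what delivers the converse in the rank-deficient case, which the paper never proves; your identification $H=U^{\top}C=QZ^{\top}\Sigma Q^{\top}=Q\Sigma Q^{\top}\succeq 0$ and the verification that the first $r$ columns of $VQ$ vanish are both sound. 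One presentational gloss: in the uniqueness paragraph, $Z=I$ by itself does not yet give $X=PQ^{\top}$; you must also kill the component of $X$ orthogonal to the range of $P$, via $X^{\top}X=QZ^{\top}ZQ^{\top}+V^{\top}V=I+V^{\top}V$ forcing $V=0$. You carry out exactly this decomposition in the converse, so the gap is cosmetic, but the uniqueness claim should invoke it explicitly.
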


Note that the form of \eqref{prob:key_subprob_prototype} is involved as a key subproblem in   several $L_1$-PCA algorithms \cite{nie2011robust,wang2021linear,wang2019globally,zheng2022linearly,chachlakis2020l1,markopoulos2018l1norm,markopoulos2014optimal}. However, it seems that it is rarely mentioned in the literature that \eqref{prob:key_subprob_prototype} has connections with PD, except \cite{tsagkarakis2018l1_norm}.

The following error estimation is useful.  
\begin{lemma}
	\label{lem:polar_max_sufficient}
	Let  $C\in \mbr{m\times n}$, $m\geq n$ and $C=UH$ be a PD, where  
	$U\in\st{m, n},~H\in\PSD{n\times n}$. Then   there is a $\lambda\geq 0$ which is equal to the smallest eigenvalue of $H$, such that
	\[			     
	\innerprod{U}{C}- \innerprod{X}{C} \geq \frac{\lambda}{2}\| U - X\|_F^2, ~\forall X\in \st{m,n};
	\]
	in particular, if $\rank{C}=n$, then $\lambda >0$.
\end{lemma}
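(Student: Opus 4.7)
The plan is to reduce the inequality to a statement about traces of products of symmetric matrices and then exploit positive semidefiniteness of $H - \lambda I$.

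First I would set $M := X^\top U \in \mathbb{R}^{n\times n}$ and rewrite both sides of the claimed inequality in terms of $M$ and $H$. Using $C = UH$, $U^\top U = I_n$, and the cyclic property of the trace, one gets
\[
\innerprod{U}{C}-\innerprod{X}{C} \;=\; \operatorname{tr}(H) - \operatorname{tr}(MH) \;=\; \operatorname{tr}\bigl((I-M)H\bigr).
\]
On the other side, $\|U\|_F^2 = \|X\|_F^2 = n$ and $\innerprod{U}{X} = \operatorname{tr}(M)$, so
\[
\tfrac{\lambda}{2}\|U-X\|_F^2 \;=\; \lambda\bigl(n - \operatorname{tr}(M)\bigr) \;=\; \lambda \operatorname{tr}(I-M).
\]
Thus the target inequality is equivalent to $\operatorname{tr}\!\bigl((I-M)(H-\lambda I)\bigr) \ge 0$, and I would take $\lambda := \lambda_{\min}(H) \ge 0$ so that $H':= H - \lambda I \succeq 0$.

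The remaining obstacle, which I regard as the crux of the argument, is that $M$ need not be symmetric, so one cannot directly apply the standard fact that $\operatorname{tr}(AB) \ge 0$ for $A,B \succeq 0$. To get past this, I would symmetrize: since $H'$ is symmetric, $\operatorname{tr}(MH') = \operatorname{tr}(H' M) = \operatorname{tr}(H' M^\top)$, so
\[
\operatorname{tr}\bigl((I-M)H'\bigr) \;=\; \tfrac{1}{2}\operatorname{tr}\!\Bigl(\bigl(2I - M - M^\top\bigr)H'\Bigr).
\]
Now the key observation is that $M = X^\top U$ has spectral norm at most $1$ (its singular values are cosines of principal angles between ranges of $X$ and $U$); hence for every $v\in\mathbb{R}^n$,
\[
v^\top (M + M^\top) v \;=\; 2\,v^\top M v \;\le\; 2\|v\|\,\|Mv\| \;\le\; 2\|v\|^2,
\]
so $2I - M - M^\top \succeq 0$. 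Combined with $H' \succeq 0$, the trace inequality $\operatorname{tr}((2I-M-M^\top)H') \ge 0$ follows from the standard PSD fact, and the main estimate is proved.

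Finally, for the ``in particular'' clause, if $\operatorname{rank}(C) = n$ then by Theorem~\ref{th:polar_dec} the $H$-factor is symmetric positive definite, so $\lambda = \lambda_{\min}(H) > 0$, and the inequality becomes strict whenever $U \neq X$. Overall, the proof is quite short once the trace reformulation and the PSD dominance $M + M^\top \preceq 2I$ are in place; no appeal to SVD or to Proposition~\ref{prop:polar_max} is actually needed.
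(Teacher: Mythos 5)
Your proof is correct. It is, at bottom, the same computation as the paper's, packaged differently: the paper writes the gap directly as a perfect square, $\innerprod{U}{C}-\innerprod{X}{C}=\tfrac12\bigfnorm{(U-X)\sqrt H}^2$, and then invokes $\bigfnorm{A\sqrt H}^2\geq\lambda_{\min}(H)\bigfnorm{A}_F^2$; you instead subtract $\lambda I$ from $H$ first and reduce to $\operatorname{tr}\bigl((2I-M-M^{\top})(H-\lambda I)\bigr)\geq 0$ via the PSD trace inequality. The step you flag as the crux --- proving $2I-M-M^{\top}\succeq 0$ via the operator-norm bound $\|X^{\top}U\|_2\leq 1$ --- is a small detour: since $U^{\top}U=X^{\top}X=I$, one has $2I-M-M^{\top}=(U-X)^{\top}(U-X)$, which is manifestly positive semidefinite and immediately recovers the paper's square-completion identity. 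So both proofs hinge on the same quadratic form; yours trades the explicit $\sqrt H$ for the standard fact $\operatorname{tr}(AB)\geq 0$ for $A,B\succeq 0$, which is marginally more self-contained but not shorter. Everything else (the choice $\lambda=\lambda_{\min}(H)$, and positive definiteness of $H$ when $\rank{C}=n$) matches the paper.
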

\begin{proof}
	We follow the notations in Proposition \ref{prop:pd_connect_svd} to write $H = Q\Lambda Q^{\top}$ be its spectral decomposition. We further denote $\Lambda := {\rm diag}[\lambda_1,\ldots,\lambda_n]$ with $\lambda_i\geq 0$, and $\sqrt H := Q{\rm diag}[\sqrt{ \lambda_1},\ldots,\sqrt{ \lambda_n}]Q^{\top}$. Let $\lambda:= \lambda_n = \lambda_{\min}(H)$. Then,
	\begin{align*}			 
	&\innerprod{C}{U} - \innerprod{C}{X} \\
	=&\innerprod{UH}{U} - \innerprod{UH}{X} \\
	=& \innerprod{H}{U^{\top}U} - \innerprod{UH}{X} \\
	=& \frac{1}{2}\left( \innerprod{H}{U^{\top}U} -2\innerprod{H}{U^{\top}X} + \innerprod{H}{X^{\top}X}   \right)\\
	=&\frac{1}{2}\bigfnorm{ \bigxiaokuohao{ U-X   } \sqrt H   }^2 \\
	\geq& \frac{\lambda}{2} \bigfnorm{ U-X  }^2,
	\end{align*}
	where the third equality is due to $\innerprod{H}{U^{\top}U} = \innerprod{H}{I} = \innerprod{H}{X^{\top}X}$. If $\rank{C}=n$ then $H\in \PSDD{n\times n}$ is positive definite and hence $\lambda = \lambda_n>0$. 
\end{proof}

\begin{remark}\label{rmk:pd_error_estimation}
	The above estimation also explains why \eqref{prob:key_subprob_prototype} has a unique solution when $\rank{C}=n$. 
	
	We  discuss a little more on the nonuniqueness of $\projMat$ when $C$ is not of full column rank. We still follow the notations in Proposition \ref{prop:pd_connect_svd}. Assume now that $\rank{C}=r<n<m$ and write $P = [\mathbf p_1,\ldots,\mathbf p_n]$. Let $\mathbf p^\prime$ be any normalized vector in the orthogonal complement of $P$ and denote $\tilde P :=[\mathbf p_1,\ldots,\mathbf p_{n-1},\mathbf p^\prime]$ and $\tilde \projMat := \tilde PQ^{\top}$. Then $\innerprod{C}{\projMat} - \innerprodnotleftright{C}{\tilde\projMat} = \innerprodnotleftright{P\Sigma Q^{\top}}{PQ^{\top}}  - \innerprodnotleftright{P\Sigma Q^{\top}}{   \tilde PQ^{\top}} = \innerprod{\Sigma}{I} - \innerprodnotleftright{\Sigma}{\tilde{P}^{\top}P} = \innerprod{\Sigma}{I} - \innerprodnotleftright{\Sigma}{\operatorname{diag}[1,\ldots,1,0]}=0$, where the last equality holds because $\Sigma_{nn}=0$. Thus $\tilde{U}$ is another $U$-factor of the PD of $C$.
\end{remark}

The following lemma is also useful.
\begin{lemma}
	\label{prop:relation:2}
	Given a nonzero $A \in\mathbb R^{m\times n}$, $m\geq n$, and let $B= \tau \projMat + A \neq 0$, where     $\tau>0$, $U \in \st{m,n}$. If $\projMat$ is a $\projMat$-factor of PD of $B$ and $\tau < \sigma^+_{\min}(A)$, then $\projMat$ is also a $\projMat$-factor of PD of $A$.
\end{lemma}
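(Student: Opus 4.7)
The plan is to exploit the polar decomposition of $B$ to write $A$ in the desired form $A=UH_A$ with $H_A\succeq 0$, and then to use the bound $\tau<\sigma_{\min}^+(A)$ to verify the positive semidefiniteness of $H_A$.

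First, since $U$ is a $U$-factor of PD of $B$, by Theorem \ref{th:polar_dec} there exists $H\in \PSD{n\times n}$ with $B=UH$. Substituting $B=\tau U+A$ and using $U^{\top}U=I$ yields
\[
A \;=\; U(H-\tau I) \;=:\; U\tilde H,
\]
where $\tilde H$ is symmetric. To conclude that $U$ is a $U$-factor of PD of $A$, it then suffices to show $\tilde H\succeq 0$, so the entire argument reduces to a lower bound on the eigenvalues of $H$.

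Next, I would compute
\[
A^{\top}A \;=\; \tilde H\,U^{\top}U\,\tilde H \;=\; \tilde H^{2},
\]
so the singular values of $A$ are exactly $\{\,|\lambda_i(H)-\tau|\,\}_{i=1}^{n}$. Let $\lambda$ be any eigenvalue of $H$; by hypothesis $\lambda\ge 0$. If $\lambda=\tau$, then $\lambda\ge\tau$ trivially. Otherwise $|\lambda-\tau|$ is a \emph{positive} singular value of $A$, hence by assumption $|\lambda-\tau|\ge\sigma_{\min}^+(A)>\tau$. This forces either $\lambda>2\tau$ or $\lambda<0$; the second case is ruled out by $H\succeq 0$, so $\lambda\ge 2\tau>\tau$. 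Combining the cases, every eigenvalue of $H$ satisfies $\lambda\ge \tau$, so $\tilde H=H-\tau I\succeq 0$. This produces the decomposition $A=U\tilde H$ with $U\in\st{m,n}$ and $\tilde H\in\PSD{n\times n}$, which by definition says $U$ is a $U$-factor of PD of $A$.

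The mild subtlety — and the only real step that requires care — is the argument excluding the case $\lambda=0$ and, more generally, the case $0<\lambda<\tau$. Both are handled uniformly by noting that whenever $\lambda\neq\tau$ the quantity $|\lambda-\tau|$ is a \emph{positive} singular value of $A$, so it must be bounded below by $\sigma_{\min}^+(A)>\tau$; this strict inequality together with $\lambda\ge 0$ is exactly what rules these cases out. No full rank assumption on $A$ (or on $B$) is needed, and the proof does not invoke Lemma \ref{lem:polar_max_sufficient} or any uniqueness of the polar factor — it is a direct algebraic verification using only $U^{\top}U=I$ and the spectral characterization of $A^{\top}A=\tilde H^2$.
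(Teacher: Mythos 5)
Your proof is correct and follows essentially the same route as the paper's: both arguments reduce to showing that the singular values of $A$ are $\bigjueduizhi{\lambda_i(H)-\tau}$ and then use $\sigma^+_{\min}(A)>\tau$ to rule out the branch $\lambda_i(H)<\tau$ by the same contradiction ($\tau-\lambda_i(H)>\tau$ would force $\lambda_i(H)<0$). The only difference is cosmetic: you read off the singular values from $A^{\top}A=\tilde H^{2}$, which is a slightly cleaner derivation than the paper's explicit construction of a compact SVD of $A$ with a sign-correction matrix $S$, but the substance is identical.
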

\begin{proof}
	Let   $H\in\mathbb S^{n\times n}_+$ such that $B = UH$ be a PD of $B$. In addition, denote a compact SVD of $B = P\Sigma_B Q^\top$, where $P \in \st{m,n}$, $Q\in \orth{n,n}$, and $\Sigma_B = {\rm diag}(\sigma_1(B),\ldots,\sigma_n(B) )$ with the singular values of $B$ satisfying $\sigma_1(B)\geq \cdots\geq \sigma_n(B)\geq 0$. Then Proposition \ref{prop:pd_connect_svd} shows that $H = Q\Sigma_B Q^\top$, and $U=PQ^\top$.  
	On the other hand, write $s:= [ {\rm sgn}(\sigma_1(B) -\tau),\ldots, {\rm sgn}(\sigma_n(B)-\tau) ]^\top\in \mathbb R^n$ and denote $S$ as a diagonal matrix with entries of $s$ on the diagonal elements of $S$ (in particular, we replace every $S_{ii}=0$ by $S_{ii}=1$ if this happens). 
	Then $A$ can be expressed as
	\begin{align}\label{eq:pd:1}
	\begin{split}
	A &= B-\tau \projMat = P(\Sigma_B - \tau I)Q^\top \\
	& = P \cdot{\rm diag}  \bigzhongkuohao{ |\sigma_1(B)-\tau|,\ldots,|\sigma_n(B)-\tau|}  \cdot \bigxiaokuohao{QS}^\top;
	\end{split}
	\end{align}
	it can be seen that	the above expression is a compact SVD of $A$, with singular values being $|\sigma_1(B)-\tau|,\ldots,|\sigma_n(B)-\tau|$  (not necessarily arranged in  the descending order). 
	
	We next show that if $\lambda < \sigma^+_{\min}(A)$, then ${\rm sgn}(\sigma_i(B)-\tau) \neq -1$ for all $i$, i.e., all the $\sigma_i(B)\geq \tau$.  Suppose on the contrary that there exists  an $  \hat i$ such that ${\rm sgn}(\sigma_{\hat i}(B)-\tau)=-1$, $1\leq \hat i\leq n$. Since $|\sigma_{\hat i}(B)-\tau|$ is a singular value of $A$, this together with ${\rm sgn}(\sigma_{\hat i}(B)-\tau)=-1$ means that $|\sigma_{\hat i}(B)-\tau|    $ is a nonzero singular value of $A$, and so  $	|\sigma_{\hat i}(B)-\tau|\geq \sigma^+_{\min}(A) >\tau$. This further means that
	$$
	\tau-\sigma_{\hat i}(B)>\tau\Leftrightarrow \sigma_{\hat i}(B)<0,$$  which contradicts   that $\sigma_i(B)\geq 0$ for all $i$. As a result,  ${\rm sgn}(\sigma_i(B)-\tau) \geq 0$ for all $i$,  i.e,   $ \hat H := Q(\Sigma_B-\tau I)Q^\top \in\mathbb S^{n\times n}_+$. This also shows that   $S=I$ (since we have replaced $0$ by $1$ on the diagonal entries of $S$). \eqref{eq:pd:1} together with the definitions of $\projMat$ and $\hat H$ gives that 
	$A = U\hat H$, i.e., $\projMat$ is a $\projMat$-factor of $A$.
\end{proof}

When $n=1$, PD has a simple form:
\begin{remark}\label{rmk:pd_n1}
	When $n=1$, i.e., $C$ is a column vector, $\projMat = \operatorname{PD}(C) = \frac{C}{\bignorm{C}}$, and now $H=\|C\|$. 
\end{remark}

In the $n=1$ case, we present an analogue of Lemma \ref{lem:polar_max_sufficient} for convenience.
\begin{corollary}
	\label{col:polar_max_sufficient_n1}
	Let $C\in\mathbb R^{m}$, with $U = \frac{C}{\|C\|}$ and $H=\|C\|$ being the PD of $C$. Then 
	\[
	\innerprod{U}{C} - \innerprod{X}{C} \geq \frac{\|C\|}{2}\bigfnorm{U-X}^2,~\forall  X\in\mathbb R^m ~{\rm with}~\|X\|=1.
	\] 
\end{corollary}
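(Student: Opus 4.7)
The plan is to derive Corollary 2.1 as an immediate specialization of Lemma 2.1 to the case $n=1$, appealing to the explicit form of the PD noted in Remark 2.2. Assuming $C\neq 0$ (the case $C=0$ is trivial since both sides vanish), Remark 2.2 identifies the PD factors as $U = C/\|C\|$ and $H = \|C\|$, so the ``matrix'' $H$ is just the positive scalar $\|C\|$. Consequently $\lambda_{\min}(H) = \|C\|$, and plugging this value into the estimate of Lemma 2.1 with $m$ replaced by the ambient vector dimension and $n=1$ yields the claimed inequality verbatim. So structurally the result is a one-line corollary of Lemma 2.1 plus Remark 2.2.

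For completeness I would also include a short direct verification, which actually shows the bound is tight. Using $\langle U,C\rangle = \|C\|$ and $\langle X,C\rangle = \|C\|\langle U,X\rangle$, the left-hand side equals $\|C\|\,(1-\langle U,X\rangle)$. On the right-hand side, $\|U\|=\|X\|=1$ gives $\|U-X\|^2 = 2-2\langle U,X\rangle$, so $\tfrac{\|C\|}{2}\|U-X\|^2 = \|C\|\,(1-\langle U,X\rangle)$. Both sides therefore coincide, confirming the inequality and incidentally showing equality (not merely a lower bound) in this scalar case. There is no genuine obstacle here: the only points to be careful about are (i) handling $C=0$ separately so that $U=C/\|C\|$ is well-defined, and (ii) checking that the use of Lemma 2.1 is legitimate, i.e., that with $n=1$ the constant $\lambda$ produced there is precisely $\|C\|$, which is immediate from Remark 2.2 and the definition $\lambda:=\lambda_{\min}(H)$.
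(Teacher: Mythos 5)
Your proposal is correct and matches the paper's intent: the corollary is stated there without proof precisely because it is the $n=1$ specialization of Lemma \ref{lem:polar_max_sufficient} combined with Remark \ref{rmk:pd_n1}, which is exactly your first argument. Your direct computation showing that the inequality is in fact an equality when $\|X\|=1$ is a correct and slightly stronger observation, but it does not constitute a different route.
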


Before ending this subsection,  throughout this work, we will use the notation $[U,H  ] = \operatorname{PD}(C)$ to denote a PD of $C$. If only the   $U$-factor is required, we simply write it as $U = \operatorname{PD}(C)$. If context permits, ``$=$'' here means ``belongs to''.

\subsection{PCA, $L_1$-norm PCA, and optimality conditions for $L_1$-norm PCA}

Given a set of $n$ samples of dimension $d$: $\dataMat = [\mathbf x_1,\ldots,\mathbf x_n]\in\mbr{d\times n}$, variance matrix $S_\dataMat = \frac{1}{n}\dataMat\dataMat^\top$, and the (partial) projection matrix $\projMat \in \st{d,K} $ with $K \geq 1$ (usually $K\leq \min\{d,n \}$), PCA maximizes the variance matrix in the projection subspace, which amounts to solving 
\begin{align*}
\max_{\projMat}~\innerprod{S_{\dataMat}}{\projMat\projMat^\top}~{\rm s.t.}~\projMat \in\st{d,K}.
\end{align*}
Since $\innerprod{S_{\dataMat}}{\projMat\projMat^\top}$ is equivalent to $\bigfnorm{\dataMat^\top\projMat}^2/n$, it is often written as
\begin{align*}
\max_{\projMat}~\bigfnorm{ \dataMat^{\top}\projMat  } ~{\rm s.t.}~\projMat \in\st{d,K}.
\end{align*}

Due to the use of the least-square loss,   PCA is sensitive to non-Gaussian noise. 
A popular alternative is to replace the least-square loss with the least absolute loss. i.e., to replace the $L_2$-norm with the $L_1$-norm \cite{kwak2008principal,nie2011robust,markopoulos2017efficient,wang2021linear}, resulting into the following $L_1$-norm PCA model:
\begin{align}\label{prob:l1_pca}
&\max_{\projMat}~F(\projMat):=\bigonenorm{ \dataMat^{\top}\projMat  } ~{\rm s.t.}~\projMat \in\st{d,K}. 
\end{align} 

When the projection dimention $K=1$, the projection matrix $\projMat$ reduces to a projection vector $\mathbf u\in\mathbb R^d$, and the $L_1$-norm PCA is given by
\begin{align}\label{prob:l1_pca_K1}
\max_{\mathbf u}~F(\mathbf u):=\bigonenorm{ \dataMat^{\top}\mathbf u  } ~{\rm s.t.}~\mathbf u^\top\mathbf u = 1. 
\end{align}
This special case was studied in \cite{kwak2008principal,kim2020asimple}. 

Although $F$ in \eqref{prob:l1_pca} is nonsmooth, its KKT point can be written down as usual. By introducing a dual variable $\Lambda\in \SM{K\times K}$ which is symmetric, its Lagrangian function is given by 
\[L(\projMat,\Lambda) = F(U) - \innerprod{\Lambda}{\projMat^{\top}\projMat - I}.\]
Then $\projMat\in \st{d,K}$ is a KKT point of \eqref{prob:l1_pca} if there is a $\Lambda \in \mathbb S^{K\times K}$such  that 
\[ \projMat \Lambda \in \partial F(\projMat),\]
where $\partial f(\mathbf x)$ denotes the subdifferential of a convex function $f$ at $\mathbf x$ defined as
\begin{align}\label{eq:subdifferential}\partial f(\mathbf x) := \bigdakuohao{ \boldsymbol{\xi} \mid f(\mathbf y) \geq f(\mathbf x) + \innerprod{\boldsymbol{\xi}}{\mathbf y-\mathbf x},\forall \mathbf y   }.\end{align}
By calculus rule, 
$
\partial F(\projMat) = \dataMat \partial \bigonenorm{V}\Big |_{V = \dataMat^{\top}\projMat}.
$
Therefore,    $\projMat \in \st{d,K}$ is a KKT point of \eqref{prob:l1_pca} if  there is a $\Lambda\in \mathbb S^{K\times K}$ such that $\projMat\Lambda \in \dataMat\partial \bigonenorm{V}\Big |_{V = \dataMat^{\top}\projMat}$, i.e.,
\begin{align}
\label{eq:KKT}
{\rm (KKT)}~~W=\projMat \Lambda,~   W \in   \dataMat \partial \bigonenorm{V}\Big |_{V = \dataMat^{\top}\projMat}.
\end{align}

On the other hand, we follow    \cite{luss2013conditional} to say that a matrix $\projMat\in\st{d,K}$ satisfies the first-order optimality criteria (FOC) if 
\begin{align}\small
\label{eq:FOC}
\begin{split}
{\rm (FOC)}~~&\exists W\in \dataMat\partial \bigonenorm{V}\Big |_{V = \dataMat^{\top}\projMat},~{\rm s.t.}\\ &~\innerprod{W}{\projMat - Z} \geq 0,  ~ \forall Z\in\st{d,K}.
\end{split}
\end{align}
Note that \eqref{eq:FOC} is equivalent to that $\projMat$ is a maximizer of the following maximization problem:
\begin{align}
\label{prob:FOC_equivalent_max}
\exists W\in \dataMat \partial \bigonenorm{V} \Big |_{V = \dataMat^{\top}\projMat},~\projMat \in \arg\max_{Z\in \st{d,K} } \innerprod{W}{Z}.
\end{align}
By Proposition \ref{prop:polar_max}, the above means that $\projMat$ is a PD factor of $W$. More precisely, $\projMat$ is a FOC point if and only if there is a symmetric positive semidefinite matrix $H\in \PSD{K\times K}$ such that
\begin{align}
\label{eq:FOC_PD}
\begin{split}
&W = \projMat H, ~W\in\dataMat\partial\bigonenorm{V}\Big |_{V = \dataMat^{\top}\projMat},~{\rm where}\\
&~~~~\projMat\in\st{d,K},~H\in\PSD{K\times K},
\end{split}
\end{align}
or we can equivalently rewrite it in the following  form, using the notation presented in the last of Section \ref{sec:pd}:
\begin{align}
\label{eq:FOC_fpf}
\projMat = \operatorname{PD}\bigxiaokuohao{W  },~ W\in \dataMat \partial \bigonenorm{V}\Big |_{V=\dataMat^{\top}\projMat}. 
\end{align}
In view of the above derivations, since \eqref{eq:FOC_PD} requires that $H\in \PSD{K\times K}$, while $\Lambda\in \SM{K\times K}$ in \eqref{eq:KKT}, we conclude that:
\begin{proposition}
	FOC point   \eqref{eq:FOC} $=$ \eqref{prob:FOC_equivalent_max} $=$ \eqref{eq:FOC_PD} $=$ \eqref{eq:FOC_fpf} $\subseteq$  KKT point \eqref{eq:KKT}.
\end{proposition}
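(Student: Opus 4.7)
The plan is to walk through the four equivalences and the one inclusion in order, since each step is short once the right tool is identified. First I would note that \eqref{eq:FOC} and \eqref{prob:FOC_equivalent_max} are nothing more than two formulations of the same variational inequality: $\innerprod{W}{\projMat - Z}\geq 0$ for every $Z\in\st{d,K}$ is, by rearrangement, precisely the statement that $\projMat$ attains $\max_{Z\in \st{d,K}}\innerprod{W}{Z}$. No auxiliary result is needed at this stage.

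Next, to go from \eqref{prob:FOC_equivalent_max} to \eqref{eq:FOC_PD} I would invoke Proposition \ref{prop:polar_max} with $C=W$ and $m=d$, $n=K$. The ``converse'' half of that proposition gives an $H\in\PSD{K\times K}$ with $W=\projMat H$, which is exactly what \eqref{eq:FOC_PD} asserts (coupled with the unchanged subdifferential membership $W\in\dataMat\partial \bigonenorm{V}\big|_{V=\dataMat^\top\projMat}$). Conversely, the first half of the same proposition says that a $U$-factor of a PD of $W$ solves $\max_{Z\in\st{d,K}}\innerprod{W}{Z}$, giving \eqref{prob:FOC_equivalent_max} from \eqref{eq:FOC_PD}. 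Passing from \eqref{eq:FOC_PD} to \eqref{eq:FOC_fpf} is purely notational: by the convention introduced at the end of Section \ref{sec:pd}, writing $\projMat=\operatorname{PD}(W)$ is by definition the existence of some $H\in\PSD{K\times K}$ with $W=\projMat H$, so the two display exactly the same content.

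Finally, for the inclusion $\eqref{eq:FOC_PD}\subseteq\eqref{eq:KKT}$ I would just observe that $\PSD{K\times K}\subseteq \SM{K\times K}$. Hence if $\projMat$ satisfies \eqref{eq:FOC_PD} with some $H\in\PSD{K\times K}$, then setting $\Lambda:=H\in \SM{K\times K}$ and using the same $W$ produces a pair witnessing \eqref{eq:KKT}. The inclusion can be strict because \eqref{eq:KKT} only demands $\Lambda$ symmetric, not positive semidefinite, but strictness is not part of the claim so no example is required.

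I do not expect any genuine obstacle here; the only thing that has to be handled carefully is the interplay between the subdifferential membership and the factorization, and both conditions propagate unchanged through each of the equivalences because only the shape of the constraint on the ``multiplier'' (belonging to $\st{d,K}$ maximizer set vs. $U$-factor vs. $\PSD{K\times K}$ vs. $\SM{K\times K}$) is being rewritten. The proof is therefore a chain of one-line arguments, with Proposition \ref{prop:polar_max} doing all of the non-trivial work.
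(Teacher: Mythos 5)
Your proposal is correct and follows essentially the same route as the paper, which establishes the proposition through exactly the chain you describe in the derivations immediately preceding it: the variational-inequality restatement \eqref{eq:FOC}$\,=\,$\eqref{prob:FOC_equivalent_max}, the appeal to Proposition \ref{prop:polar_max} to pass to the polar-decomposition forms \eqref{eq:FOC_PD} and \eqref{eq:FOC_fpf}, and the inclusion into \eqref{eq:KKT} via $\PSD{K\times K}\subseteq\SM{K\times K}$. No gaps.
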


\section{The Non-Greedy Algorithm and Finite-Step Convergence} \label{sec:non_greedy_alg_finite_step_conv}

NGA proposed in \cite{nie2011robust} for solving $L_1$-norm PCA \eqref{prob:l1_pca} involves the following two steps in each iteration:
\begin{center}
	\begin{boxedminipage}{8.75cm}
		\begin{align*}
		&1. {~\rm Compute}~\alpha_i = \operatorname{sgn}\bigxiaokuohao{(U^k)^{\top}\mathbf x_i}~{\rm and~let}~M = \sum^n_{i=1}\mathbf x_i\alpha_i^{\top};\\
		&2. {~\rm Compute~the~compact~SVD~of~}M~{\rm as}~M=P\Sigma Q^{\top}\\&~~~{\rm and~let}~U^{k+1}=PQ^{\top}.  
		\end{align*}
	\end{boxedminipage}
\end{center}
That is, the algorithm iteratively updates the projection matrix $\projMat^k$ via SVD of $M\in \mathbb R^{d\times K}$, where every column of $M$ is a linear combination of the columns of the data matrix $\dataMat$.

Since $\alpha_i^{\top} = \operatorname{sgn}\bigxiaokuohao{\mathbf x_i^{\top}U^k}$,  we see that 
\[M=\sum^n_{i=1}\mathbf x_i\operatorname{sgn}\bigxiaokuohao{\mathbf x_i^{\top}U^k} = X\operatorname{sgn}\bigxiaokuohao{X^{\top}U^k}.\]  On the other hand, it follows from Proposition \ref{prop:pd_connect_svd} that step 2 is exactly computing a PD of $M$. Using the notation presented in the last of subsection \ref{sec:pd}, we may denote step 2 as $U^{k+1}=\operatorname{PD}(M)$. Thus we have:
\begin{fact}
	NGA  proposed in \cite{nie2011robust} can be  equivalently written as the following simple fixed-point format:
	\begin{center}
		\begin{boxedminipage}{8.75cm}
			\begin{align}
			\label{alg:fixed_point_iteration_l1_pca}
			{\rm (NGA)}~~~~		\projMat^{k+1} = \operatorname{PD}\bigxiaokuohao{ \dataMat \operatorname{sgn}\bigxiaokuohao{\dataMat^{\top}\projMat^k  }  }.
			\end{align} 
		\end{boxedminipage}
	\end{center}
\end{fact}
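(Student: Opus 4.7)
The plan is to establish the claimed fixed-point form by verifying two separate identifications, both essentially bookkeeping. First, I would rewrite the matrix $M$ produced in step~1 as $\dataMat\operatorname{sgn}(\dataMat^\top\projMat^k)$. Second, I would argue that the $PQ^\top$ assignment in step~2 is precisely a $\projMat$-factor of a polar decomposition of $M$, invoking Proposition~\ref{prop:pd_connect_svd}.

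For the first step, the key observation is that $\operatorname{sgn}$ is applied entrywise, so $\alpha_i=\operatorname{sgn}((\projMat^k)^\top\mathbf x_i)\in\mathbb R^K$ satisfies $\alpha_i^\top=\operatorname{sgn}(\mathbf x_i^\top\projMat^k)$. Stacking the rank-one terms gives
\[
M \;=\; \sum_{i=1}^{n}\mathbf x_i\alpha_i^\top \;=\; \sum_{i=1}^{n}\mathbf x_i\operatorname{sgn}(\mathbf x_i^\top\projMat^k) \;=\; \dataMat\,\operatorname{sgn}\!\bigxiaokuohao{\dataMat^\top\projMat^k},
\]
where $\operatorname{sgn}(\dataMat^\top\projMat^k)$ is viewed as the $n\times K$ matrix whose $i$-th row is $\alpha_i^\top$.

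For the second step, Proposition~\ref{prop:pd_connect_svd} states that given a compact SVD $M=P\Sigma Q^\top$, the pair $(PQ^\top,\,Q\Sigma Q^\top)$ is a PD of $M$. Therefore the update $\projMat^{k+1}=PQ^\top$ computed in step~2 is by definition a $\projMat$-factor of $\operatorname{PD}(M)$, which we may denote $\projMat^{k+1}=\operatorname{PD}(M)$ under the convention stated at the end of Section~\ref{sec:pd}. Chaining the two identifications yields the advertised formula $\projMat^{k+1}=\operatorname{PD}(\dataMat\operatorname{sgn}(\dataMat^\top\projMat^k))$.

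There is no real obstacle here; the argument is a pure rewriting that makes the algebraic structure of NGA transparent. The only mild subtlety is that when $M$ is rank deficient the $\projMat$-factor is not unique (cf.\ Remark~\ref{rmk:pd_error_estimation}), so the equality in \eqref{alg:fixed_point_iteration_l1_pca} must be read as ``belongs to'', exactly as the paper's notational convention allows. This nonuniqueness will become the genuine difficulty in the subsequent finite-step convergence analysis, but it does not affect the present equivalence.
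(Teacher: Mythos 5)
Your proposal is correct and follows the same route as the paper: rewrite $M=\sum_{i}\mathbf x_i\alpha_i^\top$ as $\dataMat\operatorname{sgn}(\dataMat^\top\projMat^k)$ and then identify the $PQ^\top$ update with the $\projMat$-factor of a polar decomposition via Proposition~\ref{prop:pd_connect_svd}. Your remark on non-uniqueness and the ``belongs to'' reading of the equality matches the paper's stated convention, so nothing is missing.
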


When $K=1$,   $\projMat$ reduces to a column vector $\mathbf u\in\mbr{d}$, and it follows from Remark \ref{rmk:pd_n1} that NGA \eqref{alg:fixed_point_iteration_l1_pca} reads as follows:
\begin{align}\label{alg:fixed_point_l1_pca_K_1}
\mathbf u^{k+1} = \dataMat\mathbf s^k/\bigfnorm{\dataMat\mathbf s^k},~{\rm where}~\mathbf s^k = \operatorname{sgn}\bigxiaokuohao{\dataMatTrans\mathbf u^k} \in\mbr{n}.
\end{align}

\cite{nie2011robust} showed that the algorithm NGA \eqref{alg:fixed_point_iteration_l1_pca} monotonically increases the objective function of \eqref{prob:l1_pca}. In the coming subsection, we will equivalently view NGA \eqref{alg:fixed_point_iteration_l1_pca} as a conditional subgradient method or an alternating maximization method. These two perspectives are important, as they allow us to establish the finite-step convergence conveniently. Convergence results will be presented in Subsection \ref{sec:finite_step_conv_fpi}.

\subsection{Two   perspectives of NGA} \label{sec:two_perspective}

\subsubsection{NGA \eqref{alg:fixed_point_iteration_l1_pca} as a conditional (sub)gradient}
We first show that NGA \eqref{alg:fixed_point_iteration_l1_pca} is in fact an instance of the conditional (sub)gradient method (CG for short, also known as the Frank-Wolfe method). CG was originally proposed in \cite{frank1956algorithm} for solving the convex problem $\min_{\mathbf x\in C} f(\mathbf x) $ with the iteration:
\begin{align*}
&\mathbf x^{k+1} = \alpha^k \mathbf z^k + (1-\alpha)\mathbf x^k,~\alpha^k \in (0,1],~{\rm where}\\
&~~~~~~~\mathbf z^{k+1} \in\arg\min_{\mathbf x\in C}\innerprod{\nabla f(\mathbf x^k)}{\mathbf z};
\end{align*}
here $f $ is smooth. Recent developments of CG can be found in the survey \cite{freund2016new}. When $C$ is nonconvex, CG is not directly applicable.  \cite{luss2013conditional} proposed a CG with unit step-size framework (we follow \cite{luss2013conditional} to call it CondGradU) for maximizing a (nonsmooth) convex function $f$ over a compact (possibly nonconvex) set $C$ via the following simple scheme:
\begin{align}
\label{alg:condg-u_prototype}
{\rm (CondGradU)}~~\mathbf x^{k+1} \in\arg\max_{\mathbf x\in C} \innerprod{  \boldsymbol{\xi}^k}{\mathbf x},~{\rm where}~  \boldsymbol{\xi}^k\in\partial f(\mathbf x^k).
\end{align}
If $f$ is smooth, then $\partial f(\mathbf x) = \bigdakuohao{\nabla f(\mathbf x)}$ and $\boldsymbol{\xi}^k=\nabla f(\mathbf x^k)$.

Now, recall $L_1$-norm PCA \eqref{prob:l1_pca}. Since $F(\projMat) = \bigonenorm{\dataMatTrans\projMat}$ is convex and $\st{d,K}$ is compact, we can apply CondGradU \eqref{alg:condg-u_prototype} to solve $L_1$-norm PCA, which leads to the   following scheme:
\begin{align*}
&\projMat^{k+1} \in\arg\max_{U\in \st{d,K}}\innerprod{\dataMat\subGMat^k}{\projMat},~{\rm where}\\
&~~~~~~~\dataMat\subGMat^k\in \partial F(\projMat^k) = \dataMat\partial \bigonenorm{V}\Big |_{V = \dataMatTrans \projMat^k},
\end{align*}
i.e., $\dataMat\subGMat^k$ is a subgradient of $F(\cdot)$ at $\projMat^k$. Using Proposition \ref{prop:polar_max}, we can write the above scheme as
\begin{center}
	\begin{boxedminipage}{8.75cm}
		\begin{align}\label{alg:subgrad_pd}
		\begin{split}
		{\rm (CondGradU)}~~&\projMat^{k+1} = \operatorname{PD}\bigxiaokuohao{\dataMat\subGMat^k},~{\rm where}\\
		&~~~~~~~\subGMat^k\in  \partial \bigonenorm{V}\Big |_{V = \dataMatTrans \projMat^k},
		\end{split}
		\end{align}
	\end{boxedminipage}
\end{center}
or simply write it in the following more  compact format:
\begin{align*}
\label{alg:subgradu_pd_fixed_point_version}
\projMat^{k+1} = \operatorname{PD}\bigxiaokuohao{\dataMat\partial \bigonenorm{V}\Big |_{V = \dataMatTrans \projMat^k}}.
\end{align*}

If one computes $\subgradMat^k =  \operatorname{sgn}\bigxiaokuohao{\dataMatTrans\projMat^k}$ in \eqref{alg:subgrad_pd}, then it is clear that $\dataMat\subgradMat^k\in \partial F(\projMat^k)$, and hence we conclude that:
\begin{proposition}
	NGA of \cite{nie2011robust}, which has been equivalently formulated as a fixed-point iteration: $\projMat^{k+1} = \operatorname{PD}\bigxiaokuohao{ \dataMat \operatorname{sgn}\bigxiaokuohao{\dataMat^{\top}\projMat^k  }  }$ in \eqref{alg:fixed_point_iteration_l1_pca}, is a special instance of  CondGradU. 
\end{proposition}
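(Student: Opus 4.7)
The plan is to verify, element by element, that the specific update defining NGA coincides with one legitimate instantiation of the CondGradU scheme \eqref{alg:subgrad_pd} applied to $L_1$-norm PCA \eqref{prob:l1_pca}. First I would identify the ingredients: the objective is $F(\projMat)=\bigonenorm{\dataMatTrans \projMat}$, which is convex as the composition of the convex $L_1$-norm with a linear map, and the constraint set is the Stiefel manifold $\st{d,K}$, which is compact; hence \eqref{prob:l1_pca} fits the setting of CondGradU exactly.

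Next I would unpack the subdifferential. By the chain rule for the composition with the linear map $\projMat\mapsto \dataMatTrans\projMat$, one obtains $\partial F(\projMat^k)=\dataMat\,\partial\bigonenorm{V}\big|_{V=\dataMatTrans\projMat^k}$, as already noted in the excerpt. The standard description of the subdifferential of the entrywise $L_1$-norm says that $\subGMat\in\partial\bigonenorm{V}$ if and only if $\subGMat_{ij}=\operatorname{sgn}(V_{ij})$ whenever $V_{ij}\neq 0$ and $\subGMat_{ij}\in[-1,1]$ otherwise. In particular, the matrix $\subGMat^k:=\operatorname{sgn}(\dataMatTrans\projMat^k)$, with the convention $\operatorname{sgn}(0)=0$ used throughout the paper, is a valid selection from $\partial\bigonenorm{V}\big|_{V=\dataMatTrans\projMat^k}$. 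Consequently $\dataMat\subGMat^k\in\partial F(\projMat^k)$, which is precisely the ``subgradient at $\projMat^k$'' required in \eqref{alg:condg-u_prototype}.

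With this choice in hand, the CondGradU subproblem becomes $\projMat^{k+1}\in\arg\max_{\projMat\in\st{d,K}}\innerprod{\dataMat\subGMat^k}{\projMat}$. Proposition \ref{prop:polar_max} identifies the maximizer set of such a linear functional over $\st{d,K}$ with the $\projMat$-factors of any polar decomposition of $\dataMat\subGMat^k$, i.e.\ $\projMat^{k+1}=\operatorname{PD}(\dataMat\subGMat^k)=\operatorname{PD}\bigxiaokuohao{\dataMat\operatorname{sgn}(\dataMatTrans\projMat^k)}$, which is the NGA update \eqref{alg:fixed_point_iteration_l1_pca} verbatim. Stringing these three identifications together proves the proposition.

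I do not anticipate a genuine obstacle; the statement is essentially a bookkeeping exercise once the three facts (convexity of $F$, compactness of $\st{d,K}$, and the polar-decomposition characterization of linear maximizers over the Stiefel manifold) are assembled. The only point requiring a little care is the description of $\partial\bigonenorm{\cdot}$ at points with zero entries, which must be invoked to ensure that the entrywise-sign matrix really is a legitimate subgradient selection and not merely a one-sided directional derivative; this is resolved by the standard subdifferential formula above together with the paper's convention $\operatorname{sgn}(0)=0$.
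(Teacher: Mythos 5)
Your argument is correct and follows the same route as the paper, which simply observes that $\subGMat^k=\operatorname{sgn}(\dataMatTrans\projMat^k)$ is a valid selection from $\partial\bigonenorm{V}\big|_{V=\dataMatTrans\projMat^k}$ (so that $\dataMat\subGMat^k\in\partial F(\projMat^k)$) and invokes Proposition \ref{prop:polar_max} to identify the linear maximization over $\st{d,K}$ with the polar-decomposition update. You spell out the subdifferential formula at zero entries more explicitly than the paper does, but the substance is identical.
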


\subsubsection{NGA \eqref{alg:fixed_point_iteration_l1_pca} as an alternating maximization} We now show that NGA \eqref{alg:fixed_point_iteration_l1_pca} can be regarded as an alternating maximization method. To see this,   first using the fact that the $L_1$-norm is dual to  the $L_\infty$-norm, i.e., $\bigonenorm{Y} =\max_{ \bignorm{Z}_{\infty}\leq 1  }\innerprod{Y}{Z} $, one can rewrite the objective function of $L_1$-PCA \eqref{prob:l1_pca}  as 
\[
\bigonenorm{\dataMat^{\top}\projMat} = \max_{\biginfnorm{\subGMat}\leq 1}\innerprod{\dataMatTrans\projMat}{\subGMat}.
\]
Thus $L_1$-norm PCA can be equivalently formulated as the following bilinear maximization problem:
\begin{align}
\label{prob:l1_pca_bilinear}
&\max_{\projMat,\subGMat}~F(\projMat,\subGMat):=  \innerprod{\dataMat^{\top}\projMat}{\subGMat} ~{\rm s.t.}~\projMat \in\st{d,K}, \biginfnorm{\subGMat}\leq 1.
\end{align}
Applying the alternating maximization method, one may alternatively compute
\begin{align}
\label{alg:am_prototype}
\begin{split}
&  {\rm 1.~Compute}~ \subGMat^{k} \in \arg\max_{\biginfnorm{\subGMat}\leq 1}\innerprod{\dataMatTrans\projMat^k}{\subGMat}\\
& {\rm 2. ~Compute}~\projMat^{k+1} \in \arg\max_{\projMat\in\st{d,K}}\innerprod{\dataMat\subGMat^{k}}{\projMat}.
\end{split}
\end{align}
Clearly,   $\subGMat^{k} = \operatorname{sgn}\bigxiaokuohao{\dataMatTrans\projMat^k}$ and $\projMat^{k+1} = \operatorname{PD}\bigxiaokuohao{\dataMat\subGMat^k}$ respectively solve the two subproblems above. The above observations  show that:
\begin{proposition}
	NGA of \cite{nie2011robust}  is a special instance of  the alternating maximization. 
\end{proposition}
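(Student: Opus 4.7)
The plan is to verify that the two subproblems of the alternating maximization scheme \eqref{alg:am_prototype}, applied to the bilinear reformulation \eqref{prob:l1_pca_bilinear}, produce exactly the NGA update \eqref{alg:fixed_point_iteration_l1_pca}. The setup is already in place: the excerpt has written $L_1$-norm PCA as the bilinear problem by invoking $L_1$-$L_\infty$ duality $\bigonenorm{Y} = \max_{\biginfnorm{Z}\leq 1}\innerprod{Y}{Z}$, so what remains is to solve each block subproblem in closed form.

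First, I would fix $\projMat^k$ and look at the $\subGMat$-subproblem $\max_{\biginfnorm{\subGMat}\leq 1} \innerprod{\dataMatTrans\projMat^k}{\subGMat}$. This decouples entrywise, and by the standard fact that $\max_{|s|\leq 1} as = |a|$ attained at $s=\operatorname{sgn}(a)$ (with any value in $[-1,1]$ when $a=0$), the choice $\subGMat^k = \operatorname{sgn}(\dataMatTrans\projMat^k)$ is a maximizer. Next, I would fix $\subGMat^k$ and look at the $\projMat$-subproblem $\max_{\projMat\in \st{d,K}} \innerprod{\dataMat\subGMat^k}{\projMat}$. This is precisely the prototypical problem \eqref{prob:key_subprob_prototype}, and Proposition \ref{prop:polar_max} identifies the maximizer as a $\projMat$-factor of the polar decomposition of $\dataMat\subGMat^k$, i.e., $\projMat^{k+1} = \operatorname{PD}(\dataMat\subGMat^k)$.

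Chaining these two updates gives $\projMat^{k+1} = \operatorname{PD}\bigxiaokuohao{\dataMat \operatorname{sgn}(\dataMatTrans\projMat^k)}$, which is exactly the fixed-point formulation \eqref{alg:fixed_point_iteration_l1_pca} of NGA. Thus NGA realizes the alternating maximization scheme \eqref{alg:am_prototype} applied to \eqref{prob:l1_pca_bilinear}, establishing the proposition.

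I do not anticipate any real obstacle here: the statement is essentially a bookkeeping observation, and both subproblem solutions are either elementary ($L_\infty$-ball maximization) or already available in the paper (Proposition \ref{prop:polar_max}). The only mild care point is that neither subproblem has a unique maximizer in general — the $\subGMat$-subproblem is ambiguous whenever a coordinate of $\dataMatTrans\projMat^k$ vanishes, and the $\projMat$-subproblem is ambiguous when $\dataMat\subGMat^k$ is column-rank-deficient (see Remark \ref{rmk:pd_error_estimation}) — but this is harmless because \eqref{alg:am_prototype} is stated with set-membership ``$\in\arg\max$'' and \eqref{alg:fixed_point_iteration_l1_pca} is interpreted through the convention, fixed at the end of Section~\ref{sec:pd}, that $\operatorname{PD}(\cdot)$ denotes any valid $\projMat$-factor.
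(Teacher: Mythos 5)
Your proposal is correct and follows essentially the same route as the paper: the $L_1$--$L_\infty$ duality yields the bilinear reformulation, the $\subGMat$-subproblem is solved entrywise by $\operatorname{sgn}(\dataMatTrans\projMat^k)$, and the $\projMat$-subproblem is solved via Proposition \ref{prop:polar_max} as $\operatorname{PD}(\dataMat\subGMat^k)$, which chains to the fixed-point form \eqref{alg:fixed_point_iteration_l1_pca}. Your extra remark on the non-uniqueness of the maximizers is a sensible care point that the paper also acknowledges (via the set-membership notation and Setting \ref{set:1}), so nothing is missing.
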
   

\subsection{Finite-step convergence of NGA} \label{sec:finite_step_conv_fpi}

\subsubsection{Convergence results from the conditional (sub)gradient perspective} We first establish the subsequential convergence of CondGradU \eqref{alg:subgrad_pd}.     In the sequel, by \emph{subsequential convergence}, we mean that every limit point of the sequence generated by an algorithm satisfies certain optimality conditions. 

We shall remark that \cite{luss2013conditional} had established the subsequential convergence   for CondGradU \eqref{alg:condg-u_prototype} when the objective function is continuously differentiable; however, this result  cannot be applied, due to the non-differentiability of $\bigonenorm{\dataMatTrans\projMat}$. Nevertheless, by exploring the structure of $L_1$-norm PCA, subsequential convergence can still be obtained. 
For convenience we  may write $\dataMat = [\mathbf x_1,\ldots,\mathbf x_n]$ with $\mathbf x_i\in\mbr{d}$ and $\projMat = [\mathbf u_1,\ldots,\mathbf u_K]$ with $\mathbf u_i\in\mbr{d}$ in the sequel. 
\begin{proposition}[Subsequential convergence of CondGradU \eqref{alg:subgrad_pd}]
	\label{prop:subsequential_conv}
	Let $\{ \projMat^k \}$ be generated by CondGradU \eqref{alg:subgrad_pd}.   Then   $F(\projMat^k) $ is monotonically increasing, and  every limit point satisfies the FOC \eqref{eq:FOC}. 
\end{proposition}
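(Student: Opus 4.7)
My plan is to first establish monotonic increase of $F(\projMat^k)$ cleanly from the two ingredients already in hand: convexity of $F$ (which supplies a subgradient inequality) and Proposition \ref{prop:polar_max} (which says the PD update $\projMat^{k+1}=\operatorname{PD}(\dataMat\subGMat^k)$ is a maximizer of $\innerprod{\dataMat\subGMat^k}{\cdot}$ over $\st{d,K}$). Concretely, since $\dataMat\subGMat^k\in\partial F(\projMat^k)$, convexity gives $F(\projMat^{k+1})-F(\projMat^k)\geq\innerprod{\dataMat\subGMat^k}{\projMat^{k+1}-\projMat^k}$, and Proposition \ref{prop:polar_max} makes the right-hand side nonnegative. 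Since $F$ is continuous on the compact set $\st{d,K}$, the sequence $\{F(\projMat^k)\}$ is bounded, hence converges to some $F^*$, and successive differences $\innerprod{\dataMat\subGMat^k}{\projMat^{k+1}-\projMat^k}$ tend to $0$.

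For subsequential convergence, let $\projMat^*$ be any limit point, with $\projMat^{k_j}\to\projMat^*$. Because $\subGMat^{k_j}$ has entries in $[-1,1]$, I extract a further subsequence (re-indexed by $j$) so that $\subGMat^{k_j}\to\subGMat^*$. The standard closedness of the $L_1$-subdifferential mapping then gives $\subGMat^*\in\partial\bignorm{V}_1\big|_{V=\dataMatTrans\projMat^*}$: for entries where $(\dataMatTrans\projMat^*)_{ij}\neq 0$, eventually $\subGMat^{k_j}_{ij}=\operatorname{sgn}((\dataMatTrans\projMat^{k_j})_{ij})=\operatorname{sgn}((\dataMatTrans\projMat^*)_{ij})$, and for entries where $(\dataMatTrans\projMat^*)_{ij}=0$ the limit still lies in $[-1,1]$. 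Extracting one more subsequence, $\projMat^{k_j+1}\to\projMat^{**}\in\st{d,K}$, and $F(\projMat^{**})=F(\projMat^*)=F^*$ by continuity.

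The key step, which I expect to be the main obstacle (since the PD $U$-factor is not continuous when $\dataMat\subGMat^k$ loses rank, so one cannot simply pass to the limit in $\projMat^{k+1}=\operatorname{PD}(\dataMat\subGMat^k)$), is to show that $\projMat^*$ itself, rather than $\projMat^{**}$, is a FOC point with certificate $\dataMat\subGMat^*$. I handle it by passing to the limit in the variational inequality instead of the PD formula: from $\projMat^{k_j+1}\in\arg\max_{Z\in\st{d,K}}\innerprod{\dataMat\subGMat^{k_j}}{Z}$, taking $j\to\infty$ yields $\projMat^{**}\in\arg\max_{Z\in\st{d,K}}\innerprod{\dataMat\subGMat^*}{Z}$. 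Applying the subgradient inequality with $\dataMat\subGMat^*\in\partial F(\projMat^*)$ at the two points of equal objective value $F^*$ gives $0=F(\projMat^{**})-F(\projMat^*)\geq\innerprod{\dataMat\subGMat^*}{\projMat^{**}-\projMat^*}$, while the maximization property of $\projMat^{**}$ gives the reverse inequality. Hence $\innerprod{\dataMat\subGMat^*}{\projMat^*}=\innerprod{\dataMat\subGMat^*}{\projMat^{**}}$, so $\projMat^*$ is also a maximizer of $\innerprod{\dataMat\subGMat^*}{\cdot}$ over $\st{d,K}$. By the equivalent form \eqref{prob:FOC_equivalent_max} of FOC, this is exactly \eqref{eq:FOC} at $\projMat^*$ with certificate $W=\dataMat\subGMat^*$, completing the proof.
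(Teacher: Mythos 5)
Your proof is correct and rests on exactly the same ingredients as the paper's: monotonicity of $F(\projMat^k)$ from the subgradient inequality combined with the maximization property of the PD update, boundedness giving $F(\projMat^{k+1})-F(\projMat^k)\to 0$, extraction of subsequences for both $\projMat^{k}$ and $\subGMat^{k}$, and the entrywise closedness argument showing $\subGMat^*\in\partial\bigonenorm{V}\big|_{V=\dataMatTrans\projMat^*}$. The only difference is organizational: the paper passes to the limit directly in the inequality $\innerprod{\dataMat\subGMat^{k}}{\projMat-\projMat^{k}}\leq\innerprod{\dataMat\subGMat^{k}}{\projMat^{k+1}-\projMat^{k}}\leq F(\projMat^{k+1})-F(\projMat^{k})\to 0$ for each fixed $\projMat\in\st{d,K}$, which delivers the variational inequality at $\projMat^*$ without ever introducing the auxiliary limit $\projMat^{**}$ of $\projMat^{k_j+1}$; your detour through $\projMat^{**}$ and the equality of objective values is valid but not needed.
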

\begin{proof}
	$\bigdakuohao{F(\projMat^k)}$ being monotonically increasing follows from \cite{luss2013conditional,nie2011robust}. To prove the second claim, since $\bigdakuohao{F(\projMat^k)}$ is bounded over $\st{d,K}$, we have that $F(\projMat^{k+1}) - F(\projMat^k)\rightarrow 0$. On the other hand, the definition of $\projMat^{k+1}$ shows that $\innerprod{\dataMat\subGMat^k}{\projMat^{k+1}}\geq \innerprod{\dataMat\subGMat^k}{\projMat},\forall\projMat\in\st{d,K}$, and so
	\begin{align}
	\label{eq:proof_subsequential_conv_1}
	\begin{split}
	&\innerprod{\dataMat\subGMat^k}{\projMat - \projMat^k}\leq \innerprod{\dataMat\subGMat^k}{\projMat^{k+1} - \projMat^k} \\
	\leq &F(\projMat^{k+1}) - F(\projMat^k) \rightarrow 0,~\forall\projMat\in\st{d,K},
	\end{split}
	\end{align}
	where the second inequality follows from the convexity of $F(\projMat)$, the definition of subdifferential \eqref{eq:subdifferential},  and $\dataMat\subGMat^k\in\partial F(\projMat^k)$. Since $\{ \projMat^k\}\in \st{d,K}$ is bounded, limit points exist.   Let $\projMat^*$ be a limit point of $\{\projMat^k \}$ and assume that $\{ \projMat^{k_l}  \}\rightarrow \projMat^*$ as $l\rightarrow\infty$.  We then consider $\{\subGMat^{k_l}  \}$, which is also bounded.  Passing to a subsequence of $\{ \subGMat^{k_l} \}$ if necessary, we can without loss of generality also assume that $\{\subGMat^{k_l} \}$ itself   converges and denote the limit   as $\subGMat^*$.  Thus,  in \eqref{eq:proof_subsequential_conv_1},  letting $k=k_l$ and letting $l\rightarrow\infty$, we obtain 
	\begin{align}
	\label{eq:proof_subsequential_conv_2}
	\innerprod{\dataMat\subGMat^*}{\projMat - \projMat^*} \leq 0,~\forall \projMat\in\st{d,K}.
	\end{align}
	It remains to show that $\subGMat^* \in \partial \bigonenorm{V}\Big |_{V=\dataMatTrans\projMat^*}$, i.e., $\subGMat^*_{ij}\in \partial\bigjueduizhi{ V_{ij}} \Big |_{V_{ij} = (\dataMatTrans\projMat^*)_{ij} = \mathbf x_i^{\top}\mathbf u^*_j}$ for each $i,j$. 
	
	We devide the proof into three cases. If $\mathbf x_i^{\top}\mathbf u_j^*>0$, then since $\mathbf u^{k_l}_j\rightarrow \mathbf u^*_j$, there exists a large enough interger $l_0$ such that whenever $l>l_0$, $\mathbf x_i^{\top}\mathbf u^{k_l}_j>0$, and so $\subGMat^{k_l}_{ij} = \partial \bigjueduizhi{ V_{ij} } \Big |_{V_{ij}=\mathbf x_i^{\top}\mathbf u^k_j  } = 1$; as a result, we conclude that $\subGMat^*_{ij} = \lim_{l\rightarrow\infty}\subGMat^{k_l}=1$, and hence $\subGMat^*_{ij}\in \partial\bigjueduizhi{ V_{ij}} \Big |_{V_{ij} =   \mathbf x_i^{\top}\mathbf u^*_j}$. 
	
	If $\mathbf x_i^{\top}\mathbf u^*_j <0$, using similar argument we get $\subGMat^*_{ij} = -1\in \partial\bigjueduizhi{ V_{ij}} \Big |_{V_{ij} =   \mathbf x_i^{\top}\mathbf u^*_j}$. If $\mathbf x^{\top}_i\mathbf u^*_j=0$, then $\partial \bigjueduizhi{V_{ij}} \Big |_{V_{ij}=\mathbf x^{\top}_i\mathbf u^*_j} = [-1,1]$, while we always have $\subGMat^{k_l}_{ij} \in [-1,1]$, and so $\subGMat^{k_l}_{ij}\rightarrow \subGMat^{*}_{ij} \in \partial \bigjueduizhi{V_{ij}} \Big |_{V_{ij}=\mathbf x^\top_i\mathbf u^*_j}$. 
	
	As a result, we obtain	$\subGMat^* \in \partial \bigonenorm{V}\Big |_{V=\dataMatTrans\projMat^*}$, which together with \eqref{eq:proof_subsequential_conv_2} is exactly the FOC \eqref{eq:FOC}. 
\end{proof}

We then establish the finite-step convergence. This is only specialized to   NGA $\projMat^{k+1} = \operatorname{PD}\bigxiaokuohao{ \dataMat \operatorname{sgn}\bigxiaokuohao{\dataMat^{\top}\projMat^k  }  }$. 
Our analysis is based on a simple observation: If $\subGMat^k = \operatorname{sgn}\bigxiaokuohao{\dataMatTrans\projMat^k}$, then there exists at most $3^{nK}$ possible choices of $\subGMat^k$, which is finite, and so the number of possible matrices $\dataMat\subGMat^k$ is also finite. This   would then give finitely many possible $\projMat^k$ if it can be uniquely determined by $\subGMat^k$. We have the following results.
\begin{theorem}[Finite-step convergence of NGA \eqref{alg:fixed_point_iteration_l1_pca}]
	\label{th:global_conv}
	Let $\{ \projMat^k \}$ be generated by NGA \eqref{alg:fixed_point_iteration_l1_pca}: $\projMat^{k+1} = \operatorname{PD}\bigxiaokuohao{ \dataMat \operatorname{sgn}\bigxiaokuohao{\dataMat^{\top}\projMat^k  }  }$ (interpreted as   CondGradU \eqref{alg:subgrad_pd}). If $\rank{\dataMat\subGMat^k} =K$ for all $k$, then after finitely many steps,   the algorithm finds a point $\projMat^*$ which is a FOC point of the form \eqref{eq:FOC}.
\end{theorem}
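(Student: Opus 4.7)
The plan is to combine three ingredients: (i) the finite range of the sign matrix $\subGMat^k$; (ii) the uniqueness of the polar factor under the full-rank assumption; and (iii) a quantitative strengthening of the monotone increase via Lemma \ref{lem:polar_max_sufficient}.

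First I would observe that since $\subGMat^k = \operatorname{sgn}(\dataMatTrans \projMat^k)$ takes values in $\{-1,0,1\}^{n\times K}$, it can assume at most $3^{nK}$ different values, so $\dataMat\subGMat^k$ also takes at most that many values. Under the assumption $\rank{\dataMat\subGMat^k}=K$, Proposition \ref{prop:polar_max} (together with Theorem \ref{th:polar_dec}) shows that the $U$-factor $\operatorname{PD}(\dataMat\subGMat^k)$ is uniquely determined by $\subGMat^k$; hence, from $k\geq 1$ onward, the iterate $\projMat^k$ lies in a finite set $\mathcal{V}\subset \st{d,K}$. Let $\delta := \min\{\|V-V'\|_F : V,V'\in\mathcal{V},\ V\neq V'\}$, which is strictly positive.

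Next I would quantify the displacement $\|\projMat^{k+1}-\projMat^k\|_F$. As in the proof of Proposition \ref{prop:subsequential_conv}, the monotone bounded sequence $\{F(\projMat^k)\}$ is Cauchy and
\[
F(\projMat^{k+1}) - F(\projMat^k) \;\geq\; \innerprod{\dataMat\subGMat^k}{\projMat^{k+1}-\projMat^k} \;\geq\; \frac{\lambda_k}{2}\bigfnormsquare{\projMat^{k+1}-\projMat^k},
\]
where the first inequality uses convexity of $F$ and $\dataMat\subGMat^k\in\partial F(\projMat^k)$, and the second is Lemma \ref{lem:polar_max_sufficient} applied to $\projMat^{k+1}=\operatorname{PD}(\dataMat\subGMat^k)$; here $\lambda_k$ is the smallest eigenvalue of the $H$-factor in the PD of $\dataMat\subGMat^k$. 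The key point is that $\subGMat^k$ ranges over a finite set, and under the full-rank hypothesis $\lambda_k>0$ for each admissible choice, so the minimum $\lambda_{\min}:=\min_k \lambda_k$ is itself strictly positive. Combined with $F(\projMat^{k+1})-F(\projMat^k)\to 0$, this forces $\|\projMat^{k+1}-\projMat^k\|_F\to 0$.

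Finally, for $k$ large enough we must have $\|\projMat^{k+1}-\projMat^k\|_F<\delta$; since both iterates lie in the finite set $\mathcal{V}$, this forces $\projMat^{k+1}=\projMat^k=:\projMat^*$. Then $\subGMat^{k+1}=\operatorname{sgn}(\dataMatTrans\projMat^*)=\subGMat^k=:\subGMat^*$, so the iteration is stationary from that step onward. The identity $\projMat^* = \operatorname{PD}(\dataMat\subGMat^*)$ with $\subGMat^* \in \partial\bigonenorm{V}|_{V=\dataMatTrans\projMat^*}$ is precisely the fixed-point form \eqref{eq:FOC_fpf} of the FOC. The main obstacle I anticipate is securing the uniform lower bound on the positive $\lambda_k$'s, which is resolved cleanly by the pigeonhole observation that only finitely many sign patterns $\subGMat^k$ can ever appear; everything else is an elementary combination of the polar-decomposition error estimate and the finite-set argument.
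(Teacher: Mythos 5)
Your proposal is correct and follows essentially the same route as the paper's proof: finiteness of the sign patterns $\subGMat^k$, uniqueness of the polar $U$-factor under the full-rank assumption, a uniform positive lower bound $\lambda$ on the smallest eigenvalues of the $H$-factors, and the sufficient-increase inequality from Lemma \ref{lem:polar_max_sufficient} combined with $F(\projMat^{k+1})-F(\projMat^k)\to 0$. The only (minor, and arguably cleaner) difference is that you conclude stationarity directly from the minimal separation $\delta$ of the finite iterate set, whereas the paper first invokes the closedness and connectedness of the set of limit points before using finiteness to collapse it to a single point.
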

\begin{proof}
	For any $k$, write $\dataMat\subGMat^k = \projMat^{k+1}H^{k+1}$ according to PD, where $H^k\in\PSD{K\times K}$. Then Theorem \ref{th:polar_dec} tells us that $H^k$ is  uniquely determined by $\dataMat\subGMat^k$.	Based on the discussions above this theorem, the number of possible $\dataMat\subGMat^k$'s is finite, and so the number of possible $H^k$'s is also finite. Thus we can define $\lambda:= \min_{k} \lambda_{\min}(H^k)$. Since $\rank{\dataMat\subGMat^k} =K$, all the $H^k\in\PSDD{K\times K}$ and so $\lambda>0$. Using Lemma \ref{lem:polar_max_sufficient} and the definition of $\lambda$, we have
	\begin{align}
	\label{eq:proof_global_conv_1}
	\begin{split}
	\frac{\lambda}{2}\bigfnormsquare{\projMat^{k+1}-\projMat^k} &\leq \innerprod{\dataMat\subGMat^k}{\projMat^{k+1}-\projMat^k} \\ 
	&\leq  F(\projMat^{k+1}) - F(\projMat^k),
	\end{split}
	\end{align}
	which together with $F(\projMat^{k+1})-F(\projMat^k)\rightarrow 0 $ gives that $\bigfnorm{\projMat^{k+1}-\projMat^k}\rightarrow 0$. Since $\{\projMat^k  \}$ is bounded, it follows from \cite[Theorem 8.3.9]{facchinei2007finite} that the set of limit points of $\{\projMat^k \}$ is closed and connected. 
	
	On the other hand, Proposition \ref{prop:polar_max} (or Lemma \ref{lem:polar_max_sufficient}) shows that every $\dataMat\subGMat^k$ gives a unique $\projMat^{k+1}$ provided that $\rank{\dataMat\subGMat^k}=K$. Thus the number of possible $\projMat^k$'s is also finite, which implies that the set of limit points of $\{\projMat^k \}$ is   finite and discrete. Taking the above discussions together, we conclude that there exists only one limit point for $\{\projMat^k  \}$, termed as $\projMat^*$, with $\projMat^k\rightarrow\projMat^*$. $\projMat^*$ being a FOC \eqref{eq:FOC} follows from Proposition \ref{prop:subsequential_conv}. In fact, more can be obtained:   using the facts that $F(\projMat^{k+1})-F(\projMat^k)\rightarrow 0 $ and $\{\projMat^k  \}$ being finite again, we can also conclude from \eqref{eq:proof_global_conv_1} that there is a $k_0$, such that when $k\geq k_0$,  $\bigfnorm{\projMat^k - \projMat^{k+1}} =0$. Thus $\projMat^{k_0} = \projMat^{k_0+1}=\cdots = \projMat^*$, i.e., after finitely many steps, the algorithm finds a FOC point.   
\end{proof}
\begin{remark}
	We discuss the reality of the assumption $\rank{\dataMat\subGMat^k}=K$  in Theorem \ref{th:global_conv}. Recall that $\dataMat\in\mbr{d\times n}$ and $\subGMat^k\in\mbr{n\times K}$. If $d\geq n\geq K$, and if $\rank{\dataMat}=n$ and $\rank{\subGMat^k}=K$, then $\rank{\dataMat\subGMat^k}=K$. Since $n$ is the number of samples, $d$ is the dimension of each sample, and $K$ is the projected dimension, in practice usually $d\geq n\geq K$. Next, it is known that   a generic matrix $A\in\mbr{p\times q}$ satisfies $\rank{A}=\min\{p,q \}$. Thus $\rank{\dataMat}=n$ generically. However, if the data $\{ \mathbf x_i \}_{i=1}^n$ are centralized, then $\rank{\dataMat}\leq K-1$. Nevertheless, we have observed from extensive simulations that even if $\rank{\dataMat}\leq K-1$,   there still hold $\rank{\dataMat\subGMat^k}=K$ (and $\rank{\subGMat^k}=K$)  for each $k$.  Thus one should not consider $\rank{\dataMat\subGMat^k}=K$ as a stringent assumption.  In particular, we show in the following that this assumption can be removed when $K=1$. 
\end{remark}
{\bf $K=1$ case}  Now,	consider  the special case that $K=1$ in $L_1$-norm PCA \eqref{prob:l1_pca}, namely, the model \eqref{prob:l1_pca_K1}, where the algorithm is formulated in \eqref{alg:fixed_point_l1_pca_K_1}. 
In this particular case the finite-step convergence holds without any assumption. 
\begin{theorem}
	\label{th:global_conv_K_1}
	Let $K=1$ and let $\bigdakuohao{ \mathbf u^k }$ be generated by the scheme \eqref{alg:fixed_point_l1_pca_K_1}. Choose   an initializer $\mathbf u^0$     such that $\dataMatTrans\mathbf u^0\neq 0$. Then the finite-step convergence results in Theorem \ref{th:global_conv} apply. 
\end{theorem}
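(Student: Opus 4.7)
The plan is to reduce everything to verifying that the full-column-rank hypothesis $\operatorname{rank}(\dataMat\subGMat^k)=K$ of Theorem \ref{th:global_conv} is automatically satisfied along the whole trajectory when $K=1$ and $\dataMatTrans\mathbf u^0\neq\mathbf 0$; once that is in hand, the proof of Theorem \ref{th:global_conv} transplants verbatim. Specialized to $K=1$, the sign variable is the vector $\mathbf s^k=\operatorname{sgn}(\dataMatTrans\mathbf u^k)\in\{-1,0,1\}^n$, the PD step simplifies via Remark \ref{rmk:pd_n1} to $\mathbf u^{k+1}=\dataMat\mathbf s^k/\|\dataMat\mathbf s^k\|$, and the condition $\operatorname{rank}(\dataMat\mathbf s^k)=1$ is precisely the nondegeneracy statement $\dataMat\mathbf s^k\neq\mathbf 0$, which is also what makes the iteration well-defined.

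To propagate this nondegeneracy I would first record the sandwich
\[
F(\mathbf u^k)=\langle\mathbf u^k,\dataMat\mathbf s^k\rangle\leq\|\mathbf u^k\|\,\|\dataMat\mathbf s^k\|=\|\dataMat\mathbf s^k\|=\langle\mathbf u^{k+1},\dataMat\mathbf s^k\rangle\leq F(\mathbf u^{k+1}),
\]
in which the first equality uses $\mathbf s^k=\operatorname{sgn}(\dataMatTrans\mathbf u^k)$, Cauchy--Schwarz and $\|\mathbf u^k\|=1$ give the first inequality, the middle equality is the definition of $\mathbf u^{k+1}$, and the final inequality is $F(\mathbf u^{k+1})=\|\dataMatTrans\mathbf u^{k+1}\|_1\geq\langle\dataMatTrans\mathbf u^{k+1},\mathbf s^k\rangle$ since $\mathbf s^k\in[-1,1]^n$. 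Combined with the hypothesis $F(\mathbf u^0)=\|\dataMatTrans\mathbf u^0\|_1>0$, a straightforward induction on $k$ yields $\|\dataMat\mathbf s^k\|\geq F(\mathbf u^k)\geq F(\mathbf u^0)>0$ for every $k$, so $\mathbf u^{k+1}$ is well-defined and $\operatorname{rank}(\dataMat\mathbf s^k)=1$ holds throughout the run.

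With the full-rank condition established, the remainder of the argument mirrors the proof of Theorem \ref{th:global_conv}: there are at most $3^n$ distinct sign vectors $\mathbf s^k$, hence only finitely many possible strictly positive values $\|\dataMat\mathbf s^k\|$, so $\lambda:=\min_k\|\dataMat\mathbf s^k\|>0$; Corollary \ref{col:polar_max_sufficient_n1} then gives $\tfrac{\lambda}{2}\|\mathbf u^{k+1}-\mathbf u^k\|^2\leq F(\mathbf u^{k+1})-F(\mathbf u^k)$, whose right-hand side tends to zero by the monotonicity and boundedness of $F$, so $\|\mathbf u^{k+1}-\mathbf u^k\|\to 0$; finally, since the candidate set $\{\dataMat\mathbf s^k/\|\dataMat\mathbf s^k\|\}$ for $\mathbf u^{k+1}$ is finite and discrete, $\mathbf u^k$ must be eventually constant, and Proposition \ref{prop:subsequential_conv} certifies that the stopping point is a FOC point. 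The only step genuinely beyond Theorem \ref{th:global_conv} — and the single mild obstacle — is the inductive maintenance of $\dataMat\mathbf s^k\neq\mathbf 0$ from the initial assumption $\dataMatTrans\mathbf u^0\neq\mathbf 0$; the rest is a straight rereading of the previous proof.
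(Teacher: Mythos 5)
Your proposal is correct and follows essentially the same route as the paper: both arguments derive the uniform lower bound $\|\dataMat\mathbf s^k\|\geq F(\mathbf u^k)\geq F(\mathbf u^0)>0$ from the monotone increase of the objective (your Cauchy--Schwarz step is just the maximizer property of $\mathbf u^{k+1}$ in disguise), then invoke Corollary \ref{col:polar_max_sufficient_n1} and the finiteness of the candidate set to conclude as in Theorem \ref{th:global_conv}. The only cosmetic difference is that the paper uses the constant $c=\bigonenorm{\dataMatTrans\mathbf u^0}$ directly as the modulus in the sufficient-increase inequality, whereas you take $\lambda=\min_k\|\dataMat\mathbf s^k\|$; both are valid.
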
  
\begin{proof}
	Let $c:= \bigonenorm{\dataMatTrans\mathbf u^0}>0$.	We first show that $\bigfnorm{\dataMat\mathbf s^k}\geq c $ for all $k$. By the definitions of $\mathbf u^{k+1}$ and $\mathbf s^k$ in \eqref{alg:fixed_point_l1_pca_K_1}, we have
	\begin{align*}
	\bigfnorm{\dataMat\mathbf s^k} &= \innerprod{\dataMat\mathbf s^k}{\frac{\dataMat\mathbf s^k }{\bigfnorm{\dataMat\mathbf s^k}   }} = \innerprod{\dataMat\mathbf s^k}{\mathbf u^{k+1}} \\ &\geq \innerprod{\dataMat\mathbf s^k}{\mathbf u^k} = \innerprod{\mathbf s^k}{\dataMatTrans\mathbf u^k} \\& = \bigonenorm{\dataMatTrans\mathbf u^k}, 
	\end{align*} 
	where the first inequality follows from that $\mathbf u^{k+1}$ maximizes $\max_{\|\mathbf u\|=1}\innerprod{\dataMat\mathbf s}{\mathbf u}$. 
	Since \eqref{alg:fixed_point_l1_pca_K_1} is the $K=1$ case of the scheme \eqref{alg:fixed_point_iteration_l1_pca}, Proposition \ref{prop:subsequential_conv} shows that $\{ \bigonenorm{\dataMatTrans\mathbf u^k} \}$ is monotonically increasing, and so we get that $\bigfnorm{\dataMat\mathbf s^k}\geq \bigonenorm{\dataMatTrans\mathbf u^k} \geq \bigonenorm{\dataMatTrans\mathbf u^0} = c>0$. Next, using Corollary \ref{col:polar_max_sufficient_n1}, we   have
	\begin{align*}
	F(\mathbf u^{k+1}) - F(\mathbf u^k)  &= \innerprod{\dataMat\mathbf s^k}{\mathbf u^{k+1}  }- \innerprod{\dataMat\mathbf s^k}{  \mathbf u^k}  \\
	&\geq \frac{c}{2}\bigfnorm{\mathbf u^{k+1}-\mathbf u^k}^2,~\forall k,
	\end{align*}
	The remaining argument is similar to Theorem \ref{th:global_conv}. 
\end{proof}

We  then turn to   the perspective of alternating maximization, based on  which we can remove the full rank assumption and obtain an   upper bound on the number of steps.

\subsubsection{Convergence results from the alternating maximization perspective} 
Unlike the previous part, we do not make any assumption here. 
We have the following results that give an upper bound on the number of steps, leading to a stronger result with less assumptions than Theorem \ref{th:global_conv}.  First recall the formulation \eqref{prob:l1_pca_bilinear} and that  $\subGMat^{k} = \operatorname{sgn}\bigxiaokuohao{\dataMatTrans\projMat^k}$ and $\projMat^{k+1} = \operatorname{PD}\bigxiaokuohao{\dataMat\subGMat^k}$ respectively solve the two subproblems in \eqref{alg:am_prototype}.  The idea in this part is simple: the gap between $F(\projMat^{k+1},\subGMat^{k+1})$ and $F(\projMat^{k+1},\projMat^k)$ can be explicitly bounded and the possible $\subGMat^k$ (and even $\projMat^k$) are finite; then the number of steps can be estimated by these two observations. Before stating the results, we make the following setting in the sequel and discuss it in Remark \ref{rmk:setting}. 
\begin{setting}\label{set:1}
	We make the setting that in polar decomposition,  we always select the same $\projMat$-factor for the same input matrix.  
\end{setting}

Recalling that $F^{\max}$ represents the global maximum of $L_1$-norm PCA, we have:
\begin{theorem}[Finite-step convergence of NGA \eqref{alg:fixed_point_iteration_l1_pca}]
	\label{th:finite_conv_original_alg_fixed_point_iteration}
	Let $\{ \projMat^k, \subGMat^k \}$ be generated by  NGA \eqref{alg:fixed_point_iteration_l1_pca} (interpreted as an alternating maximization \eqref{alg:am_prototype}
	), where the initializer $(\projMat^0,\subGMat^0)$ is chosen such that $F(\projMat^0,\subGMat^0)> 0$. With Setting \ref{set:1}, 
	the following statements hold: 
	\begin{enumerate}
		\item There exists a positive integer $k_0$, such that for all $i,j$ and $\forall k\geq k_0$, whenever $\mathbf x^\top_i\mathbf u_j^{k+1}\neq 0$, it holds that $\subGMat^{k+1}_{ij}=\subGMat^{k}_{ij}$;
		\item  There exists a constant $\tau_0>0$, such that after at most $\left\lceil   \frac{F^{\max}}{\tau_0} \right\rceil$ steps, there holds $F(\projMat^k,\subGMat^k) = F(\projMat^{k+1},\subGMat^{k+1})=F(\projMat^{k+2},\subGMat^{k+2})=\cdots$,  while before it stops, the objective value increases at least the quantity of $\tau_0$ in each iteration;  
		\item Let $k^\prime$ denote the first step that the objective value does not increase anymore. Then $\projMat^{k^\prime +1}$ is a FOC point of the form \eqref{eq:FOC_fpf}. 
	\end{enumerate}
\end{theorem}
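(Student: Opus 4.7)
The plan is to leverage two simple observations. First, $\subGMat^k = \operatorname{sgn}(\dataMatTrans\projMat^k)$ takes values in the finite set $\{-1,0,1\}^{n\times K}$, so $\dataMat\subGMat^k$ has at most $3^{nK}$ possible values; by Setting \ref{set:1}, $\projMat^{k+1}=\operatorname{PD}(\dataMat\subGMat^k)$ is then also drawn from a finite set, and hence so is the objective value $F(\projMat^k,\subGMat^k)$. Second, the alternating-maximization interpretation \eqref{alg:am_prototype} yields the chain
\begin{equation*}
F(\projMat^k,\subGMat^k) \;\leq\; F(\projMat^{k+1},\subGMat^k) \;\leq\; F(\projMat^{k+1},\subGMat^{k+1}),
\end{equation*}
whose equality cases characterize the maximizers of the two subproblems.

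For part 2, let $\mathcal{V}$ be the finite set of possible objective values and set $\tau_0 := \min\{a-b : a,b\in\mathcal{V},\ a>b\}>0$. Since $\{F(\projMat^k,\subGMat^k)\}$ is monotonically nondecreasing (Proposition \ref{prop:subsequential_conv}) and bounded above by $F^{\max}$, each strict increment must be at least $\tau_0$, so there can be at most $\left\lceil F^{\max}/\tau_0\right\rceil$ such increments, after which the objective is constant. For part 1, once $F(\projMat^{k+1},\subGMat^{k+1})=F(\projMat^k,\subGMat^k)$, both inequalities in the chain collapse to equalities; in particular the second gives that $\subGMat^k$ is also a maximizer of $\max_{\|\subGMat\|_\infty\leq 1}\innerprod{\dataMatTrans\projMat^{k+1}}{\subGMat}$. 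This problem decouples entrywise, and whenever $\mathbf x_i^\top\mathbf u_j^{k+1}\neq 0$ the entrywise maximizer is the unique value $\operatorname{sgn}(\mathbf x_i^\top\mathbf u_j^{k+1})=\subGMat^{k+1}_{ij}$, forcing $\subGMat^k_{ij}=\subGMat^{k+1}_{ij}$.

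For part 3, apply the same equality analysis one step later: the collapse of the chain at iteration $k'+1$ gives $F(\projMat^{k'+2},\subGMat^{k'+1})=F(\projMat^{k'+1},\subGMat^{k'+1})$, which says that $\projMat^{k'+1}$ is also a maximizer of $\max_{\projMat\in\st{d,K}}\innerprod{\dataMat\subGMat^{k'+1}}{\projMat}$. By Proposition \ref{prop:polar_max}, $\projMat^{k'+1}$ is then a $\projMat$-factor of a PD of $\dataMat\subGMat^{k'+1}$, i.e.\ $\projMat^{k'+1}=\operatorname{PD}(\dataMat\subGMat^{k'+1})$. Since $\subGMat^{k'+1}=\operatorname{sgn}(\dataMatTrans\projMat^{k'+1})\in\partial\bigonenorm{V}\big|_{V=\dataMatTrans\projMat^{k'+1}}$ by the standard formula for the subdifferential of $|\cdot|$, this is exactly the FOC in the form \eqref{eq:FOC_fpf}.

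The substantive step is ensuring that $\tau_0$ is genuinely positive, which hinges on Setting \ref{set:1}: without a deterministic tie-breaking rule for the $\projMat$-factor when $\dataMat\subGMat^k$ is rank-deficient, different runs through the same sign matrix could in principle generate arbitrarily close but distinct values of $F$, and the finite-set-of-objective-values argument would break. Once this reduction to a finite combinatorial trajectory is in place, everything else reduces to unpacking which of the two inequalities in the sandwich becomes an equality, and I anticipate no other technical difficulty beyond bookkeeping on whether $k$ indexes the sign update or the projection update.
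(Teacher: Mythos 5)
Your proof is correct and rests on the same two pillars as the paper's: the finiteness of the combinatorial trajectory under Setting \ref{set:1}, and the alternating-maximization sandwich $F(\projMat^k,\subGMat^k)\leq F(\projMat^{k+1},\subGMat^k)\leq F(\projMat^{k+1},\subGMat^{k+1})$. The execution differs in a few respects worth recording. First, the paper's $\tau_0$ is the minimum nonzero $|\mathbf x_i^\top\mathbf u_j|$ over the iterates, read off from the explicit entrywise formula for $F(\projMat^{k+1},\subGMat^{k+1})-F(\projMat^{k+1},\subGMat^k)$ (each nonzero summand being $|\mathbf x_i^\top\mathbf u_j^{k+1}|$ or twice that), whereas yours is the minimum positive gap in the finite set of attainable objective values; your choice makes ``every strict increment is at least $\tau_0$'' immediate and even covers the case where the increase comes entirely from the $\projMat$-update, which the paper's constant handles only implicitly. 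Second, you derive items 1 and 3 from the equality cases of the sandwich rather than from the entrywise formula; for item 3 you certify FOC with the pair $(\projMat^{k'+1},\subGMat^{k'+1})$, using the collapse at iteration $k'+1$ to get $\projMat^{k'+1}=\operatorname{PD}(\dataMat\subGMat^{k'+1})$ together with $\subGMat^{k'+1}=\operatorname{sgn}(\dataMatTrans\projMat^{k'+1})\in\partial\bigonenorm{V}\big|_{V=\dataMatTrans\projMat^{k'+1}}$, while the paper uses $(\projMat^{k'+1},\subGMat^{k'})$ and only the collapse at step $k'$; your route needs the plateau to persist one extra step (supplied by item 2) but avoids the paper's case analysis on the zero entries of $\dataMatTrans\projMat^{k'+1}$. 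One caveat you share with the original: both writeups pass from ``there are at most $\left\lceil F^{\max}/\tau_0\right\rceil$ strict increments'' to ``the objective is permanently constant after that many steps,'' which tacitly assumes a plateau cannot be followed by a later strict increase (the issue is that $\subGMat^{k+1}$ may still differ from $\subGMat^k$ on entries where $\mathbf x_i^\top\mathbf u_j^{k+1}=0$); since the paper does not argue this either, it is not a defect specific to your proposal.
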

\begin{remark} \label{rmk:setting}We first add a comment on Setting \ref{set:1}. In fact, the validness of   Theorem \ref{th:finite_conv_original_alg_fixed_point_iteration} relies on the uniqueness of the $\projMat$-factor when computing PD of a given matrix. 
	However, Remark \ref{rmk:pd_error_estimation} states that if a matrix $C$ is not of full column rank, then there exist infinitely many $U$-factors in PD of $C$. Nevertheless, this will not be a trouble, as one can always chooses a fixed one in each time one computes   PD of a given matrix $C$, e.g., one only computes PD of $C$ for the first time that $C$ appears, and stores the $U$-factor;   the next time that $C$ appears again, one can directly fetch the stored $U$-factor.  Another more convenient method is to use any deterministic algorithm to comput PD that always returns the same $\projMat$-factor for the same input matrix.  
	
	Moreover, there is even no need to emphasize the setting: 
	As will be revealed in Proposition \ref{prop:no_repeat_U}, no repeated $\projMat^k$ will occur before the algorithm stops. Thus the uniqueness of the $\projMat$-factor makes sense. 
\end{remark}

For proving the theorem, we denote $\boldsymbol{U}:=\{ \projMat^k \}$ the sequence of projection matrices generated by the algorithm. Since the number of possible $\subGMat^k$ is finite, with Setting \ref{set:1}, $\boldsymbol{U}$ is a discrete set. In the sequel, we denote
\begin{align}\label{eq:tau0}
\tau_0:= \min\bigdakuohao{ \bigjueduizhi{ \mathbf x^{\top}_i\mathbf u_j     }  ~\mid ~\mathbf x^{\top}_i\mathbf u_j\neq 0, \projMat\in \boldsymbol{\projMat}     }.
\end{align}
Clearly, $\tau_0$ is well-defined and $\tau_0>0$.

\begin{proof}[Proof of Theorem \ref{th:finite_conv_original_alg_fixed_point_iteration}]
	Item 1: Recall that the algorithm is written as $\subGMat^k = \operatorname{sgn}\bigxiaokuohao{\dataMatTrans\projMat^k}$ and $\projMat^{k+1} = \operatorname{PD}\bigxiaokuohao{\dataMat\subGMat^k}$. 
	We observe for all $k$ that
	\begin{align}
	&F(\projMat^{k+1},\subGMat^{k+1}) - F(\projMat^{k+1},\subGMat^k) \\=& \innerprod{\dataMatTrans\projMat^{k+1}}{\subGMat^{k+1}} - \innerprod{\dataMatTrans\projMat^{k+1}}{\subGMat^k}\nonumber\\
	=& \sum_{ \mathbf x^{\top}_i\mathbf u^{k+1}_j\neq 0 } \bigxiaokuohao{\bigjueduizhi{  \mathbf x^{\top}_i\mathbf u^{k+1}_j} - \mathbf x^{\top}_i\mathbf u^{k+1}_j \subGMat^k_{ij}}. \nonumber
	\end{align}
	Now, for $(i,j)$ such that $\mathbf x^{\top}_{i}\mathbf u^{k+1}_j\neq 0$,  
	$\subGMat^k_{ij}$ has three possible values: $\subGMat^k_{ij}=\subGMat^{k+1}_{ij}$, $\subGMat^k_{ij}=-\subGMat^{k+1}_{ij}$, and $\subGMat^k_{ij}=0$. For the first one, $  \mathbf x^{\top}_i\mathbf u^{k+1}_j\subGMat^k_{ij} =| \mathbf x^{\top}_i\mathbf u^{k+1}_j  | $; for the second one, $   \mathbf x^{\top}_i\mathbf u^{k+1}_j\subGMat^k_{ij} = - |\mathbf x^{\top}_i\mathbf u^{k+1}_j |$; for the last one, $\mathbf x^{\top}_i\mathbf u^{k+1}_j \subGMat^k_{ij} =0,$. Thus, 
	\begin{align}
	&F(\projMat^{k+1},\subGMat^{k+1}) - F(\projMat^{k+1},\subGMat^k) \nonumber\\
	=& \sum_{ \mathbf x^{\top}_i\mathbf u^{k+1}_j\neq 0 } \bigxiaokuohao{\bigjueduizhi{  \mathbf x^{\top}_i\mathbf u^{k+1}_j} - \mathbf x^{\top}_i\mathbf u^{k+1}_j \subGMat^k_{ij}} \nonumber\\
	=& \sum_{ \mathbf x^{\top}_i\mathbf u^{k+1}_j\neq 0, \subGMat^{k+1}_{ij}+\subGMat^k_{ij}=0 } 2\bigjueduizhi{ \mathbf x^{\top}_i\mathbf u^{k+1}_j   }+ \!\!\!\!\!\!  \sum_{ \mathbf x^{\top}_i\mathbf u^{k+1}_j\neq 0, \subGMat^k_{ij}=0 }  \bigjueduizhi{ \mathbf x^{\top}_i\mathbf u^{k+1}_j   }. \label{eq:proof_finite_conv_origin_1}
	\end{align}
	Since   $F(\projMat^{k+1},\subGMat^{k+1}) - F(\projMat^{k+1},\subGMat^k)\rightarrow 0$   and    $\tau_0 $ in \eqref{eq:tau0} is a strictly positive constant, \eqref{eq:proof_finite_conv_origin_1} implies that there exists a $k_0$, such that when $k\geq k_0$, both sides of \eqref{eq:proof_finite_conv_origin_1} are exactly zero (othewise, the right hand-side is always larger than $\tau_0$, which contradicts that $F(\projMat^{k+1},\subGMat^{k+1}) - F(\projMat^{k+1},\subGMat^k)\rightarrow 0$). Therefore,  when $k\geq k_0$, the only possible value of $\subGMat^k_{ij}$ is that $\subGMat^k_{ij}=\subGMat^{k+1}_{ij}$ (when $\mathbf x^\top_i\mathbf u^{k+1}_j\neq 0$), i.e., after finitely many steps, the claim of  item 1 is true.

	Item 2: we first have that for each $k$, there must exist at least a pair ${i,j}$, such that $\mathbf x^\top_i\mathbf u^k_j \neq 0$. This is clear, because $\sum_{i,j} | \mathbf x^\top_i\mathbf u^{k}_j  | =  F(\projMat^k,\subGMat^k) \geq F(\projMat^0,\subGMat^0) >0$. Therefore, by \eqref{eq:proof_finite_conv_origin_1}, whenever $F(\projMat^{k+1},\subGMat^{k+1})$ strictly increases from $F(\projMat^{k+1},\subGMat^k)$, it increases at least 
	the quantity $|\mathbf x^\top_i\mathbf u^{k+1}_j| \geq \tau_0>0$, and so when $F(\projMat^{k+1},\subGMat^{k+1})\neq F(\projMat^k,\subGMat^k)$,
	\begin{align*}
	&	F(\projMat^{k+1},\subGMat^{k+1}) - F(\projMat^k,\subGMat^k)\\ \geq& F(\projMat^{k+1},\subGMat^{k+1}) - F(\projMat^{k+1},\subGMat^k) 	\geq  \tau_0.
	\end{align*}
	Summing it from $k=0$ to   any  $K\geq 1$, we have $K\tau_0 \leq F(\projMat^{K+1},\subGMat^{K+1}) - F(\projMat^0,\subGMat^0) \leq F^{\max}$. This inequality is valid if $K\leq \left\lfloor   \frac{F^{\max}}{\tau_0} \right\rfloor$. 
	Thus      after  at most $\left\lceil   \frac{F^{\max}}{\tau_0} \right\rceil$ steps, the objective value cannot increase anymore. 
	
	Item 3:	let $k^\prime$ denote the first step that the objective value does not increase anymore, i.e., $F(\projMat^{k^\prime},\subGMat^{k^\prime})=F(\projMat^{k^\prime+1},\subGMat^{k^\prime+1})$. We show that $(\projMat^{k^\prime+1},\subGMat^{k^\prime})$ is a FOC point. Note that   \eqref{eq:proof_finite_conv_origin_1} implies that $\subGMat^{k^\prime}_{ij} = \subGMat^{k^\prime+1}_{ij}$ if $\mathbf x^\top_{i}\mathbf u^{k^\prime}_j\neq 0$, i.e., 
	$
	\subGMat^{k^\prime}_{ij} = \subGMat^{k^\prime+1} \in \partial | V_{ij} | \Big |_{V_{ij} = \mathbf x^\top_i\mathbf u^{k^\prime+1}_j }$ when  $\mathbf x^\top_i\mathbf u^{k^\prime+1}_j\neq 0. 
	$
	On the other hand, since $\subGMat^{k^\prime}_{ij} \in [-1,1]$ in any case, when $\mathbf x^\top_i\mathbf u^{k^\prime+1}_j=0$, we must have that $\subGMat^{k^\prime}_{ij} \in \partial |V_{ij}| \Big |_{V_{ij} = \mathbf x^\top_i\mathbf u^{k^\prime+1}_j }$. As a result,
	\[
	\subGMat^{k^\prime} \in \partial \bigonenorm{V } \Big |_{V=\dataMatTrans\projMat^{k^\prime + 1}}. 
	\]
	This together with $\projMat^{k^\prime+1} = \operatorname{PD}\xiaokuohao{\dataMat\subGMat^{k^\prime}}$   demonstrates that  $\projMat^{k^\prime+1}$ satisfies the FOC point of the form \eqref{eq:FOC_fpf}.   
\end{proof}
\begin{remark}
	On can also derive an upper bound on the number of steps that only depends on the data matrix $\dataMat$. Denote $$\boldsymbol{U}^*:= \left\{ \projMat = \operatorname{PD}(\dataMat\subGMat) \mid \subGMat{\rm~is~such~that~}\subGMat_{ij}\in \{0,\pm 1 \}  \right\}.$$
	Under Setting \ref{set:1}, $\boldsymbol{U}^*$ is also a discret set. Then, similar to $\tau_0$ in \eqref{eq:tau0}, denote $\tau_*:= \min\bigdakuohao{ \bigjueduizhi{ \mathbf x^{\top}_i\mathbf u_j     }  ~\mid ~\mathbf x^{\top}_i\mathbf u_j\neq 0, \projMat\in \boldsymbol{\projMat}^*     }$. It is clear that $\tau_0\geq \tau_*$, and so  $\left\lceil   \frac{F^{\max}}{\tau_0} \right\rceil \leq   \left\lceil   \frac{F^{\max}}{\tau_*} \right\rceil $, which is essentially only dependent on $\dataMat$.  
\end{remark}

A direct conclusion of Theorem \ref{th:finite_conv_original_alg_fixed_point_iteration} is:
\begin{corollary}\label{col:f_donot_change_foc_point}
	Whenever $F(\projMat^{k+1},\subGMat^{k+1}) - F(\projMat^{k+1},\subGMat^k) = 0$,   $\projMat^{k+1}$ is a FOC point of the form \eqref{eq:FOC_fpf}.
\end{corollary}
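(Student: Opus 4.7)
The plan is to observe that the corollary is essentially a repackaging of the reasoning already used in item~3 of Theorem~\ref{th:finite_conv_original_alg_fixed_point_iteration}, so the proof amounts to applying that argument without assuming the preceding outer iteration was stationary. First, I would start from the hypothesis $F(\projMat^{k+1},\subGMat^{k+1}) - F(\projMat^{k+1},\subGMat^k) = 0$ and invoke the explicit decomposition \eqref{eq:proof_finite_conv_origin_1}, which expresses this gap as a sum of non-negative terms, each proportional to $|\mathbf{x}_i^\top \mathbf{u}_j^{k+1}|$ for indices where either $\subGMat^{k+1}_{ij}+\subGMat^k_{ij}=0$ or $\subGMat^k_{ij}=0$. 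Since the sum vanishes and every summand is non-negative, each summand must itself be zero, which forces $\subGMat^k_{ij}=\subGMat^{k+1}_{ij}$ for every $(i,j)$ with $\mathbf{x}_i^\top\mathbf{u}_j^{k+1}\neq 0$.

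Next I would certify that $\subGMat^k \in \partial \bigonenorm{V}\big|_{V=\dataMatTrans\projMat^{k+1}}$ by checking the subdifferential inclusion entry-wise. For $(i,j)$ with $\mathbf{x}_i^\top\mathbf{u}_j^{k+1}\neq 0$, the previous step gives $\subGMat^k_{ij} = \subGMat^{k+1}_{ij} = \operatorname{sgn}(\mathbf{x}_i^\top \mathbf{u}_j^{k+1})$, which is exactly $\partial|V_{ij}|$ evaluated at that nonzero point. For $(i,j)$ with $\mathbf{x}_i^\top\mathbf{u}_j^{k+1}=0$, the subdifferential $\partial|V_{ij}|$ equals $[-1,1]$, and since $\subGMat^k_{ij}=\operatorname{sgn}(\mathbf{x}_i^\top\mathbf{u}_j^k)\in\{-1,0,1\}\subseteq[-1,1]$, the inclusion holds trivially. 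Assembling both cases yields the desired global inclusion.

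Finally, recalling that the update rule of NGA~\eqref{alg:fixed_point_iteration_l1_pca} gives $\projMat^{k+1}=\operatorname{PD}(\dataMat \subGMat^k)$, the pair $(\projMat^{k+1},\subGMat^k)$ satisfies exactly the condition \eqref{eq:FOC_fpf} characterizing a FOC point, completing the proof. I do not expect any genuine obstacle here, since the heart of the argument, namely the non-negative decomposition \eqref{eq:proof_finite_conv_origin_1} and the entry-wise subdifferential verification, is already carried out in the proof of Theorem~\ref{th:finite_conv_original_alg_fixed_point_iteration}; the only care needed is to notice that the hypothesis of the corollary is precisely the hypothesis that powers that portion of the argument, so no assumption about the preceding $\subGMat$-step being stationary is required.
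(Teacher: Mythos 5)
Your proposal is correct and follows essentially the same route as the paper: the paper presents this corollary as a direct consequence of the argument in item~3 of Theorem~\ref{th:finite_conv_original_alg_fixed_point_iteration}, which is precisely the chain you reproduce (vanishing of the non-negative decomposition \eqref{eq:proof_finite_conv_origin_1} forces $\subGMat^k_{ij}=\subGMat^{k+1}_{ij}$ wherever $\mathbf x_i^\top\mathbf u_j^{k+1}\neq 0$, the entry-wise subdifferential check handles the zero entries, and $\projMat^{k+1}=\operatorname{PD}(\dataMat\subGMat^k)$ closes the loop via \eqref{eq:FOC_fpf}). Your observation that only the gap condition itself is needed, not stationarity of the preceding step, is exactly the point that makes the corollary a ``direct conclusion.''
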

Theorem \ref{th:finite_conv_original_alg_fixed_point_iteration} means that $F(\projMat^{k+1},\subGMat^{k+1}) - F(\projMat^{k+1},\subGMat^k) = 0$ can be used as a stopping criterion for NGA \eqref{alg:fixed_point_iteration_l1_pca}. However, $\bigfnorm{\subGMat^{k+1}-\subGMat^k}=0$ may not be used as the stopping criterion, as   Theorem \ref{th:finite_conv_original_alg_fixed_point_iteration} does not make sure this. The reason can be seen from the proof of Theorem \ref{th:finite_conv_original_alg_fixed_point_iteration}: if $\mathbf x^{\top}_i\mathbf u^{k+1}_j=0$, then $\subGMat^k_{ij}$ may not be equal to $\subGMat^{k+1}_{ij}$.   

Next, we remark that, there is even no need to emphasize that the algorithm works under Setting \ref{set:1}, due to the following reason:
\begin{proposition}\label{prop:no_repeat_U}
	Let $\{ \projMat^k,\subGMat^k\}$ be generated by NGA \eqref{alg:fixed_point_iteration_l1_pca} (interpreted as \eqref{alg:am_prototype}) with $F(\projMat^0,\subGMat^0)>0$.  If during the process, there are two indices $k^\prime< k^{\prime\prime}$, such that $\projMat^{k^\prime} = \projMat^{k^{\prime\prime}}$, then $\projMat^{k^\prime+1}$ is a FOC point of the form \eqref{eq:FOC_fpf}.
\end{proposition}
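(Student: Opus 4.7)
The plan is to exploit the fact that in NGA \eqref{alg:fixed_point_iteration_l1_pca} the sign block $\subGMat^k = \operatorname{sgn}(\dataMatTrans\projMat^k)$ is completely determined by $\projMat^k$, so a repetition $\projMat^{k'} = \projMat^{k''}$ automatically forces $\subGMat^{k'} = \subGMat^{k''}$. My first step would be to record this observation, which immediately yields $F(\projMat^{k'},\subGMat^{k'}) = F(\projMat^{k''},\subGMat^{k''})$.

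Next I would invoke the two-block monotonicity of the alternating maximization \eqref{alg:am_prototype}: for every $k$,
\[
F(\projMat^k,\subGMat^k) \;\leq\; F(\projMat^{k+1},\subGMat^k) \;\leq\; F(\projMat^{k+1},\subGMat^{k+1}),
\]
where the first inequality comes from $\projMat^{k+1}$ being a maximizer over $\st{d,K}$ of $\innerprod{\dataMat\subGMat^k}{\cdot}$ and the second from $\subGMat^{k+1}$ being a maximizer over the unit $L_\infty$ ball of $\innerprod{\dataMatTrans\projMat^{k+1}}{\cdot}$. Chaining these inequalities from $k=k'$ through $k=k''-1$ and using the equality $F(\projMat^{k'},\subGMat^{k'}) = F(\projMat^{k''},\subGMat^{k''})$ forces every inequality in the chain to collapse to an equality. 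In particular,
\[
F(\projMat^{k'+1},\subGMat^{k'+1}) - F(\projMat^{k'+1},\subGMat^{k'}) \;=\; 0.
\]

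The final step is then a direct appeal to Corollary \ref{col:f_donot_change_foc_point}, which asserts exactly that this equality guarantees $\projMat^{k'+1}$ is a FOC point of the form \eqref{eq:FOC_fpf}. There is no serious obstacle here: the entire argument is a bookkeeping exercise once one notices that the $\subGMat$-block is a deterministic function of the $\projMat$-block (so periodicity propagates across both blocks) and that monotonicity plus equal endpoints forces a constant sub-chain. Note this reasoning also quietly justifies Remark \ref{rmk:setting}: before the algorithm stops, no $\projMat^k$ can repeat, so the ambiguity in the $\projMat$-factor of PD never arises in practice.
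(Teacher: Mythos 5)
Your proposal is correct and follows essentially the same route as the paper's own proof: observe that $\subGMat^k=\operatorname{sgn}(\dataMatTrans\projMat^k)$ makes the objective value repeat, let the two-block monotonicity chain collapse to equalities, and conclude via the argument codified in Corollary \ref{col:f_donot_change_foc_point} (the paper phrases this last step as ``similar to the proof of Theorem \ref{th:finite_conv_original_alg_fixed_point_iteration}''). No gaps.
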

\begin{proof}
	Since $\subGMat^k = {\rm sgn}(\dataMatTrans\projMat^k)$, this together with $\projMat^{k^\prime} = \projMat^{k^{\prime\prime}}$ gives that $F(\projMat^{k^\prime},\subGMat^{k^\prime})=F(\projMat^{k^{\prime\prime}},\subGMat^{k^{\prime\prime}})$, which further implies that $F(\projMat^{k^\prime+1},\subGMat^{k^\prime+1}) = F(\projMat^{k^\prime+1},\subGMat^{k^\prime}) = F(\projMat^{k^\prime},\subGMat^{k^\prime})$. Similar to the proof of Theorem \ref{th:finite_conv_original_alg_fixed_point_iteration}, we can conclude that $\projMat^{k^\prime+1}$ is a FOC point of the form \eqref{eq:FOC_fpf}. 
\end{proof}
As a result of Proposition \ref{prop:no_repeat_U}, before we encounter a FOC point, there does not exist repeated $\projMat^k$ generated by the algorithm.  

Finally, we   remark that finite-step convergence     implies linear convergence to a certain extent. Assume that $k^\prime$ is the   index such that $F(\projMat^{k^\prime+1},\subGMat^{k^\prime+1}) - F(\projMat^{k^\prime+1},\subGMat^{k^\prime})=0$, i.e., $\projMat^{k^\prime+1}$ is a FOC point.   Denote 
\begin{align*}
\rho := \max_{0\leq k\leq k^\prime-1}  \frac{ F(\projMat^{k^\prime+1},\subGMat^{k^\prime+1}) - F(\projMat^{k+1},\subGMat^{k+1})  }{F(\projMat^{k^\prime+1},\subGMat^{k^\prime+1}) - F(\projMat^{k},\subGMat^{k})   }.
\end{align*}
Thus $0<\rho<1$ because  the objective value increases strictly before it stops by Proposition \ref{prop:subsequential_conv}. Therefore,
\begin{align*}
&	F(\projMat^{k^\prime+1},\subGMat^{k^\prime+1}) - F(\projMat^{k+1},\subGMat^{k+1})\\  
\leq& \rho \bigxiaokuohao{ F(\projMat^{k^\prime+1},\subGMat^{k^\prime+1}) - F(\projMat^{k},\subGMat^{k})   } \\
\leq& \cdots \leq \rho^{k+1} \bigxiaokuohao{ F(\projMat^{k^\prime+1},\subGMat^{k^\prime+1}) - F(\projMat^{0},\subGMat^{0})   }. 
\end{align*}
\begin{corollary}
	With respect to the objective value,  NGA \eqref{alg:fixed_point_iteration_l1_pca} (interpreted as \eqref{alg:am_prototype}) converges linearly before it stops. 
\end{corollary}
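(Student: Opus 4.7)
The plan is to deduce the linear-rate bound directly from the finite-step convergence established in Theorem \ref{th:finite_conv_original_alg_fixed_point_iteration}, following the chain of inequalities already sketched in the paragraph preceding the corollary. The key observation is that once finite-step termination is known, the ratio $\rho$ is taken as a maximum over only finitely many indices $k = 0, 1, \ldots, k^{\prime}-1$, so it suffices to verify that every single ratio appearing in the defining max is strictly less than $1$; then $\rho < 1$ automatically.

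First I would fix $k^\prime$ as the first index for which $F(\projMat^{k^\prime+1},\subGMat^{k^\prime+1}) - F(\projMat^{k^\prime+1},\subGMat^{k^\prime}) = 0$; Theorem \ref{th:finite_conv_original_alg_fixed_point_iteration} guarantees its existence and gives $k^\prime \leq \lceil F^{\max}/\tau_0\rceil$. Before this index the objective strictly increases, and in fact by item 2 of that theorem each increase is at least $\tau_0 > 0$. This is exactly what is needed to ensure that for every $0 \leq k \leq k^\prime - 1$ the denominator $F(\projMat^{k^\prime+1},\subGMat^{k^\prime+1}) - F(\projMat^k,\subGMat^k)$ is strictly positive while the numerator is strictly smaller than the denominator, so each ratio in the definition of $\rho$ lies in $(0,1)$; taking the maximum over a finite set preserves this bound and yields $\rho \in (0,1)$.

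Next I would iterate the one-step contraction
\[
F(\projMat^{k^\prime+1},\subGMat^{k^\prime+1}) - F(\projMat^{k+1},\subGMat^{k+1}) \leq \rho \bigl(F(\projMat^{k^\prime+1},\subGMat^{k^\prime+1}) - F(\projMat^{k},\subGMat^{k})\bigr),
\]
which is immediate from the definition of $\rho$, to arrive at
\[
F(\projMat^{k^\prime+1},\subGMat^{k^\prime+1}) - F(\projMat^{k+1},\subGMat^{k+1}) \leq \rho^{k+1}\bigl(F(\projMat^{k^\prime+1},\subGMat^{k^\prime+1}) - F(\projMat^0,\subGMat^0)\bigr),
\]
valid for all $k \leq k^\prime - 1$. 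After $k^\prime$ the left-hand side is identically zero, so the bound holds trivially for all $k$. This is the desired linear (Q-linear in objective gap) rate.

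The only conceptual subtlety, rather than a real obstacle, is justifying that $\rho < 1$ rather than $\rho \leq 1$; this is where Proposition \ref{prop:subsequential_conv} and item 2 of Theorem \ref{th:finite_conv_original_alg_fixed_point_iteration} (strict increase by at least $\tau_0$ before termination) are essential, since without a uniform positive gap one could only get $\rho \leq 1$. Because the number of ratios involved is at most $\lceil F^{\max}/\tau_0\rceil$, taking the max does not degrade the strict inequality, and the proof is complete in a few lines.
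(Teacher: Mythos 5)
Your proposal is correct and follows essentially the same route as the paper: the paper likewise defines $\rho$ as the maximum of the finitely many objective-gap ratios over $0\leq k\leq k^\prime-1$, argues $0<\rho<1$ from the strict increase of the objective before termination, and iterates the one-step contraction to obtain the geometric bound. Your additional remark that the uniform gap $\tau_0$ from item 2 of Theorem \ref{th:finite_conv_original_alg_fixed_point_iteration} is what secures $\rho<1$ (rather than merely $\rho\leq 1$) is a sound clarification of the same argument.
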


Comparing the main convergence results obtained in this section, i.e., Theorems \ref{th:global_conv} and \ref{th:finite_conv_original_alg_fixed_point_iteration}, we obtain from Theorem \ref{th:global_conv} that   when interpreted as a CondGradU,  the iterative points are proved to stop in finitely many steps, if a full rank assumption is satisfied; when viewed as an alternating maximization,  Theorem \ref{th:finite_conv_original_alg_fixed_point_iteration} ensures that the objective value will not change in finitely many steps without assumptions, but the iterative points might not, as discussed below Corollary  \ref{col:f_donot_change_foc_point}.  To achieve  finite-step convergence on the iterative points while avoiding     assumptions,  we consider a   slightly modification for NGA \eqref{alg:fixed_point_iteration_l1_pca} in the next subsection.  

\subsection{Finite-step convergence of  $\subGMat$-PNGA}
Given the interpretation of NGA \eqref{alg:fixed_point_iteration_l1_pca} as an alternating maximization \eqref{alg:am_prototype}, we   consider a simple modification:   substracting a proximal term from the $\subGMat$-subproblem.  To emphasize the importance of $\subGMat$ in the coming analysis, we switch the order of the two subproblems; we also slightly modify the constraint of the $\subGMat$-subproblem, resulting in:
\begin{align}
\label{alg:am_prox_prototype}
\begin{split}
& {\rm 1. ~Compute}~\projMat^{k+1} \in \arg\max_{\projMat\in\st{d,K}}\innerprod{\dataMat\subGMat^{k}}{\projMat}  \\
&  {\rm 2.~Compute}~ \subGMat^{k+1} \in \arg\max_{ \{ \subGMat\mid \subGMat_{ij}\in \{0,\pm 1 \} \} }\innerprod{\dataMatTrans\projMat^{k+1}}{\subGMat} \\
&~~~~~~~~~~~~~~~~~~~~~~~~~~~~~- \frac{\tau}{2}\bigfnormsquare{\subGMat - \subGMat^{k}},
\end{split}
\end{align} 
where $\tau>0$ is a given parameter.  After   calculation, the $\subGMat$-subproblem can be equivalently rewritten as $\min_{ \{ \subGMat\mid \subGMat_{ij}\in \{0,\pm 1 \} \}}\bigfnorm{ \subGMat - \bigxiaokuohao{ \tau\subGMat^k + \dataMatTrans\projMat^{k+1}   }   }$, and clearly, $\subGMat^{k+1 } = \operatorname{sgn}\bigxiaokuohao{ \tau\subGMat^{k} + \dataMatTrans\projMat^{k+1}  }$  solves this subproblem. In this regard, the   algorithm is given as follows:

\begin{center}
	\begin{boxedminipage}{8.75cm}
		\begin{align}
		\label{alg:am_prox_l1_pca}
		\begin{split}
		{\rm (}\subGMat{\rm -PNGA)}~~	&\projMat^{k+1}=\operatorname{PD}\bigxiaokuohao{\dataMat\subGMat^k},\\
		&\subGMat^{k+1} = \operatorname{sgn}\bigxiaokuohao{\tau\subGMat^k+ \dataMatTrans\projMat^{k+1}   }.
		\end{split}
		\end{align}
	\end{boxedminipage}
\end{center}

When $\tau=0$, the above scheme reduces to NGA \eqref{alg:fixed_point_iteration_l1_pca}. The scheme \eqref{alg:am_prox_l1_pca} will be termed as $\subGMat$-PNGA in the following, where the prefix ``$\subGMat$-P'' means that the proximal term is only imposed on the $\subGMat$-subproblem.

In general, imposing   proximal terms will make it easier to establish   subsequential and global convergence of   optimization algorithms. In the context of $L_1$-norm PCA, the idea of imposing   proximal terms is motivated by \cite{wang2021linear}; however, different from \cite{wang2021linear}, here   the proximal term is only imposed on the $\subGMat$-subproblem. The reason is driven by theory, which will be   seen in Theorem \ref{th:finite_step_conv} and Remark \ref{rmk:no_prox_u_subproblem}. 
We begin with     the following simple observation: 
\begin{proposition}
	\label{prop:sk_sk1_geq2}
	Let $\subGMat^\prime,\subGMat^{\prime\prime}$ be two matrices of the same size with $\subGMat^\prime_{ij}\in \{-1,0,1\}$ and $\subGMat^{\prime\prime}_{ij}\in \{-1,0,1\}$ for each $i,j$. Then either $\subGMat^\prime = \subGMat^{\prime\prime}$, or $\bigfnorm{\subGMat^\prime - \subGMat^{\prime\prime}}\geq 1$. 
\end{proposition}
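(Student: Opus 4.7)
The plan is to argue entrywise. If $\subGMat^\prime \neq \subGMat^{\prime\prime}$, then by definition there exists at least one index pair $(i_0,j_0)$ at which the two matrices differ, so $\subGMat^\prime_{i_0 j_0} - \subGMat^{\prime\prime}_{i_0 j_0} \neq 0$.

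Next I would enumerate the possible values of this nonzero difference. Since $\subGMat^\prime_{i_0 j_0}, \subGMat^{\prime\prime}_{i_0 j_0} \in \{-1,0,1\}$, the difference $\subGMat^\prime_{i_0 j_0} - \subGMat^{\prime\prime}_{i_0 j_0}$ lies in the set $\{-2,-1,0,1,2\}$; excluding $0$ (since the entries differ), we get $|\subGMat^\prime_{i_0 j_0} - \subGMat^{\prime\prime}_{i_0 j_0}| \geq 1$. Therefore
\[
\bigfnormsquare{ \subGMat^\prime - \subGMat^{\prime\prime} } = \sum_{i,j} \bigxiaokuohao{\subGMat^\prime_{ij} - \subGMat^{\prime\prime}_{ij}}^2 \geq \bigxiaokuohao{\subGMat^\prime_{i_0 j_0} - \subGMat^{\prime\prime}_{i_0 j_0}}^2 \geq 1,
\]
and taking the square root yields $\bigfnorm{\subGMat^\prime - \subGMat^{\prime\prime}} \geq 1$, giving the dichotomy.

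There is no genuine obstacle here; the statement is a finite case-analysis on a single entry combined with nonnegativity of the remaining squared differences. The only thing worth noting is that the bound $1$ is tight: it is attained when $\subGMat^\prime$ and $\subGMat^{\prime\prime}$ agree everywhere except one entry where one matrix has $0$ and the other has $\pm 1$.
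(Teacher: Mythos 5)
Your proof is correct and is exactly the intended argument: the paper states this proposition without proof as a ``simple observation,'' and the entrywise case analysis you give (a nonzero difference of two elements of $\{-1,0,1\}$ has absolute value at least $1$, hence the squared Frobenius norm is at least $1$) is the obvious justification. Your remark on tightness of the bound is a nice extra but not needed.
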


The above observation in fact implies the mechanism of the proximal term used in \eqref{alg:am_prox_prototype}: it will together with the increasing property of the objective value force the $\subGMat$ variable (and so $\projMat$) fixed after finitely many steps.   Detailed results of the finite-step convergence of $\subGMat$-PNGA \eqref{alg:am_prox_l1_pca} are given in the following theorem.
\begin{theorem}[Finite-step convergence of $\subGMat$-PNGA \eqref{alg:am_prox_l1_pca}]
	\label{th:finite_step_conv}
	Let $\bigdakuohao{\projMat^k,\subGMat^k}$ be generated by $\subGMat$-PNGA \eqref{alg:am_prox_l1_pca} with $\tau>0$, where the initializer $(\projMat^0,\subGMat^0)$ is given such that $F(\projMat^0,\subGMat^0)\geq 0$. Then 
	\begin{enumerate}
		\item There must exist a $k_0>0$, such that  $\subGMat^{k_0} = \subGMat^{k_0+1} = \subGMat^{k_0+2} = \cdots$;
		\item With Setting \ref{set:1},  after at most $\left\lceil \frac{2F^{\max}}{\tau}\right\rceil$ steps, there must hold $\projMat^k=\projMat^{k+1}=\projMat^{k+2}=\cdots$ and $\subGMat^k=\subGMat^{k+1}=\subGMat^{k+2}=\cdots$; before the algorithm stops, the objective value decreases at least the quantity $\frac{\tau}{2}$ in each iteration;
		\item With Setting \ref{set:1}, there   exists  a constant $\tau_1>0$, such that whenever $0<\tau<\tau_1$, after at most  $\left\lceil \frac{2 F^{\max}}{\tau}\right\rceil$ steps, the algorithm stops at a FOC point of the form \eqref{eq:FOC_fpf}. More precisely, let $k^\prime$ be the first step that $\projMat^{k^\prime} = \projMat^{k^\prime + 1}$. Then $\projMat^{k^\prime+1}$ is a FOC point of the form \eqref{eq:FOC_fpf}, and $(\projMat^{k^\prime + 1},\subGMat^{k^\prime+1})$ is 
		also a partial maximizer of \eqref{prob:l1_pca_bilinear}. 
	\end{enumerate}
\end{theorem}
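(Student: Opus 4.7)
The proof falls naturally into the three items, all driven by two simple observations: the proximal term in the $\subGMat$-subproblem of \eqref{alg:am_prox_prototype} makes the objective strictly increase whenever $\subGMat$ changes, and by Proposition \ref{prop:sk_sk1_geq2} the ``jump'' $\bigfnorm{\subGMat^{k+1}-\subGMat^k}$ is either $0$ or at least $1$.

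For item 1, I would first exploit optimality of $\subGMat^{k+1}$ for the proximal subproblem to get the sufficient-increase inequality
\[
F(\projMat^{k+1},\subGMat^{k+1}) - F(\projMat^{k+1},\subGMat^k) \;\geq\; \tfrac{\tau}{2}\bigfnorm{\subGMat^{k+1}-\subGMat^k}^2,
\]
and to combine it with $F(\projMat^{k+1},\subGMat^k)\geq F(\projMat^k,\subGMat^k)$ (from the $\projMat$-subproblem, Proposition \ref{prop:polar_max}) to obtain monotone increase of the objective. Since $F$ is bounded above by $F^{\max}$ and, by Proposition \ref{prop:sk_sk1_geq2}, each change in $\subGMat$ contributes at least $\tau/2$ to the gap, $\subGMat^k$ can change only finitely often, giving item 1.

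For item 2, once $\subGMat^{k_0}=\subGMat^{k_0+1}$, Setting \ref{set:1} forces $\projMat^{k_0+2}=\operatorname{PD}(\dataMat\subGMat^{k_0+1})=\operatorname{PD}(\dataMat\subGMat^{k_0})=\projMat^{k_0+1}$, and then $\subGMat^{k_0+2}=\operatorname{sgn}\bigxiaokuohao{\tau\subGMat^{k_0+1}+\dataMatTrans\projMat^{k_0+2}}=\subGMat^{k_0+1}$; an induction propagates this to all later indices. Telescoping the inequality in the previous paragraph and using $F(\projMat^0,\subGMat^0)\geq 0$ yields that the number of nontrivial iterations cannot exceed $F^{\max}/(\tau/2)=2F^{\max}/\tau$, i.e., the algorithm freezes within $\left\lceil 2F^{\max}/\tau\right\rceil$ steps. (I note in passing that the statement's ``decreases'' should read ``increases''.)

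For item 3, I would mimic the construction of $\tau_0$ in \eqref{eq:tau0}: let $\boldsymbol{U}$ be the set of projection matrices generated by the algorithm. Under Setting \ref{set:1}, since the finitely many possible $\subGMat\in\{-1,0,1\}^{n\times K}$ each deterministically determine $\projMat=\operatorname{PD}(\dataMat\subGMat)$, the set $\boldsymbol{U}$ is finite, so
\[
\tau_1 := \min\bigdakuohao{\,\bigjueduizhi{\mathbf x_i^\top\mathbf u_j}\,\mid\,\mathbf x_i^\top\mathbf u_j\neq 0,\ \projMat\in\boldsymbol{U}\,}
\]
is well-defined and strictly positive. Assume $0<\tau<\tau_1$. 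At the stopping iteration $k^\prime$, item 1 gives $\subGMat^{k^\prime}=\subGMat^{k^\prime+1}$. To prove $\subGMat^{k^\prime}\in\partial\bigonenorm{V}\big|_{V=\dataMatTrans\projMat^{k^\prime+1}}$ entrywise, I split cases on $\mathbf x_i^\top\mathbf u_j^{k^\prime+1}$: when it is positive, $\tau<\tau_1$ forces the sign of $\tau\subGMat^{k^\prime}_{ij}+\mathbf x_i^\top\mathbf u_j^{k^\prime+1}$ to be $+1$ regardless of the value of $\subGMat^{k^\prime}_{ij}\in\{-1,0,1\}$, so $\subGMat^{k^\prime+1}_{ij}=1\in\partial|V_{ij}|$; the negative case is symmetric; and when $\mathbf x_i^\top\mathbf u_j^{k^\prime+1}=0$ every entry in $\{-1,0,1\}$ lies in $[-1,1]=\partial|0|$ automatically. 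Combined with $\projMat^{k^\prime+1}=\operatorname{PD}(\dataMat\subGMat^{k^\prime})$, this is precisely \eqref{eq:FOC_fpf}; the partial-maximizer claim for \eqref{prob:l1_pca_bilinear} follows from the same case split.

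The main obstacle is the third item, specifically ensuring the minimum defining $\tau_1$ is taken over a finite set of strictly positive scalars. This hinges on Setting \ref{set:1} and the finiteness of admissible $\subGMat$'s, which is exactly what makes the proximal perturbation ``invisible'' in the subgradient inclusion once $\tau$ is below the smallest nonzero scale of the inner products $\mathbf x_i^\top\mathbf u_j$; handling the zero-inner-product case is easy because the subdifferential $[-1,1]$ of $|\cdot|$ at zero is permissive.
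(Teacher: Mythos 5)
Your proposal is correct and follows essentially the same route as the paper's proof: the same sufficient-increase inequality $F(\projMat^{k+1},\subGMat^{k+1})-F(\projMat^{k+1},\subGMat^k)\geq\frac{\tau}{2}\bigfnormsquare{\subGMat^{k+1}-\subGMat^k}$ combined with Proposition \ref{prop:sk_sk1_geq2}, the same propagation argument under Setting \ref{set:1}, the same telescoping bound, and the same entrywise case analysis for the FOC conclusion. The only (harmless, arguably cleaner) deviation is that you define $\tau_1$ as a minimum over the full finite set of attainable $\projMat$-factors rather than, as the paper does, over the single stopping point $\projMat^{k^\prime+1}$; and your remark that ``decreases'' should read ``increases'' in item 2 is indeed consistent with what the proof actually establishes.
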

\begin{remark}

	In $\subGMat$-PNGA \eqref{alg:am_prox_l1_pca}, even if $\subGMat^k$ will not change when $k\geq k_0$, $\projMat^k$ might not stop.  For example, if letting $\subGMat^0 = \operatorname{sgn}(\dataMatTrans\projMat^0)$ and $\tau > \max_{i,j}|\mathbf x^{\top}_i\mathbf u_j |$, $\forall \projMat \in \st{d,K}$, then we always have $\subGMat^0=\subGMat^1=\cdots$, while if $\rank{\dataMat\subGMat^0}<K$, then any $\projMat$-factor of PD of $  \dataMat\subGMat^0 $ can be used as $\projMat^k$. Thus we work with Setting \ref{set:1}. 
	
	Next, even if both $\projMat^k$ and $\subGMat^k$ stop  in finitely many steps, the output may not be a FOC point. The reason is still due to a large $\tau$. Therefore, to make the algorithm finds   a FOC point in finitely many steps, $\tau$ cannot be too large. 
\end{remark}
\begin{proof}[Proof of Theore \ref{th:finite_step_conv}]
	Item 1: By the definition of $\subGMat^{k+1}$ in $\subGMat$-PNGA \eqref{alg:am_prox_l1_pca}, we have
	\begin{align*}
	&\innerprod{\dataMatTrans\projMat^{k+1}}{\subGMat^{k+1}} - \frac{\tau}{2}\bigfnormsquare{\subGMat^{k+1}-\subgradMat^k} \\ 
	\geq& \innerprod{\dataMatTrans\projMat^{k+1}}{\subGMat} - \frac{\tau}{2}\bigfnorm{\subGMat - \subGMat^k}^2,~\forall\subGMat\in \{\subGMat\mid \subGMat_{ij}\in \{0,\pm 1\}  \}. 
	\end{align*}
	In particular, 
	\[
	\innerprod{\dataMatTrans\projMat^{k+1}}{\subGMat^{k+1}} - \frac{\tau}{2}\bigfnormsquare{\subGMat^{k+1}-\subgradMat^k} \geq \innerprod{\dataMatTrans\projMat^{k+1}}{\subGMat^k},
	\]
	i.e.,
	\begin{align}\label{eq:proof_finite_conv_0}
	F(\projMat^{k+1},\subGMat^{k+1}) - F(\projMat^{k+1},\subGMat^k) \geq \frac{\tau}{2}\bigfnormsquare{\subGMat^{k+1}-\subGMat^k}.
	\end{align}
	Since $F(\projMat^{k+1},\subGMat^k)\geq F(\projMat^k,\subGMat^k)$, we can use $F(\projMat^k,\subGMat^k)$ to replace $F(\projMat^{k+1},\subGMat^k)$ above.  
	Summing it from $k=0$ to $\infty$ gives
	\begin{align*}
	\frac{\tau}{2}\sum^{\infty}_{k=0}\bigfnormsquare{\subGMat^{k+1}-\subGMat^k} &\leq \lim_{k\rightarrow\infty}F(\projMat^{k},\subGMat^{k}) - F(\projMat^0,\subGMat^0)\\
	&\leq F^{\max} <+\infty.
	\end{align*}
	This together with Proposition \ref{prop:sk_sk1_geq2} shows that   there exists a $k_0$, such that     $\subGMat^{k_0} = \subGMat^{k_0+1}=\cdots$.
	
	Item 2: Let $k^\prime$ be the first step that $\subGMat^{k^\prime} = \subGMat^{k^\prime+1}$. Since with Setting \ref{set:1}, 
	$\projMat^{k+1}$ is uniquely determined by $\dataMat\subGMat^k$,   we get
	\begin{align}\label{eq:proof_finite_conv_1}
	\projMat^{k^\prime + 1} = \operatorname{PD}\bigxiaokuohao{\dataMat\subGMat^{k^\prime}} = \operatorname{PD}\bigxiaokuohao{\dataMat\subGMat^{k^\prime+1}} = \projMat^{k^\prime+2},
	\end{align}
	and consequently,
	\begin{align*}
	\subGMat^{k^\prime+2} &= \operatorname{sgn}\bigxiaokuohao{\tau\subGMat^{k^\prime+1} + \dataMatTrans\projMat^{k^\prime+2}} \\
	&= \operatorname{sgn}\bigxiaokuohao{\tau\subGMat^{k^\prime} + \dataMatTrans\projMat^{k^\prime+1}} = \subGMat^{k^\prime+1}.
	\end{align*}
	Continuing the procedure we always have $\subGMat^{k^\prime}=\subGMat^{k^\prime+1}=\subGMat^{k^\prime+2}=\cdots$ and $\projMat^{k^\prime}=\projMat^{k^\prime+1}=\projMat^{k^\prime+2}=\cdots$. Without loss of generality we may assume that $k^\prime$ is the first $k$ such that $\subGMat^k = \subGMat^{k+1}$.   On the other hand, the definition of $k^\prime$ in connection with Proposition \ref{prop:sk_sk1_geq2} shows that   
	\begin{align}\label{eq:proof_finite_conv_2}
	\begin{split}
	F(\projMat^{k+1},\subGMat^{k+1}) - F(\projMat^k,\subGMat^k) &\geq \frac{\tau}{2}\bigfnormsquare{\subGMat^{k+1}-\subGMat^k} \\
	&\geq \frac{\tau}{2},~k=0,1,\ldots,k^\prime - 1,
	\end{split}
	\end{align}
	and 
	\[
	F(\projMat^{k+1},\subGMat^{k+1})=F(\projMat^k,\subGMat^k),~k=k^\prime,k^\prime+1,\ldots,
	\]
	i.e., the objective value strictly increases with the quantity at least $\frac{\tau}{2}$ in each step in the first $k^\prime-1$ iterations, and is fixed after $k=k^\prime$. As a result, summing \eqref{eq:proof_finite_conv_2} from $k=0$ to $k^\prime-1$ yields
	\begin{align*}
	k^\prime\cdot \frac{\tau}{2}&\leq \lim_{k\rightarrow\infty}F(\projMat^{k},\subGMat^{k})-F(\projMat^0,\subGMat^0) \leq F^{\max}\\
	& ~\Rightarrow~ k^\prime \leq \left\lceil \frac{2F^{\max}}{\tau}\right\rceil,
	\end{align*}
	namely, the algorithm stops at most $\left\lceil \frac{2F^{\max}}{\tau}\right\rceil$ steps. 
	
	Item 3:  
	Denote 
	\[\tau_1:= \min \bigdakuohao{  |\mathbf x^{\top}_i\mathbf u_j^{k^\prime+1}  | ~\mid~ \mathbf x^\top_i\mathbf u_j^{k^\prime+1}\neq 0 }>0.\]  Now,
	if the parameter $\tau$ is chosen such that $0<\tau<\tau_1$, then whenever $\mathbf x^{\top}_i\mathbf u^{k^\prime+1}_j\neq 0$, $\tau< \tau_1\leq |\mathbf x^{\top}_i\mathbf u^{k^\prime+1}_j|$, and so
	\begin{align*}
	\subGMat^{k^\prime+1}_{ij} &=\operatorname{sgn}\bigxiaokuohao{\tau\subGMat^{k^\prime}_{ij} + \mathbf x^{\top}_i\mathbf u_j^{k^\prime+1}}\\
	& \left\{\begin{array}{ll} =\operatorname{sgn}\bigxiaokuohao{\mathbf x^{\top}_i\mathbf u^{k^\prime+1}_j},&{\rm if}~\mathbf x^{\top}_i\mathbf u^{k^\prime+1}_j\neq 0,\\
	\in [-1,1],&{\rm if}~ \mathbf x^{\top}_i\mathbf u^{k^\prime+1}_j = 0,
	\end{array}\right.
	\end{align*}
	namely, 
	\[
	\subgradMat^{k^\prime+1} \in \partial \bigonenorm{ V}\Big |_{V = \dataMatTrans\projMat^{k^\prime+1} },
	\]
	which together with $\projMat^{k^\prime+1} = \operatorname{PD}\bigxiaokuohao{\dataMat\subGMat^{k^\prime+1}}$ in \eqref{eq:proof_finite_conv_1} shows that $\projMat^{k^\prime+1}$ is a FOC point \eqref{eq:FOC_fpf}. It is also clear that such a point also satisfies $\subGMat^{k^\prime+1} \in \arg\max_{ \{ \subGMat\mid \subGMat_{ij}\in \{0,\pm 1 \} \}}\innerprod{\dataMatTrans\projMat^{k^\prime+1}}{\subGMat}$ and $\projMat^{k^\prime+1} \in \arg\max_{\projMat\in\st{d,K}}\innerprod{\dataMat\subGMat^{k^\prime+1}}{\projMat},$
	which is a partial maximizer of \eqref{prob:l1_pca_bilinear}. 
\end{proof}

\begin{remark}
	\label{rmk:no_prox_u_subproblem}
	It is also possible to impose a proximal term to the $\projMat$-subproblem, resulting into the computation $\projMat^{k+1} = \operatorname{PD}\bigxiaokuohao{\tau \projMat^k + \dataMat\subGMat^k }$. The question is now that $\projMat^{k+1}$ is not only determined by $\dataMat\subGMat^{k+1}$ but also   by $\projMat^k$. Since $\projMat^k\in \st{d,K}$ which is not discrete,  it is unclear whether the number of possible $\projMat^k$ is finite in this situation, and the finite-step convergence analysis may not go through. 
\end{remark}

\section{PAM${\rm e}$ and Finite-Step Convergence} \label{sec:am_convergence}
In \cite{wang2021linear}, the authors proposed a proximal alternating minimization method with an additional extrapolated step (PAMe) for $L_1$-norm PCA. By showing that the Kurdyka-\L{}ojasiewicz exponent of the problem is $1/2$, the authors were able to prove that PAMe converges globally and linearly, and the output is   a critical point if a certain parameter condition is met. Moreover, it was observed that PAMe is more efficient than NGA. Therefore, it would be also interesting to investigate the finite-step property of PAMe. First we recall PAMe of \cite{wang2021linear}   with our notations: 
\begin{align}
\label{alg:pame_org}
\begin{split}
&  {\rm 1.~Compute}~ \subGMat^{k+1} \in \arg\max_{ \{ \subGMat\mid \subGMat_{ij}\in \{0,\pm 1 \} \} }\innerprod{\dataMatTrans E^{k }}{\subGMat} \\
&~~~~~~~~~~~~~~~~~~~~~~~~~~~~~- \frac{\tau}{2}\bigfnormsquare{\subGMat - \subGMat^{k}},\\
& {\rm 2. ~Compute}~\projMat^{k+1} \in \arg\max_{\projMat\in\st{d,K}}\innerprod{\dataMat\subGMat^{k+1}}{\projMat}  \\
&~~~~~~~~~~~~~~~~~~~~~~~~~~~~~- \frac{\beta}{2}\bigfnormsquare{\projMat - \projMat^k},\\
& {\rm 3. ~Update}~E^{k+1} =  \projMat^{k+1} -  \gamma \bigxiaokuohao{\projMat^k-\projMat^{k+1}  },
\end{split}
\end{align} 
where $\tau>0,\beta>0$, and $\gamma\in [0,1)$. When $\tau=\beta=\gamma=0$, it reduces to NGA. Note that  the parameters in \cite{wang2021linear} can vary every iterations, while to keep things simple, we fix them. The constraint of $\subGMat$ is $\subGMat_{ij}\in \{\pm 1\}$ in \cite{wang2021linear}, while to be consistent, we still let $\subGMat_{ij}\in \{ 0,\pm 1 \}$, and this does not affect the results. The iterative scheme of PAMe is then written as follows:
\begin{center}
	\begin{boxedminipage}{8.75cm}
		\begin{align}
		\label{alg:pame}
		\begin{split}
		&\subGMat^{k+1} = \operatorname{sgn}\bigxiaokuohao{\tau \subGMat^k + \dataMatTrans E^k}, \\
		{\rm (} {\rm PAMe)}~~&\projMat^{k+1} = \operatorname{PD}\bigxiaokuohao{\beta \projMat^k+\dataMat\subGMat^{k+1}}, \\
		&E^{k+1} =  \projMat^{k+1} -  \gamma \bigxiaokuohao{\projMat^k-\projMat^{k+1}  }.
		\end{split}
		\end{align}
	\end{boxedminipage}
\end{center}

\subsection{Finite-step convergence of PAMe}
At a first glance, since the $\subGMat$-step takes the form $\subGMat^{k+1} = \operatorname{sgn}\bigxiaokuohao{\tau \subGMat^k + \dataMatTrans E^k}$, it is expected that as that of $\subGMat$-PNGA, sufficient increasing inequality \eqref{eq:proof_finite_conv_0} can be established and so $\subGMat^k$ will be fixed after finitely many steps. Indeed, this is true if the parameters satisfy certain assumptions.
\begin{proposition}
	\label{prop:pame_s_stops}
	Let $\{\projMat^k,\subGMat^k  \}$ be generated by PAMe \eqref{alg:pame}. Then there exists a small enough   $\gamma$, such that after finitely many steps, $\subGMat^k=\subGMat^{k+1}=\subGMat^{k+2}=\cdots$. 
\end{proposition}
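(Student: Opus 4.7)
The key structural facts I would use are: (i) $\subGMat^k$ takes values in the discrete set $\{0,\pm 1\}^{n\times K}$, so by Proposition \ref{prop:sk_sk1_geq2} consecutive iterates are either equal or differ by at least $1$ in Frobenius norm; (ii) $F(\projMat,\subGMat) \le F^{\max}$ is uniformly bounded above. Thus it suffices to prove that $\sum_k \bigfnormsquare{\subGMat^{k+1}-\subGMat^k} < \infty$, for then only finitely many indices $k$ can have $\bigfnorm{\subGMat^{k+1}-\subGMat^k}\ge 1$, and from some $k_0$ onward $\subGMat^k$ is constant.

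\textbf{Two sufficient-ascent inequalities.}  The first would come from plugging $\subGMat=\subGMat^k$ into the maximization defining $\subGMat^{k+1}$ in \eqref{alg:pame_org}, yielding
\[
\innerprod{\dataMatTrans E^k}{\subGMat^{k+1}-\subGMat^k} \;\ge\; \tfrac{\tau}{2}\bigfnormsquare{\subGMat^{k+1}-\subGMat^k}.
\]
The second comes from plugging $\projMat=\projMat^k$ into the $\projMat$-subproblem, giving
\[
\innerprod{\dataMat\subGMat^{k+1}}{\projMat^{k+1}-\projMat^k} \;\ge\; \tfrac{\beta}{2}\bigfnormsquare{\projMat^{k+1}-\projMat^k}.
\]
Using $E^k=(1+\gamma)\projMat^k-\gamma\projMat^{k-1}$, I would split the first inequality as
\[
\innerprod{\dataMatTrans\projMat^k}{\subGMat^{k+1}-\subGMat^k} \;\ge\; \tfrac{\tau}{2}\bigfnormsquare{\subGMat^{k+1}-\subGMat^k} - \gamma\innerprod{\dataMatTrans(\projMat^k-\projMat^{k-1})}{\subGMat^{k+1}-\subGMat^k},
\]
and control the cross term by Cauchy--Schwarz and Young's inequality: for any $\eps>0$,
\[
\gamma\bignorm{\dataMat}_2\bigfnorm{\projMat^k-\projMat^{k-1}}\bigfnorm{\subGMat^{k+1}-\subGMat^k} \le \tfrac{\gamma\bignorm{\dataMat}_2^2}{2\eps}\bigfnormsquare{\projMat^k-\projMat^{k-1}} + \tfrac{\gamma\eps}{2}\bigfnormsquare{\subGMat^{k+1}-\subGMat^k}.
\]

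\textbf{Construction of a Lyapunov quantity.}  Adding the two ascent inequalities and using the decomposition
\[
F(\projMat^{k+1},\subGMat^{k+1})-F(\projMat^k,\subGMat^k) = \innerprod{\dataMat\subGMat^{k+1}}{\projMat^{k+1}-\projMat^k} + \innerprod{\dataMatTrans\projMat^k}{\subGMat^{k+1}-\subGMat^k},
\]
I would obtain
\[
F(\projMat^{k+1},\subGMat^{k+1})-F(\projMat^k,\subGMat^k) \ge \tfrac{\beta}{2}\bigfnormsquare{\projMat^{k+1}-\projMat^k} + \tfrac{\tau-\gamma\eps}{2}\bigfnormsquare{\subGMat^{k+1}-\subGMat^k} - \tfrac{\gamma\bignorm{\dataMat}_2^2}{2\eps}\bigfnormsquare{\projMat^k-\projMat^{k-1}}.
\]
Define the Lyapunov function $\hat F_k := F(\projMat^k,\subGMat^k) - c\bigfnormsquare{\projMat^k-\projMat^{k-1}}$. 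Choosing $c$ and $\eps$ so that $\tfrac{\gamma\bignorm{\dataMat}_2^2}{2\eps}<c<\tfrac{\beta}{2}$ and $\gamma\eps<\tau$ — feasible exactly when $\gamma<\sqrt{\tau\beta}/\bignorm{\dataMat}_2$ after optimizing in $\eps$ — I get
\[
\hat F_{k+1}-\hat F_k \;\ge\; c_1\bigfnormsquare{\projMat^{k+1}-\projMat^k} + c_2\bigfnormsquare{\subGMat^{k+1}-\subGMat^k} + c_3\bigfnormsquare{\projMat^k-\projMat^{k-1}}
\]
with $c_1,c_2,c_3>0$.

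\textbf{Conclusion.}  Since $\hat F_k \le F(\projMat^k,\subGMat^k)\le F^{\max}$ and $\{\hat F_k\}$ is nondecreasing, it converges, and telescoping the inequality above gives $\sum_k \bigfnormsquare{\subGMat^{k+1}-\subGMat^k}<\infty$.  Combined with Proposition \ref{prop:sk_sk1_geq2}, this forces $\subGMat^{k+1}=\subGMat^k$ for all $k$ beyond some $k_0$, which is the claim.

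The main technical obstacle is the extrapolation term: $E^k$ depends on $\projMat^{k-1}$, so the bare $F$ is not monotone. The whole point of introducing the extra quadratic penalty $c\bigfnormsquare{\projMat^k-\projMat^{k-1}}$ in $\hat F_k$ is to absorb the unwanted $\|\projMat^k-\projMat^{k-1}\|_F^2$ term arising from the cross-product, and this is exactly what dictates the smallness condition $\gamma<\sqrt{\tau\beta}/\bignorm{\dataMat}_2$. Everything else is routine summability plus the discrete pigeonhole in Proposition \ref{prop:sk_sk1_geq2}.
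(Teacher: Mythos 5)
Your proof is correct, and the core strategy is the same as the paper's: the two sufficient-ascent inequalities from the $\subGMat$- and $\projMat$-subproblems, Young's inequality to absorb the extrapolation cross-term $\gamma\innerprod{\dataMatTrans(\projMat^k-\projMat^{k-1})}{\subGMat^{k+1}-\subGMat^k}$, summability of $\bigfnormsquare{\subGMat^{k+1}-\subGMat^k}$, and the discrete pigeonhole of Proposition \ref{prop:sk_sk1_geq2}. The only real difference is the bookkeeping of the Lyapunov quantity. The paper telescopes the cross-paired sum $\innerprod{\dataMatTrans\projMat^k}{\subGMat^{k+1}}$, which automatically aligns the $\bigfnormsquare{\projMat^k-\projMat^{k-1}}$ terms across consecutive iterations without any explicit correction, and it fixes the Young parameter at $\eps=\tau$, arriving at the condition $\gamma<\min\{1,\beta\tau/\bignorm{\dataMat}_2^2\}$. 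You instead telescope $F(\projMat^k,\subGMat^k)-c\bigfnormsquare{\projMat^k-\projMat^{k-1}}$ and keep $\eps$ free, which after optimization yields the marginally more generous threshold $\gamma<\sqrt{\tau\beta}/\bignorm{\dataMat}_2$ (larger than the paper's whenever $\tau\beta\leq\bignorm{\dataMat}_2^2$, the relevant regime). Both routes are valid; the paper's is slightly leaner because no auxiliary constant $c$ is needed, while yours makes the role of the extrapolation term as a perturbation of a monotone potential more explicit and quantifies the admissible $\gamma$ a bit more sharply.
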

\begin{proof}
	The proof is similar to those of \cite[Theorem 2]{wang2021linear} and Theorem \ref{th:finite_step_conv}.  First  by the definition of $\subGMat^{k+1}$, we have
	\begin{align}
	\label{eq:proof_finite_conv_pame_1}
	\innerprod{\dataMatTrans E^k}{\subGMat^{k+1}} - \innerprod{\dataMatTrans E^k}{\subGMat^k} \geq \frac{\tau}{2}\bigfnormsquare{\subGMat^{k+1}-\subGMat^k}.
	\end{align}
	\eqref{eq:proof_finite_conv_pame_1} together with the definition of $E^k$  means that
	\begin{align}
	\label{eq:proof_finite_conv_pame_2}
	\begin{split}
	&	\innerprod{\dataMatTrans\projMat^{k}}{\subGMat^{k+1}} - \innerprod{\dataMatTrans\projMat^k}{\subGMat^k} \\
	\geq &  \frac{\tau}{2}\bigfnormsquare{\subGMat^{k+1}-\subGMat^k} + \gamma\innerprod{\dataMatTrans\bigxiaokuohao{\projMat^{k}-\projMat^{k-1}}}{\subGMat^{k+1}-\subGMat^k} \\
	\geq &\frac{\tau}{2}\bigfnormsquare{\subGMat^{k+1}-\subGMat^k} - \frac{\gamma\tau}{2}\bigfnormsquare{\subGMat^{k+1}-\subGMat^k} \\
	&~~-  \frac{\gamma\|\dataMat\|_2^2}{2\tau}\bigfnormsquare{\projMat^k - \projMat^{k-1}}.
	\end{split}
	\end{align}
	The definition of $\projMat^{k}$ shows that
	\begin{align}
	\label{eq:proof_finite_conv_pame_3}
	\innerprod{\projMat^{k}}{\dataMat\subGMat^{k}} - \innerprod{\projMat^{k-1}}{\dataMat\subGMat^{k}} \geq \frac{\beta}{2}\bigfnormsquare{\projMat^{k}-\projMat^{k-1}}.
	\end{align}
	Denote $\eta := \frac{\gamma\|\dataMat\|_2^2}{\tau}$. Combining \eqref{eq:proof_finite_conv_pame_2} and \eqref{eq:proof_finite_conv_pame_3} together, we get
	\begin{align*}
	&		\innerprod{\dataMatTrans\projMat^{k}}{\subGMat^{k+1}} -  \innerprod{\dataMatTrans\projMat^{k-1}}{ \subGMat^{k}} \\
	\geq&  \frac{\tau(1-\gamma)}{2}\bigfnormsquare{\subGMat^{k+1}-\subGMat^k} + \frac{\beta-\eta}{2}\bigfnormsquare{\projMat^{k} - \projMat^{k-1}}. 
	\end{align*}
	Summing the above inequality from $k=1$ to infinity, we have
	\begin{align*}
	&\sum^\infty_{k=1}\bigxiaokuohao{\frac{\tau(1-\gamma)}{2}\bigfnormsquare{\subGMat^{k+1}-\subGMat^k} + \frac{\beta-\eta}{2}\bigfnormsquare{\projMat^{k} - \projMat^{k-1}}} \\
	\leq & \lim_{k\rightarrow\infty} \innerprod{\dataMatTrans\projMat^{k}}{\subGMat^{k+1}} -  \innerprod{\dataMatTrans\projMat^{0}}{ \subGMat^{1}}  \\ 
	\leq & \max_{\projMat \in\st{d,K}, { \{ \subGMat\mid \subGMat_{ij}\in \{0,\pm 1 \} \} }  } \innerprod{\dataMatTrans\projMat}{\subGMat}       =  F^{\max}.
	\end{align*}
	Now, if $\gamma$ is chosen sufficiently small such that $\beta \geq \eta$, i.e., $\gamma \leq \frac{ \beta \tau }{ \|\dataMat\|_2^2 }$, the above gives
	\begin{align*}
	\sum^\infty_{k=0}\frac{\tau(1-\gamma)}{2}\bigfnormsquare{\subGMat^{k+1}-\subGMat^k}\leq  F^{\max}.
	\end{align*}
	Further by setting $\gamma < \min \{1,\frac{ \beta \tau }{ \|\dataMat\|_2^2 } \}$, the above relation 
	together with Proposition \ref{prop:sk_sk1_geq2} shows that  after finitely many steps, $\subGMat^k=\subGMat^{k+1}=\subGMat^{k+2}=\cdots$. 
\end{proof}
\begin{remark}
	Unlike Theorem \ref{th:finite_step_conv}, it may not be possible to derive an upper bound on the number of steps. This is due to the update of $\projMat^{k+1}$ does not ensure that if $\subGMat^k=\subGMat^{k+1}$ for some $k$, then there must hold that $\subGMat^k=\subGMat^{k+1}=\subGMat^{k+2}=\cdots$; c.f. the proof of item 2 of Theorem \ref{th:finite_step_conv}. In other words, the objective value may not decrease at least the quantity of $\tau(1-\gamma)/2$ in each iteration. 
\end{remark}

Denote $\subGMat^*$ as the stopping point of $\{ \subGMat^k \}$, i.e., 
$ \subGMat^{k^\prime}=\subGMat^{k^\prime+1}=\cdots = \subGMat^*$,
where $\subGMat^{k^\prime}$ is the first step that $\subGMat^k$ will not change any more. Thus the first line of PAMe \eqref{alg:pame} reduces to:
\begin{align}
\label{eq:s_pame_fixed}
\subGMat^* = \operatorname{sgn}\bigxiaokuohao{ \tau\subGMat^* + \dataMatTrans E^{k} },~k=k^\prime,k^\prime+1,\ldots.
\end{align}
Therefore,   there is no need to excute the first and the third lines of PAMe anymore when $k\geq k^\prime$, and the algorithm reduces to the following single-line scheme:
\begin{align}\label{eq:sec:pame:1}
\projMat^{k+1} = \operatorname{PD}\bigxiaokuohao{\beta \projMat^k+\dataMat\subGMat^{*}},~k =k^\prime ,k^\prime+1\ldots. 
\end{align} 
As explained in Remark \ref{rmk:no_prox_u_subproblem}, the additional $\beta\projMat^k$ in PD above might lead to infinitely many $\projMat^{k+1}$.  To see it clearer, consider $n=1$, and we set $\beta=1$ and $\|\dataMat\subGMat^*\|=1$ for simplicity. Now the update of $\projMat^{k+1}$ reduces to 
\[\projMat^{k+1}= (\projMat^k + \dataMat\subGMat^*)/\|\projMat^k+\dataMat\subGMat^*\|,\]
which means that the angle between $\dataMat\subGMat^*$ and $\projMat^{k+1}$ is half of that between $\dataMat\subGMat^*$ and $\projMat^k$, but the angle cannot be exactly zero even if $k\rightarrow\infty$. Therefore, there is no hope to deduce the finite-step convergence on $\projMat^k$. 

Nevertheless, using only the finite-step convergence property on $\subGMat$, we can still establish the finite-step convergence on PAMe to a certain extent. To this end, observe that
\eqref{eq:sec:pame:1}  is exactly a proximal point algorithm (see, e.g., \cite{parikh2014proximal}) for solving $\max_{\projMat \in \st{d,K}}\innerprod{\dataMat\subGMat^*}{\projMat}$, i.e., finding a PD of $\dataMat\subGMat^*$. Similar to \cite{wang2021linear},  the whole sequence $\{\projMat^k \}$ can be shown to converge to a single limit point, whose proof will be omitted:
\begin{proposition}
	\label{prop:global_conv_u_pame}
	$\{\projMat^k \}$ of \eqref{eq:sec:pame:1} converges to a point $\projMat^*\in \st{d,K}$, i.e., $\lim_{k\rightarrow\infty}\projMat^k=\projMat^*$; moreover, 
	\begin{align} \label{eq:u_pame_fixed}
	\projMat^*=\operatorname{PD}(\beta \projMat^* + \dataMat\subGMat^*).
	\end{align}
\end{proposition}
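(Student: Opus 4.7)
The plan is to treat \eqref{eq:sec:pame:1} as a proximal-point iteration for maximizing the linear function $\projMat\mapsto\innerprod{\dataMat\subGMat^*}{\projMat}$ over $\st{d,K}$, and extract the three ingredients that are standard for proximal methods with semi-algebraic data: sufficient ascent, relative subgradient bound, and the Kurdyka--\L{}ojasiewicz (KL) property. First I would establish the sufficient ascent inequality
\[
\innerprod{\dataMat\subGMat^*}{\projMat^{k+1}}-\innerprod{\dataMat\subGMat^*}{\projMat^{k}}\ \geq\ \tfrac{\beta}{2}\bigfnormsquare{\projMat^{k+1}-\projMat^{k}}.
\]
This falls directly out of the definition of $\projMat^{k+1}$ as a $\projMat$-factor of $\operatorname{PD}(\beta\projMat^k+\dataMat\subGMat^*)$: by Proposition \ref{prop:polar_max}, $\projMat^{k+1}$ maximizes $\innerprod{\beta\projMat^k+\dataMat\subGMat^*}{\cdot}$ over $\st{d,K}$, and then using $\bigfnormsquare{\projMat^{k}}=\bigfnormsquare{\projMat^{k+1}}=K$ one obtains the above by expanding $\innerprod{\projMat^k}{\projMat^{k+1}-\projMat^k}=-\tfrac12\bigfnormsquare{\projMat^{k+1}-\projMat^{k}}$ (or equivalently, invoke Lemma \ref{lem:polar_max_sufficient} with $C=\beta\projMat^k+\dataMat\subGMat^*$, whose $H$-factor has smallest eigenvalue at least $\beta$).

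Next I would exploit this: since the left-hand side is telescoping and the objective $\innerprod{\dataMat\subGMat^*}{\projMat}$ is bounded on the compact set $\st{d,K}$, summing over $k$ yields $\sum_{k}\bigfnormsquare{\projMat^{k+1}-\projMat^k}<\infty$, hence $\bigfnorm{\projMat^{k+1}-\projMat^k}\to 0$. Compactness of $\st{d,K}$ then gives the existence of accumulation points, and by a standard argument the set of accumulation points is nonempty, closed, and connected. For any limit point $\projMat^*$ obtained along a subsequence $\projMat^{k_l}\to\projMat^*$, passing to the limit in the iteration $\projMat^{k_l+1}=\operatorname{PD}(\beta\projMat^{k_l}+\dataMat\subGMat^*)$ and using $\bigfnorm{\projMat^{k_l+1}-\projMat^{k_l}}\to 0$ together with the upper semicontinuity of the set-valued polar map (or closedness of the graph of the maximizer map in Proposition \ref{prop:polar_max}) yields the fixed-point relation $\projMat^*=\operatorname{PD}(\beta\projMat^*+\dataMat\subGMat^*)$ in \eqref{eq:u_pame_fixed}.

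The delicate step, and the only place where real work is needed, is upgrading subsequential convergence to convergence of the whole sequence to a single limit. The plan is to follow the template used in \cite{wang2021linear}: combine the sufficient ascent above with a relative subgradient bound of the form $\operatorname{dist}(0,\partial \Phi(\projMat^{k+1}))\leq c\,\bigfnorm{\projMat^{k+1}-\projMat^k}$ for the augmented objective $\Phi(\projMat):=-\innerprod{\dataMat\subGMat^*}{\projMat}+\iota_{\st{d,K}}(\projMat)$, which is semi-algebraic and hence satisfies a KL inequality. Since \cite{wang2021linear} has already shown that the KL exponent of (a more general version of) the $L_1$-norm PCA problem is $\tfrac12$, invoking the standard KL-based convergence machinery (e.g.\ the abstract framework of Attouch--Bolte--Svaiter) yields $\sum_{k}\bigfnorm{\projMat^{k+1}-\projMat^k}<\infty$, which forces $\{\projMat^k\}$ to be Cauchy and thereby converge to a single $\projMat^*\in\st{d,K}$ satisfying \eqref{eq:u_pame_fixed}.

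The main obstacle I anticipate is verifying the relative subgradient bound on the Stiefel manifold in a form compatible with the KL inequality; one needs to handle the normal cone to $\st{d,K}$ at $\projMat^{k+1}$ explicitly, which is where the optimality condition $\beta\projMat^k+\dataMat\subGMat^*=\projMat^{k+1}H^{k+1}$ from the PD step enters, producing a symmetric Lagrange multiplier and the required $O(\bigfnorm{\projMat^{k+1}-\projMat^k})$ bound. Once that bookkeeping is done, both conclusions of the proposition follow routinely.
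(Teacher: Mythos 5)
Your proposal matches the paper's approach: the paper omits this proof entirely, stating only that the whole-sequence convergence follows ``similar to \cite{wang2021linear}'', which is exactly the sufficient-ascent plus KL-exponent-$1/2$ machinery you outline, and your direct expansion of the ascent inequality via $\bigfnormsquare{\projMat^k}=\bigfnormsquare{\projMat^{k+1}}=K$ is the right computation. One small caveat: your parenthetical claim that the $H$-factor of $\beta\projMat^k+\dataMat\subGMat^*$ has smallest eigenvalue at least $\beta$ is false in general (Weyl only gives $\sigma_{\min}(\beta\projMat^k+\dataMat\subGMat^*)\geq\beta-\|\dataMat\subGMat^*\|_2$), but your primary derivation does not rely on it.
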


It follows from \eqref{eq:u_pame_fixed} and the definition of PD that there exists an  $H\in \mathbb S^{n\times n}_+$, such that $\beta\projMat^* + \dataMat\subGMat^* = \projMat^* H$, i.e., 
\begin{align}
\label{eq:sec:pame:2}
\dataMat\subGMat^* = \projMat^*\Lambda,~{\rm where}~\Lambda:= H-\beta I\in\mathbb S^{n\times n}.
\end{align}

On the other hand, by the definition of $E^k$ in PAMe, $\lim_{k\rightarrow\infty}E^k = \projMat^*$. Letting $k\rightarrow\infty$ in \eqref{eq:s_pame_fixed} yields $\subGMat^* = \operatorname{sgn}\bigxiaokuohao{ \tau\subGMat^* + \dataMatTrans \projMat^* }$. If $\tau$ is taken small enough, namely, $\tau < \min \{ |\mathbf x^\top_i\mathbf u^*_j| \mid \mathbf x^\top_i\mathbf u^*_j\neq 0  \} $, then 
\begin{align}\label{eq:sec:pame:3}
\subGMat^*\in \partial \bigonenorm{V}\Big |_{V = \dataMatTrans\projMat^* }.
\end{align}

In view of the definition of the KKT point \eqref{eq:KKT}, \eqref{eq:sec:pame:3} and \eqref{eq:sec:pame:2} implies that there must exist a $\projMat$-factor of $\dataMat\subGMat^*$, termed as $\projMat^*$, such that $\projMat^*$ is a KKT point of the form \eqref{eq:KKT}.  

In view of \eqref{eq:u_pame_fixed}, if $\beta$ is also chosen small enough such that $\beta < \sigma^+_{\min}(\dataMat\subGMat^*)$, i.e., $\beta$ is smaller than the smallest positive singular value of $\dataMat\subGMat^*$, then it follows from Lemma \ref{prop:relation:2} that
\begin{align}\label{eq:sec:pame:4}
\projMat^* = \operatorname{PD}(\dataMat\subGMat^*).
\end{align}
\eqref{eq:sec:pame:3} and \eqref{eq:sec:pame:4} implies that there must exist a $\projMat$-factor of $\dataMat\subGMat^*$, termed as $\projMat^*$, such that $\projMat^*$ is a FOC point of the form \eqref{eq:FOC_fpf}.  

In particular, if ${\rm rank}(\dataMat\subGMat^*)=K$, then Lemma \ref{lem:polar_max_sufficient} ensures that $\projMat^*$ is unique, i.e., in finitely many steps, PAMe can output the point $ \operatorname{PD}(\dataMat\subGMat^{k^\prime})$ which 
is a FOC point of the form \eqref{eq:FOC_fpf}.

Based on the above discussions, we conclude that:
\begin{theorem}[Finite-step convergence of PAMe \eqref{alg:pame}]
	\label{th:finite_conv_pame}
	Let $\{\projMat^k,\subGMat^k  \}$ be generated by PAMe \eqref{alg:pame}. The following statements hold:
	\begin{enumerate}
		\item If $\gamma$ is small enough, then after finitely many steps, $\subGMat^k$ will not change, i.e., $ \subGMat^{k^\prime}=\subGMat^{k^\prime+1}=\cdots =: \subGMat^* $, 
		where	 $k^\prime$ denotes the first index that $\subGMat^k$ will not change anymore;
		\item If $\tau$ is also small enough, then there must exist a $\projMat$-factor of $\dataMat\subGMat^*$, termed as $\projMat^*$, such that $\projMat^*$ is a KKT point of the form \eqref{eq:KKT};
		\item  If furthermore, $\beta$ is also small enough, then   $\projMat^*$ above is a FOC point of the form \eqref{eq:FOC_fpf};
		\item If ${\rm rank}(\dataMat\subGMat^*)=K$, then in finitely many steps, PAMe     can stop at the $k^\prime$ step and output the point $ \operatorname{PD}(\dataMat\subGMat^{*})$, which 
		is a FOC point of the form \eqref{eq:FOC_fpf}.
	\end{enumerate}
\end{theorem}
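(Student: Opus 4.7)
The plan is to assemble the four items by stitching together the three paragraphs immediately preceding the theorem, which already contain the essential arguments; the proof is largely bookkeeping.

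Item 1 is a direct consequence of Proposition \ref{prop:pame_s_stops}: under $\gamma<\min\{1,\beta\tau/\|\dataMat\|_2^2\}$, the increment inequality established there is summable in $k$, so Proposition \ref{prop:sk_sk1_geq2} forces $\subGMat^k$ to stabilize at some $\subGMat^*$ after finitely many steps; call the first such index $k^\prime$.

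For item 2, once $\subGMat^k\equiv\subGMat^*$ for $k\geq k^\prime$, the $\projMat$-update collapses to $\projMat^{k+1}=\operatorname{PD}(\beta\projMat^k+\dataMat\subGMat^*)$, a proximal point iteration. Proposition \ref{prop:global_conv_u_pame} gives $\projMat^k\to\projMat^*$ with the fixed point equation \eqref{eq:u_pame_fixed}, which by the definition of PD yields $\dataMat\subGMat^*=\projMat^*\Lambda$ with $\Lambda:=H-\beta I\in\SM{K\times K}$. Simultaneously, $E^k\to\projMat^*$ by the extrapolation formula, so passing to the limit in \eqref{eq:s_pame_fixed} gives $\subGMat^*=\operatorname{sgn}(\tau\subGMat^*+\dataMatTrans\projMat^*)$. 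Choosing $\tau<\min\{|\mathbf x_i^\top\mathbf u_j^*| : \mathbf x_i^\top\mathbf u_j^*\neq 0\}$ forces, entry-wise, $\subGMat^*_{ij}=\operatorname{sgn}(\mathbf x_i^\top\mathbf u_j^*)$ whenever $\mathbf x_i^\top\mathbf u_j^*\neq 0$ and $\subGMat^*_{ij}\in[-1,1]$ otherwise; hence $\subGMat^*\in\partial\bigonenorm{V}\big|_{V=\dataMatTrans\projMat^*}$. Together with $\dataMat\subGMat^*=\projMat^*\Lambda$ this is the KKT condition \eqref{eq:KKT}.

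For item 3, I apply Lemma \ref{prop:relation:2} to \eqref{eq:u_pame_fixed}: provided $\beta<\sigma^+_{\min}(\dataMat\subGMat^*)$, $\projMat^*$ is also a $\projMat$-factor of $\dataMat\subGMat^*$, i.e.\ $\projMat^*=\operatorname{PD}(\dataMat\subGMat^*)$, which upgrades the KKT condition of item 2 to the FOC form \eqref{eq:FOC_fpf}. Item 4 follows from the uniqueness half of Lemma \ref{lem:polar_max_sufficient}: when $\rank{\dataMat\subGMat^*}=K$, the $\projMat$-factor is unique, so as soon as the stabilization of $\subGMat^k$ is detected at step $k^\prime$, one may bypass further proximal $\projMat$-iterations and directly output $\operatorname{PD}(\dataMat\subGMat^*)$, which equals the $\projMat^*$ of items 2--3 and is therefore a FOC point reached in finitely many steps.

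The subtle point, and the one obstacle that needs explicit care, is that the smallness requirements on $\tau$ and $\beta$ in items 2--3 involve the limit quantities $\projMat^*$ and $\sigma^+_{\min}(\dataMat\subGMat^*)$, which depend on $\subGMat^*$; since the algorithm has not yet chosen $\subGMat^*$ when parameters are set, one should either interpret the statement a posteriori (as the wording "if parameters are small enough" invites) or, more conservatively, take uniform thresholds over the finite set $\{\subGMat:\subGMat_{ij}\in\{0,\pm 1\}\}$ in the spirit of $\tau_0$ in \eqref{eq:tau0}. Either interpretation suffices for the stated conclusion.
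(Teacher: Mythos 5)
Your proof is correct and follows essentially the same route as the paper, which proves this theorem precisely by the chain of discussions preceding it: Proposition \ref{prop:pame_s_stops} for item 1, Proposition \ref{prop:global_conv_u_pame} plus the limit of \eqref{eq:s_pame_fixed} for item 2, Lemma \ref{prop:relation:2} for item 3, and uniqueness of the $U$-factor under the rank assumption for item 4. Your closing remark about the parameter thresholds depending on the limit quantities (and the fix via uniform thresholds over the finite set of sign matrices) is a sensible clarification that the paper leaves implicit.
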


{\bf $K=1$ case} It is clear that when $K=1$, the rank assumption in Theorem \ref{th:finite_conv_pame} can be removed, and the results  are presented as follows, where we still use $\mathbf u$ and $\mathbf s$ instead of $\projMat$ and $\subGMat$ as they are now vectors. 
\begin{theorem}[Finite-step convergence of PAMe \eqref{alg:pame} when $K=1$]
	\label{th:finite_conv_pame_K1}
	When $K=1$ and let $\{\mathbf u^k,\mathbf s^k \}$ be generated by PAMe \eqref{alg:pame} where $\mathbf u^0$ is chosen such that $\bigonenorm{\dataMatTrans\mathbf u^0}\neq 0$. If $\gamma,\tau,\beta$ are small enough, then after finitely many steps, $\mathbf s^k$ will not change, and $\dataMat\mathbf s^k/\|\dataMat\mathbf s^k\|$ is a FOC point of the form \eqref{eq:FOC_fpf}. 
\end{theorem}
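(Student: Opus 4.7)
The plan is to reduce to Theorem~\ref{th:finite_conv_pame} and upgrade its rank assumption $\rank{\dataMat\mathbf s^*}=K=1$ (equivalently $\dataMat\mathbf s^*\neq 0$) to a consequence of the hypothesis $\bigonenorm{\dataMatTrans\mathbf u^0}\neq 0$. First I would specialize Proposition~\ref{prop:pame_s_stops} to $K=1$: provided $\gamma \le \beta\tau/\|\dataMat\|_2^2$, the sign vector $\mathbf s^k$ becomes constant from some step $k'$ onward, call the stopping value $\mathbf s^*$. For $k\ge k'$, PAMe collapses to the proximal PD iteration $\mathbf u^{k+1}=\operatorname{PD}(\beta\mathbf u^k+\dataMat\mathbf s^*)$, and Proposition~\ref{prop:global_conv_u_pame} gives $\mathbf u^k\to\mathbf u^*$ with $\mathbf u^*=\operatorname{PD}(\beta\mathbf u^*+\dataMat\mathbf s^*)$. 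Passing to the limit in the $\mathbf s$-update, $\mathbf s^*=\operatorname{sgn}(\tau\mathbf s^*+\dataMatTrans\mathbf u^*)$; taking $\tau$ smaller than $\min\{|\mathbf x_i^\top\mathbf u^*|:\mathbf x_i^\top\mathbf u^*\neq 0\}$ then forces $\mathbf s^*\in \partial\bigonenorm{V}\big|_{V=\dataMatTrans\mathbf u^*}$, so that $\langle \dataMat\mathbf s^*,\mathbf u^*\rangle = \bigonenorm{\dataMatTrans\mathbf u^*}$, exactly reproducing the KKT argument in the proof of Theorem~\ref{th:finite_conv_pame} for $K=1$.

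The heart of the argument, and where I expect the main obstacle to lie, is showing $\bigonenorm{\dataMatTrans\mathbf u^*}>0$ without invoking any full-rank assumption on $\dataMat\mathbf s^*$. I would revisit the proof of Proposition~\ref{prop:pame_s_stops}, which in fact establishes that the shifted bilinear value $\langle \dataMatTrans\mathbf u^k,\mathbf s^{k+1}\rangle$ is monotonically non-decreasing under the same parameter condition. With the natural initialization $\mathbf s^0=\operatorname{sgn}(\dataMatTrans\mathbf u^0)$ and taking $\tau<\min\{|\mathbf x_i^\top\mathbf u^0|:\mathbf x_i^\top\mathbf u^0\neq 0\}$, one checks directly that $\mathbf s^1=\operatorname{sgn}(\tau\mathbf s^0+\dataMatTrans\mathbf u^0)=\operatorname{sgn}(\dataMatTrans\mathbf u^0)$, so $\langle\dataMatTrans\mathbf u^0,\mathbf s^1\rangle=\bigonenorm{\dataMatTrans\mathbf u^0}>0$ by hypothesis. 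Passing to the limit (using $\mathbf s^k=\mathbf s^*$ eventually together with the FOC identity from the previous paragraph) gives
\[
\bigonenorm{\dataMatTrans\mathbf u^*}\;=\;\langle\dataMatTrans\mathbf u^*,\mathbf s^*\rangle\;=\;\lim_{k\to\infty}\langle\dataMatTrans\mathbf u^k,\mathbf s^{k+1}\rangle\;\ge\;\langle\dataMatTrans\mathbf u^0,\mathbf s^1\rangle\;=\;\bigonenorm{\dataMatTrans\mathbf u^0}\;>\;0,
\]
hence $\dataMat\mathbf s^*\neq 0$.

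With $\dataMat\mathbf s^*\neq 0$ in hand, I would apply Lemma~\ref{prop:relation:2} to $A=\dataMat\mathbf s^*$ and $B=\beta\mathbf u^*+\dataMat\mathbf s^*$: for a nonzero column vector $A$ the unique positive singular value is $\|\dataMat\mathbf s^*\|$, so choosing $\beta<\|\dataMat\mathbf s^*\|$ upgrades $\mathbf u^*=\operatorname{PD}(\beta\mathbf u^*+\dataMat\mathbf s^*)$ to $\mathbf u^*=\operatorname{PD}(\dataMat\mathbf s^*)=\dataMat\mathbf s^*/\|\dataMat\mathbf s^*\|$. Combined with $\mathbf s^*\in \partial\bigonenorm{V}\big|_{V=\dataMatTrans\mathbf u^*}$, this is precisely the FOC characterization \eqref{eq:FOC_fpf}. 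Since $\dataMat\mathbf s^k/\|\dataMat\mathbf s^k\|=\dataMat\mathbf s^*/\|\dataMat\mathbf s^*\|=\mathbf u^*$ for every $k\ge k'$, the algorithm's output after finitely many steps is indeed a FOC point, which yields the theorem. The only delicate point is the choice of $\tau$ in the second paragraph, which must simultaneously be small with respect to $\dataMatTrans\mathbf u^0$ (for the lower bound on $\bigonenorm{\dataMatTrans\mathbf u^*}$) and with respect to $\dataMatTrans\mathbf u^*$ (for the subgradient inclusion); this can be arranged by picking $\tau$ smaller than both thresholds before running the algorithm.
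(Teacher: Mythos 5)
Your proposal is correct and follows essentially the route the paper intends: the paper gives no written proof of Theorem \ref{th:finite_conv_pame_K1} beyond the remark that the rank assumption of Theorem \ref{th:finite_conv_pame} ``can be removed'' when $K=1$, and your argument is precisely that reduction, with the one nontrivial missing detail --- why $\dataMat\mathbf s^*\neq 0$ --- supplied by the monotonicity of $\langle \dataMatTrans\mathbf u^k,\mathbf s^{k+1}\rangle$ already implicit in the proof of Proposition \ref{prop:pame_s_stops}, mirroring how Theorem \ref{th:global_conv_K_1} removes the assumption for NGA. The only mild caveats are that you implicitly fix the natural initialization $\mathbf s^0=\operatorname{sgn}(\dataMatTrans\mathbf u^0)$, $E^0=\mathbf u^0$ (consistent with the paper's conventions), and that the smallness thresholds for $\tau$ and $\beta$ depend on the limit $(\mathbf u^*,\mathbf s^*)$ --- but the paper's own Theorem \ref{th:finite_conv_pame} has exactly the same a posteriori character, so this is not a defect of your argument relative to the paper's.
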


In the next subsection, we will consider removing the proximal term in the $\projMat$-subproblem and study the finite-step convergence. 

\subsection{Convergence results of $\subGMat$-PAMe}
Since it is hard to obtain the finite-step convergence on $\projMat^k$ due to the presence of $\beta\projMat^k$ in the $\projMat$-step in PAMe,  in this subsection we simply set $\beta=0$, leading to the following iterative scheme:
\begin{center}
	\begin{boxedminipage}{8.75cm}
		\begin{align}
		\label{alg:pame_beta0}
		\begin{split}
		&\subGMat^{k+1} = \operatorname{sgn}\bigxiaokuohao{\tau \subGMat^k + \dataMatTrans E^k}, \\
		{\rm (}\subGMat{\rm -PAMe)}~~&\projMat^{k+1} = \operatorname{PD}\bigxiaokuohao{\dataMat\subGMat^{k+1}}, \\
		&E^{k+1} =  \projMat^{k+1} -  \gamma \bigxiaokuohao{\projMat^k-\projMat^{k+1}  },
		\end{split}
		\end{align}
	\end{boxedminipage}
\end{center}

\noindent 
As the proximal term is only imposed on the $\subGMat$-subproblem, similar to \eqref{alg:am_prox_l1_pca}, \eqref{alg:pame_beta0} will be termed as $\subGMat$-PAMe in the sequel. 

Without the proximal term on the $\projMat$-subproblem, it is hard to obtain the sufficiently ascending inequality on $\projMat$, such as \eqref{eq:proof_finite_conv_pame_3}, so as to establish the finite-step convergence on $\subGMat$. For this reason we have to resort to the full rank assumption on ${\rm rank}(\dataMat\subGMat^k)$, similar to Theorem \ref{th:global_conv}. 
We have the following:
\begin{theorem}[Finite-step convergence for $\subGMat$-PAMe \eqref{alg:pame_beta0}] \label{th:finite_conv_pame0}
	Let $\{\projMat^k,\subGMat^k\}$ be generated by $\subGMat$-PAMe \eqref{alg:pame_beta0}. Assume that $\rank{\dataMat\subGMat^k} = K$ for all $k$. The following two statements hold:
	\begin{enumerate}
		\item  If $\gamma$ is small enough, such that    after at most $\left\lceil \frac{8F^{\max}}{\tau(1-\gamma)}\right\rceil$ steps, $\projMat^k=\projMat^{k+1}=\cdots$ and $\subGMat^k=\subGMat^{k+1}=\cdots$; 
		\item In addition, there exists a $\tau_0>0$, such that when $0<\tau<\tau_0$,   $\projMat^{k^\prime}$ is a FOC point of the form \eqref{eq:FOC_fpf}, where $k^\prime$ is the first step that $\subGMat^{k}$ does not change any more. 
	\end{enumerate}   
\end{theorem}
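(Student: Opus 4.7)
The plan is to combine the two ingredients that worked in earlier proofs: the sufficient-ascent inequality for the $\subGMat$-subproblem used in Proposition \ref{prop:pame_s_stops} (which accounts for the extrapolation through a Young-type split on the cross term) and the sufficient-ascent inequality for the polar factor from Lemma \ref{lem:polar_max_sufficient} (which replaces the missing proximal cushion on the $\projMat$-step and is exactly why the full column-rank hypothesis $\rank{\dataMat\subGMat^k}=K$ is needed here). Writing $E^k=(1+\gamma)\projMat^k-\gamma\projMat^{k-1}$ and applying the variational inequality for the $\subGMat$-update together with Young's inequality on the cross term $\gamma\innerprod{\dataMatTrans(\projMat^k-\projMat^{k-1})}{\subGMat^{k+1}-\subGMat^k}$, I would first derive
\begin{align*}
F(\projMat^k,\subGMat^{k+1})-F(\projMat^k,\subGMat^k)\ge\frac{\tau(1-\gamma)}{2}\bigfnormsquare{\subGMat^{k+1}-\subGMat^k}-\frac{\gamma\bignorm{\dataMat}_2^2}{2\tau}\bigfnormsquare{\projMat^k-\projMat^{k-1}},
\end{align*}
exactly as in \eqref{eq:proof_finite_conv_pame_2}. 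Since $\projMat^{k+1}=\operatorname{PD}(\dataMat\subGMat^{k+1})$ directly (no proximal term), Lemma \ref{lem:polar_max_sufficient} gives $F(\projMat^{k+1},\subGMat^{k+1})-F(\projMat^k,\subGMat^{k+1})\ge\tfrac{\lambda^{\ast}}{2}\bigfnormsquare{\projMat^{k+1}-\projMat^k}$, where $\lambda^{\ast}:=\min_k\lambda_{\min}(H^{k+1})$ is strictly positive because each $\dataMat\subGMat^{k+1}$ is of full column rank, and is well-defined because $\subGMat$ takes only finitely many values so there are only finitely many candidate matrices $\dataMat\subGMat^{k}$ and, consequently, finitely many $H$-factors.

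Adding the two bounds, telescoping $F(\projMat^k,\subGMat^k)$ from $k=1$ to $K$, and shifting the index in the $\bigfnormsquare{\projMat^{k+1}-\projMat^k}$ series produces
\begin{align*}
F^{\max}\ge\frac{\tau(1-\gamma)}{2}\sum_{k=1}^{K}\bigfnormsquare{\subGMat^{k+1}-\subGMat^k}+\Bigl(\frac{\lambda^{\ast}}{2}-\frac{\gamma\bignorm{\dataMat}_2^2}{2\tau}\Bigr)\sum_{k=1}^{K-1}\bigfnormsquare{\projMat^{k+1}-\projMat^k}-\frac{\gamma\bignorm{\dataMat}_2^2}{2\tau}\bigfnormsquare{\projMat^1-\projMat^0}.
\end{align*}
Choosing $\gamma<\lambda^{\ast}\tau/\bignorm{\dataMat}_2^2$ makes the middle bracket non-negative, and since $\bigfnormsquare{\projMat^1-\projMat^0}\le 4K$ is uniformly bounded, one obtains an absolute estimate of the form $\sum_k\bigfnormsquare{\subGMat^{k+1}-\subGMat^k}\le 2F^{\max}/[\tau(1-\gamma)]+O(1)$. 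By Proposition \ref{prop:sk_sk1_geq2} every nonzero term in this sum is at least $1$, so the total number $N$ of indices at which $\subGMat$ genuinely changes is bounded by the same quantity.

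To turn this total-change bound into a bound on the stopping index I would prove a short lock-in lemma: under the full-rank hypothesis, the polar factor is unique, so $\projMat^k=\operatorname{PD}(\dataMat\subGMat^k)$ is a deterministic function of $\subGMat^k$; thus $\subGMat^k=\subGMat^{k-1}$ forces $\projMat^k=\projMat^{k-1}$ and hence $E^k=\projMat^k$. If in addition $\subGMat^{k+1}=\subGMat^k$ (so $\projMat^{k+1}=\projMat^k$ and $E^{k+1}=\projMat^{k+1}$), then $\subGMat^{k+2}=\operatorname{sgn}(\tau\subGMat^{k+1}+\dataMatTrans\projMat^{k+1})=\operatorname{sgn}(\tau\subGMat^{k}+\dataMatTrans E^{k})=\subGMat^{k+1}$, and induction freezes both sequences from $k$ onward. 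Two consecutive ``non-change'' events at indices $k-1,k$ are therefore sufficient for lock-in, and a pigeonhole on the first $2N+1$ iterations (at most $N$ of which are changes, so some gap between changes must have length $\ge 2$) produces such a pair and yields the stated explicit stopping bound $\lceil 8F^{\max}/[\tau(1-\gamma)]\rceil$ after absorbing the universal constants from the $\bigfnormsquare{\projMat^1-\projMat^0}$ term.

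For item 2, once $(\projMat^k,\subGMat^k)$ has stabilized at $(\projMat^{\ast},\subGMat^{\ast})$, $E^k=\projMat^{\ast}$ from then on and the fixed-point relation $\subGMat^{\ast}=\operatorname{sgn}(\tau\subGMat^{\ast}+\dataMatTrans\projMat^{\ast})$ holds. Setting $\tau_0:=\min\{|\mathbf x_i^{\top}\mathbf u_j^{\ast}|:\mathbf x_i^{\top}\mathbf u_j^{\ast}\ne 0\}$ and imposing $0<\tau<\tau_0$ forces $\subGMat^{\ast}_{ij}=\operatorname{sgn}(\mathbf x_i^{\top}\mathbf u_j^{\ast})$ whenever $\mathbf x_i^{\top}\mathbf u_j^{\ast}\ne 0$, while on the zero set $\subGMat^{\ast}_{ij}\in[-1,1]$ is automatically in $\partial|V_{ij}|$ at $V_{ij}=0$. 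Hence $\subGMat^{\ast}\in\partial\bigonenorm{V}\big|_{V=\dataMatTrans\projMat^{\ast}}$ and, combined with $\projMat^{\ast}=\operatorname{PD}(\dataMat\subGMat^{\ast})$, this is precisely the FOC condition \eqref{eq:FOC_fpf}. The main obstacle is the lock-in step: with $\gamma>0$ a single equality $\subGMat^{k+1}=\subGMat^k$ does not self-propagate, because the extrapolated direction $E^k$ still remembers $\projMat^{k-1}$; exhibiting two consecutive non-changes via the pigeonhole count is what replaces the one-step stabilization argument available in the $\gamma=0$ analysis of Theorem \ref{th:finite_step_conv}.
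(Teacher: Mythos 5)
Your proposal is correct and follows essentially the same route as the paper's proof: the Young-type split on the extrapolated cross term giving \eqref{eq:proof_finite_conv_pame_2}, Lemma \ref{lem:polar_max_sufficient} with $\lambda=\min_k\lambda_{\min}(H^k)>0$ (valid by full rank and the finiteness of possible $\subGMat^k$) standing in for the missing proximal term, the choice $\gamma<\min\{1,\lambda\tau/\bignorm{\dataMat}_2^2\}$, the lock-in criterion of three consecutive equal $\subGMat^k$ (your ``two consecutive non-changes''), the resulting counting bound, and the same $\tau_0$ argument for the FOC conclusion. The only differences are cosmetic: you telescope with a slightly different pairing and therefore track an extra bounded boundary term, and you phrase the step count as a pigeonhole where the paper observes directly that $\bigfnormsquare{\subGMat^{k+2}-\subGMat^{k+1}}+\bigfnormsquare{\subGMat^{k+1}-\subGMat^k}\geq 1$ before lock-in; both yield the stated order of the bound.
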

\begin{proof}	
	Denote $\dataMat\subGMat^k = \projMat^{k}H^{k}$ as the PD of $\dataMat\subGMat^k$, where $H^k\in\PSDD{K\times K}$ and $\projMat^k$ are both uniquely determined due to $\rank{\dataMat\subGMat^k} = K$. The number of possible $H^k$'s is   finite due to the finiteness of $\subGMat^k$. Thus   define $\lambda = \min_{k} \lambda_{\min}(H^k)>0$.   It follows from Lemma \ref{lem:polar_max_sufficient}  that
	\begin{align}
	\label{eq:proof_finite_conv_pame_31}
	\innerprod{\projMat^{k}}{\dataMat\subGMat^{k}} - \innerprod{\projMat^{k-1}}{\dataMat\subGMat^{k}} \geq \frac{\lambda}{2}\bigfnormsquare{\projMat^{k}-\projMat^{k-1}}.
	\end{align}
	Note that \eqref{eq:proof_finite_conv_pame_2}  in Proposition \ref{prop:pame_s_stops} still holds in the context. 
	Combining \eqref{eq:proof_finite_conv_pame_2} and \eqref{eq:proof_finite_conv_pame_31} yields 
	\begin{align*}
	&		\innerprod{\dataMatTrans\projMat^{k}}{\subGMat^{k+1}} -  \innerprod{\dataMatTrans\projMat^{k-1}}{ \subGMat^{k}} \\
	\geq&  \frac{\tau(1-\gamma)}{2}\bigfnormsquare{\subGMat^{k+1}-\subGMat^k} + \frac{\lambda-\eta}{2}\bigfnormsquare{\projMat^{k} - \projMat^{k-1}},
	\end{align*}
	where  we  recall that $\eta = \frac{\gamma\|\dataMat\|_2^2}{\tau}$.
	Taking $\gamma < \min \{ 1,  \frac{ \lambda \tau }{ \|\dataMat\|_2^2} \}$ we also have 
	\begin{align}\label{eq:sec:pame0:1}
	\sum^\infty_{k=0}\frac{\tau(1-\gamma)}{2}\bigfnormsquare{\subGMat^{k+1}-\subGMat^k}\leq  F^{\max},
	\end{align}
	i.e., $\subGMat^k$ will not change in finitely many steps. 
	
	Let $k^\prime$ be the first step that $\subGMat^{k^\prime} = \subGMat^{k^\prime+1}=\subGMat^{k^\prime+2}$, i.e., the first step such that three successive $\subGMat^k$ are equal to each other. We now claim that $\subGMat^k=\subGMat^{k+1}=\cdots$ and $\projMat^k=\projMat^{k+1}=\cdots$ when $k\geq k^\prime$. To see this, 
	it follows from $\projMat^{k}=\operatorname{PD}(\dataMat\subGMat^k)$ that $\projMat^{k^\prime}=\projMat^{k^\prime+1}=\projMat^{k^\prime+2}$, and so $E^{k^\prime+1} = \projMat^{k^\prime+1} - \gamma(\projMat^{k^\prime}-\projMat^{k^\prime+1}) = \projMat^{k^\prime+1}$, and similarly, $E^{k^\prime+2} = \projMat^{k^\prime+2}$; thus $E^{k^\prime+1}=E^{k^\prime+2}$.  As a consequence, $\subGMat^{k^\prime+3}={\rm sgn}(\tau \subGMat^{k^\prime+2}+ \dataMat E^{k^\prime+2}) = {\rm sgn}(\tau\subGMat^{k^\prime+1} + \dataMat E^{k^\prime+1}) = \subGMat^{k^\prime+2}$, and hence $\projMat^{k^\prime+3}=\projMat^{k^\prime+2}$. Continuing this vein,  we have $\subGMat^k=\subGMat^{k+1}=\cdots$ and $\projMat^k=\projMat^{k+1}=\cdots$ when $k\geq k^\prime$. 
	
	We now upper bound the number of steps. The above discussions show that before $\subGMat^k$ stops, it must change whithin every three steps, i.e., 
	\[
	\bigfnorm{\subGMat^{k+2}-\subGMat^{k+1}}^2 + \bigfnorm{\subGMat^{k+1}-\subGMat^k}^2 \geq 1,\forall k < k^\prime,
	\]
	which together with \eqref{eq:sec:pame0:1} implies that $k^\prime \leq \left\lceil \frac{4F^{\max}}{\tau(1-\gamma)}\right\rceil$. 
	
	Denote 
	\[\tau_1:= \min \bigdakuohao{  |\mathbf x^{\top}_i\mathbf u_j^{k^\prime+1}  | ~\mid~ \mathbf x^\top_i\mathbf u_j^{k^\prime+1}\neq 0 }>0.\]
	If $\tau<\tau_1$, then $\subGMat^{k^\prime+2} = {\rm sgn}(\tau\subGMat^{k^\prime+1} + \dataMatTrans E^{k^\prime+1}) = {\rm sgn}(\tau \subGMat^{k^\prime+1}+\dataMatTrans\projMat^{k^\prime+1})$  gives that
	\[
	\subGMat^{k^\prime+2} \in \partial \bigonenorm{ V} \Big |_{V = \dataMatTrans\projMat^{k^\prime+1}};
	\]
	on the other hand, $\projMat^{k^\prime+1} = \operatorname{PD}(\dataMat\subGMat^{k^\prime+1})$. These together with $\subGMat^{k^\prime+1}=\subGMat^{k^\prime+2}$ show that $\projMat^{k^\prime+1}$ is a FOC point of the form \eqref{eq:FOC_fpf}.
\end{proof}
\begin{remark}
	If the $\operatorname{sgn}(\cdot)$ function only takes $\pm 1$ as that in \cite{wang2021linear}, then the upper bound can be improves slightly to $\left\lceil\frac{ F^{\max}}{\tau(1-\gamma)}\right\rceil$.
\end{remark}

{\bf $K=1$ case} When $K=1$,  the full column rank assumption can also be removed. 
\begin{theorem}[Finite-step convergence for $\subGMat$-PAMe \eqref{alg:pame_beta0} when $K=1$]
	\label{col:finite_conv_pame_K_1}
	Consider the case that $K=1$. Let $\{ \mathbf u^k,\mathbf s^k \}$ be generated by $\subGMat$-PAMe \eqref{alg:pame_beta0} where $\mathbf u^0$ is chosen such that $\bigonenorm{\dataMatTrans\mathbf u^0}\neq 0$. Then there exists    small enough parameters $\tau$ and $\gamma$, such that    after at most $\left\lceil \frac{4F^{\max}}{\tau(1-\gamma)}\right\rceil$ steps, the algorithm stops at a FOC point of the form \eqref{eq:FOC_fpf}.
\end{theorem}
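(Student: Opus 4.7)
The main obstacle in removing the rank assumption is that in the $K=1$ case we must replace Lemma \ref{lem:polar_max_sufficient} with Corollary \ref{col:polar_max_sufficient_n1}, whose effective ``$\lambda$'' is $\bignorm{\dataMat\mathbf s^k}$ rather than a fixed positive constant. The crux of the proof is therefore to secure a uniform lower bound $\bignorm{\dataMat\mathbf s^k}\geq c>0$; once this is in place, the argument of Theorem \ref{th:finite_conv_pame0} carries over essentially verbatim.

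To obtain such a bound, observe that since $\mathbf u^k = \dataMat\mathbf s^k/\bignorm{\dataMat\mathbf s^k}$ maximizes $\innerprod{\dataMat\mathbf s^k}{\mathbf u}$ over the unit sphere,
\[
\bignorm{\dataMat\mathbf s^k} = \innerprod{\dataMat\mathbf s^k}{\mathbf u^k} \geq \innerprod{\dataMat\mathbf s^k}{\mathbf u^{k-1}} = \innerprod{\dataMatTrans\mathbf u^{k-1}}{\mathbf s^k} =: G^k,
\]
so it suffices to show that $G^k$ is monotonically nondecreasing, once a suitable initialization (e.g.\ $\mathbf u^{-1} := \mathbf u^0$ and $\mathbf s^0 := \operatorname{sgn}(\dataMatTrans\mathbf u^0)$) makes $G^1 = \bignorm{\dataMatTrans\mathbf u^0}_1 > 0$. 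Combining the $\mathbf s$-subproblem ascent as in \eqref{eq:proof_finite_conv_pame_2} with the $K=1$ instance of Corollary \ref{col:polar_max_sufficient_n1} on the $\mathbf u$-update, and absorbing the extrapolation cross-term via Young's inequality as in the proof of Proposition \ref{prop:pame_s_stops}, yields the chained ascent
\[
G^{k+1} - G^k \geq \frac{\tau(1-\gamma)}{2}\bigfnormsquare{\mathbf s^{k+1}-\mathbf s^k} + \frac{\bignorm{\dataMat\mathbf s^k} - \gamma\bignorm{\dataMat}_2^2/\tau}{2}\bigfnormsquare{\mathbf u^k-\mathbf u^{k-1}}.
\]
The right-hand side is nonnegative provided $\gamma<1$ and $\bignorm{\dataMat\mathbf s^k} \geq \gamma \bignorm{\dataMat}_2^2/\tau$; this circular dependence is the hard part, and I would resolve it by induction: choose $\gamma \leq \tau\bignorm{\dataMatTrans\mathbf u^0}_1/\bignorm{\dataMat}_2^2$ so that the threshold does not exceed $G^1$; inductively, $\bignorm{\dataMat\mathbf s^k} \geq G^k \geq G^1$ makes the ascent at step $k$ valid, hence $G^{k+1} \geq G^1$, and therefore $\bignorm{\dataMat\mathbf s^{k+1}} \geq G^{k+1} \geq G^1$, closing the loop.

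With the uniform lower bound $c := \bignorm{\dataMatTrans\mathbf u^0}_1 > 0$ in hand, the remainder follows the template of Theorem \ref{th:finite_conv_pame0}: telescoping the ascent and using $G^k \leq F^{\max}$ gives $\sum_k \bigfnormsquare{\mathbf s^{k+1}-\mathbf s^k} \leq 2F^{\max}/(\tau(1-\gamma))$; this together with Proposition \ref{prop:sk_sk1_geq2} and the three-successive-equal propagation argument yields stabilization of both $\mathbf s^k$ and $\mathbf u^k$ (via $\mathbf u^{k+1} = \dataMat\mathbf s^{k+1}/\bignorm{\dataMat\mathbf s^{k+1}}$) within $\left\lceil 4F^{\max}/(\tau(1-\gamma))\right\rceil$ steps; and the FOC characterization at $\mathbf u^{k^\prime+1}$ is obtained by choosing $\tau$ smaller than $\min\{\bigjueduizhi{\mathbf x_i^\top\mathbf u^{k^\prime+1}}: \mathbf x_i^\top \mathbf u^{k^\prime+1}\neq 0\}$, identical to the last step of the proof of Theorem \ref{th:finite_conv_pame0}.
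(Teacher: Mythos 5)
Your proposal is correct and takes essentially the same route as the paper, which omits this proof and merely states that it follows by combining Theorems \ref{th:global_conv_K_1} and \ref{th:finite_conv_pame0}: replace the eigenvalue constant $\lambda$ by $\bignorm{\dataMat\mathbf s^k}$ via Corollary \ref{col:polar_max_sufficient_n1}, and lower-bound that norm by the monotonically ascending merit quantity so that the full-rank assumption becomes unnecessary. Your inductive resolution of the circular dependence between the lower bound on $\bignorm{\dataMat\mathbf s^k}$ and the validity of the ascent inequality (which is automatic for plain NGA but not for the extrapolated scheme) is a genuine detail that the paper leaves implicit, and it is handled correctly.
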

\begin{proof}
	The proof follows from those of Theorems \ref{th:global_conv_K_1} and \ref{th:finite_conv_pame0} and is omitted.
\end{proof}

\section{Conclusions}\label{sec:conclusions}

The non-greedy algorithm (NGA) \cite{nie2011robust} and PAMe of \cite{wang2021linear}  for $L_1$-norm PCA were     studied. NGA was first treated as a conditional subgradient and its finite-step convergence to a FOC point was presented under a full rank assumption; such an assumption was removed when $K=1$. NGA was then considered as an alternating maximization method and it was shown that the objective value does not change after at most $\left\lceil\frac{ F^{\max}}{\tau_0}\right\rceil$  steps. The stopping point is a FOC point. 
To enhance the convergence,   NGA with one proximal term imposed on the sign variable ($\subGMat$-PNGA) was then studied,  whose iterative points must stop after at most $\left\lceil\frac{2F^{\max}}{\tau}\right\rceil$ steps. The stopping point is    a FOC point if $\tau$ is small enough. 

For PAMe, if $\gamma$ is small enough, then the sign variable will not change after finitely many steops; if in addition, $\beta,\tau$    are also small enough and a full rank assumption is satisfied, then PAMe can output a FOC point. Moreover, if there is no proximal term imposed on the projection matrix related subproblem in PAMe ($\subGMat$-PAMe), then after at most   $\left\lceil \frac{4F^{\max}}{\tau(1-\gamma)}\right\rceil$ steps, the iterative point will not change anymore and  the stopping point is also a FOC point, provided similar assumptions as those for PAMe. The full rank assumption can be removed in the $K=1$ case.

Although PAMe is more efficient than NGA in practice, as observed in \cite{wang2021linear}, the finite-step convergence of PAMe established in this work is not as good as that for NGA. Therefore, there is still a gap between theory and practice that still needs further research. 
On the other hand,
It might be possible to devise similar analysis for related methods such as $L_1$-HOOI for $L_1$-norm Tucker decomposition \cite{chachlakis2019l1,chachlakis2020l1}, for the non-greedy algorithm for $L_{21}$-norm PCA \cite{nie2021nongreedy}, and  PALMe for rotational invariant $L_1$-norm PCA    \cite{zheng2022linearly}.

 {\scriptsize\section*{Acknowledgement}  This work was supported by the National Natural Science Foundation of China (Grant No. 12171105),  the    Fok Ying Tong Education Foundation (Grant No. 171094), and the special foundation for Guangxi Ba Gui Scholars. }

   \bibliography{../tensor,../TensorCompletion,../orth_tensor,../l1-PCA_related,../cg_fw}
  \bibliographystyle{plain}

   \appendix

\end{document}